\numberwithin{equation}{section}
\newtheorem{theorem}{Theorem}[section]
\newtheorem{cor}[theorem]{Corollary}
\newtheorem{proposition}[theorem]{Proposition}
\newtheorem{lemma}[theorem]{Lemma}
\newtheorem{prop}[theorem]{Proposition}
\theoremstyle{definition}
\newtheorem{defi}{Definition}[section]
\newtheorem{rem}[defi]{Remark}
\newtheorem{example}[defi]{Example}
\newcommand\half{\frac{1}{2}}
\newcommand\be{\beta}
\newcommand\g{\mathfrak g}
\newcommand\h{\mathfrak h}
\newcommand\D{\Delta}
\renewcommand\l{\lambda}
\renewcommand\d{\delta}
\newcommand \Dim{\rm Dim}
\renewcommand\a{\alpha}
\renewcommand\aa{\mathfrak a}
\newcommand{\Z}{\mathbb Z}
\renewcommand\th{\theta}
\newcommand\ganz{\mathbb Z}
\renewcommand\aa{\mathfrak a}
\newcommand\C{\mathbb C}
\newcommand\R{\mathbb R}
\newcommand{\sdim}{\text{\rm sdim}}
\newcommand{\vac}{{\bf 1}}
\newcommand{\bea}{\begin{eqnarray}}
\newcommand{\eea}{\end{eqnarray}}
\begin{document}
\title[Conformal embeddings in  $W$-algebras I]{Conformal embeddings of affine vertex algebras in minimal $W$-algebras I:   structural results}
\author[Adamovi\'c, Kac, M\"oseneder, Papi, Per\v{s}e]{Dra{\v z}en~Adamovi\'c}
\author[]{Victor~G. Kac}
\author[]{Pierluigi~M\"oseneder Frajria}
\author[]{Paolo~Papi}
\author[]{Ozren~Per\v{s}e}
\begin{abstract} We find all values of $k\in \C$, for which the embedding of the maximal affine vertex algebra in a simple 
minimal W-algebra $W_k(\g,\theta)$ is conformal, where $\g$ is a basic simple Lie superalgebra
and $-\theta$ its minimal root.
In particular, it turns out that if $W_k(\g,\theta)$ does not collapse to its affine part, 
then the possible values of these $k$ are either $-\frac{2}{3}Êh^\vee$ or $-\frac{h^\vee-1}{2}$, where 
$h^\vee$ is the dual Coxeter number of $\g$ for the normalization 
$(\theta,\theta)=2$.  As an application of our results, we present a realization of simple affine vertex algebra $V_{-\tfrac{n+1}{2} } (sl(n+1))$ inside of the tensor product of  the vertex algebra $W_{\tfrac{n-1}{2}} (sl(2\vert n), \theta)$  (also called the Bershadsky-Knizhnik algebra) with a lattice vertex algebra.
\end{abstract}
\keywords{vertex algebra, Virasoro (=conformal) vector, conformal embedding, conformal level, collapsing level}
\subjclass[2010]{Primary    17B69; Secondary 17B20, 17B65}
\maketitle
\centerline{\it To Efim Zelmanov, on the occasion of his 60th birthday}
\section{Introduction}\label{zero}
Let $V$ be a vertex algebra with a Virasoro (= conformal)  vector $\omega_V$ and let $W$ be a vertex subalgebra of $V$ endowed with a Virasoro vector  $\omega_W$. The embedding $W\subset V$ is called {\it conformal} if $\omega_W=\omega_V$ \cite{AKMPP}.\par
 This notion is a generalization of the notion of conformal embedding of a reductive Lie algebra $\g^0$ in a simple Lie algebra $\g$, studied extensively in the mathematics and physics literature in the 1980's, due to its relevance to string compactifications.
Namely, the embedding $\g^0\subset \g$ was called conformal,  if there exists a nontrivial vacuum integrable highest weight module over the affine Lie algebra $\widehat \g$,  such that the Sugawara constructions of  the Virasoro algebra for
$\widehat \g$ and for $\widehat{\g^0}$ coincide. It is not difficult to show that this property  is equivalent to 1) the equality of Sugawara central charges, and also equivalent to 2) the finiteness of the decomposition of  each eigenspace of the center of $\g^0$ of an irreducible level $1$ highest weight $\widehat \g$-module (equivalently, all of them) with respect to $\widehat{\g^0}$. Maximal such conformal embeddings were classified in \cite{SW} (see also  \cite{AGO}), and the corresponding decompositions of level  $1$  $\widehat \g$-modules with respect to $\widehat{\g^0}$ were completely described in \cite{KWW}, \cite{KS}, \cite{CKMP}, \cite{KMPX}. 

More recently, some new examples of conformal embeddings for affine vertex algebras have been found \cite{A}, \cite{AP2}, \cite{Perse}. Of course, the
corresponding $\widehat \g$-modules are not integrable, but still the equality of Sugawara conformal vectors turned out to be equivalent to the equality of their central charges. Moreover, it turned out that the decompositions of these modules (when known) are always completely reducible, though not necessarily finite (at each eigenspace of the genter of $\g^0$). In almost all cases they are described in \cite{AKMPP}.\par
Recall that for an affine vertex (super)algebra $V$ of level $k$, associated to a simple Lie (super)algebra $\g$ with a (super)symmetric non-zero invariant bilinear form, the Sugavara Virasoro vector is given by the formula $$\omega_V=\frac{1}{2(k+h^\vee)}\sum_i b_i a_i, $$ where $\{a_i\},\,\{b_i\}$ are dual bases of $\g$, $h^\vee$ is the dual Coxeter number (i.e., half  of the eigenvalue of the Casimir element of $\g$), and $k+h^\vee$ is assumed to be nonzero.\par
In the present paper we study conformal embeddings of the maximal affine vertex subalgebra of a simple minimal W-algebra $W_k(\g,\theta)$, attached to a minimal grading of a simple basic Lie superalgebra $\g$. Recall that a {\it} minimal grading of $\g$ is the 
$\frac{1}{2}\ganz$-grading of the form 
$$
\g=\g_{-1}\oplus\g_{-1/2}\oplus\g_{0}\oplus\g_{1/2}\oplus\g_{1},
$$
where $\g_{\pm 1}=\C  e_{\pm \theta}$   are the root spaces of $\g$ attached to the even roots $\pm \theta$. It is well known that there exists a unique up to isomorphism such grading for any simple Lie algebra, but for a simple basic Lie superalgebra there are several (see \cite{KW}).\par
It is proven in \cite{KW}Ê that, for $k\ne -\ h^\vee$, the universal minimal W-algebra $W^k(\g,\theta)$ has a unique simple quotient, denoted by $W_k(\g,\theta)$, and that $W^k(\g,\theta)$
is freely generated by the elements  $J^{\{a\}}$ ($a$ runs over a basis of $\g^\natural$), $G^{\{u\}}$ ($u$ runs over a basis of $\g_{-1/2}$), and the Virasoro vector $\omega$, where $\g^\natural$ is the centralizer in $\g_0$ of $e_{-\theta}$ (hence of $e_\theta$). Furthermore the elements $J^{\{a\}}$ (resp. $G^{\{u\}}$) are primary of conformal weight $1$ (resp. $3/2$), with respect  to $\omega$, and they satisfy the following $\l$-brackets:
\begin{align*}
[{J^{\{a\}}}_{\lambda}J^{\{b\}}]&=J^{\{[a,b]\}}+\lambda\left((k+h^\vee/2)(a|b)-\tfrac{1}{4}\kappa_0(a,b)\right),\ a,b\in \g^\natural,\\
[{J^{\{a\}}}_{\lambda}G^{\{u\}}]&=G^{\{[a,u]\}},\ a\in \g^\natural,\,u\in\g_{-1/2},
\end{align*}
where $(\cdot | \cdot)$ is the invariant bilinear form of $\g$, normalized by the condition $(\theta|\theta)=2$ and $\kappa_0$ is the Killing form of $\g_0$. The  $\l$-brackets between elements $G^{\{u\}}$ and $G^{\{v\}}$  given in \cite{KW2} Ê(see \eqref{GG}) are much more complicated.\par
Our first key result is an important simplification of this formula. It reads (see Lemma \ref{formG}):
\begin{align}\label{GGsimplified}
&[{G^{\{u\}}}_{\lambda}G^{\{v\}}]=-2(k+h^\vee)(e_\theta|[u,v])\omega+(e_\theta|[u,v])\sum_{\alpha=1}^{\dim \g^\natural} 
:J^{\{a^\alpha\}}J^{\{a_\alpha\}}:+\\\notag
&\sum_{\gamma=1}^{\dim\g_{1/2}}:J^{\{[u,u^{\gamma}]^\natural\}}J^{\{[u_\gamma,v]^\natural\}}:
+2(k+1)\partial J^{\{[[e_\theta,u],v]^\natural\}}\\\notag
&+ 4 \l  \sum_{i\in I} \frac{p(k)}{k_i} J^{\{[[e_\theta, u],v]_i^\natural\}}+2\l^2(e_\theta|[u,v])p(k)\vac.
\end{align}
Here $\{a_\alpha\}$ (resp. $\{u_\gamma\}$) is a basis of  $\g^\natural$ (resp. $\g_{1/2}$) and $\{a^\alpha\}$ (resp. $\{u^\gamma\}$) is the corresponding dual basis w.r.t. $(\cdot|\cdot)$ (resp w.r.t. $\langle\cdot,\cdot\rangle_{\rm ne}=(e_{-\theta}|[\cdot,\cdot])$),  $a^\natural$ is the  orthogonal projection of $a\in\g_{0}$ on $\g^\natural$, $a_i^\natural$ is the projection of $a^\natural$ on the $i$th minimal ideal $\g_i^\natural$ of $\g^\natural$, $k_i=k+\frac{1}{2}(h^\vee-h^\vee_{0,i})$, where $h^\vee_{0,i}$ is the dual Coxeter number of $\g_i^\natural$ with respect to the restriction of the form $(\cdot|\cdot)$.\par
The important new ingredient here is the monic quadratic polynomial $p(k)$, which is given in Table 4 of the present paper. (We show that all linear polynomials $k_i$ divide $p(k)$).
Note that in all cases $\g^\natural$ is a direct sum of all its minimal ideals, with the exception of $\g= sl(2+n|n)\ (n>0)$, which we exclude from consideration.

This result allows us to determine all non--critical {\sl collapsing levels}, i.e. those $k$ with $k\ne-h^\vee$ for which the image in $W_k(\g,\theta)$ of the universal affine vertex algebra generated by $J^{\{a\}}$, $a\in\g^\natural$, coincides with the whole vertex algebra $W_k(\g,\theta)$ (see Section \ref{cl}). Namely, we show that a non-critical level $k$ is collapsing iff $p(k)=0$ (Theorem \ref{TT}).\par

Special collapsing levels are those $k$
 for which $W_k(\g,\theta)=\C\vac$. It turns out that this happens precisely in the following two situations (see Proposition \ref{ArakawaMoreau}):  
\begin{enumerate}
\item $k=-\frac{h^\vee}{6}-1$ and $\g$ is one of the Lie algebras of exceptional Deligne's series \cite{D}: $A_2$, $G_2$, $D_4$, $F_4$, $E_6$, $E_7$, $E_8$, or $\g=psl(m|m)$ ($m\ge2$), $osp(n+8|8)$ ($n\ge2$), $spo(2|1)$, $F(4)$, $G(3)$ (for both choices of $\theta$); in all these cases
$$
p(k)=(k+\frac{h^\vee}{6}+1)(k+\frac{h^\vee}{3}).
$$
\item $k=-\frac{1}{2}$ and $\g=spo(n|m)$ ($n\ge1$); in this case
$$
p(k)=(k+\half)(k+\frac{h^\vee+1}{6}).
$$
\end{enumerate}

Note that a connection of minimal simple $W$-algebras to Deligne's series was first observed in \cite{Kawa}, and the collapsing levels $k$ for which $W_k(\g,\theta)=\C\vac$ when $\g$ is Lie algebra have been already found in \cite{AM}. Note also that, as an application, we obtain an (almost) unified proof of the formula for the superdimension when $\g$ appears in (1) above (cf. \cite{D}, \cite{Wes}):
$$
\sdim\, \g=\frac{2(h^\vee+1)(5h^\vee-6)}{h^\vee+6}.
$$

We then proceed to determine the {\sl conformal levels}, i.e.\  those $k$ for which the image in $W_k(\g,\theta)$ of the universal affine vertex algebra generated by $J^{\{a\}}$, $a\in\g^\natural$, is conformally embedded in $W_k(\g,\theta)$. Obviously, the collapsing levels are conformal. A remarkable fact, discovered in this paper, is that a non-collapsing conformal embedding may happen only if $k=-\frac{2}{3}h^\vee$ or $k=\frac{1-h^\vee}{2}$. We do not have a uniform explanation for this; we discovered this fact after listing all non-collapsing conformal levels by a case-wise verification (see Section \ref{section4}).
Another striking fact follows from this: the equality of central charges of the affine part and of $W_k(\g,\theta)$ is not only (obviously) a necessary, but also a sufficient condition for $k$ to be a conformal level! 

A further application of our simplified $\l$-bracket is given in Section \ref{sln+1inW}. It is the generalization to higher rank of the realization,  discovered in \cite{A-2014}, of  $\widehat{sl(3)}$ at level $-\frac{3}{2}$ in the $N=4$ superconformal algebra tensored with a lattice vertex algebra. More precisely we realize $\widehat{sl(n+1)}$ at level $-\frac{n+1}{2}$ as the zero charge component of the tensor product of $W_{\frac{n-1}{2}}(sl(2|n),\theta)$ with a rank one lattice vertex algebra.  
The $W$-algebras ${W}_{k}(sl(2|n), \theta)$  have been very much investigated in physics literature under the name Bershadsky-Knizhnik superconformal algebra \cite{Ber},  \cite{IMP}, \cite{Kn}, \cite{Ras}.  Our result show that these algebras at certain central charges are closely related with  affine vertex algebras at level $-\frac{n}{2}$.

In our subsequent paper \cite{AKMMP2} we describe in most of the  cases how $W_k(\g,\theta)$ decomposes with respect to its affine part when $k$ is a conformal (non-collapsing) level. In all these cases the decomposition turns out to be completely reducible, though not necessarily finite.
\vskip10pt
{\bf Notation.} The base field is $\C$. As usual, tensor product of a  family of vector spaces ranging over the empty set is meant to be $\C$. For a vector superspace $V=V_{\bar 0}\oplus V_{\bar 1}$ we set   $\Dim V= \dim V_{\bar 0} | \dim V_{\bar 1}$,   $\dim V= \dim V_{\bar 0} + \dim V_{\bar 1}$    and $\sdim V= \dim V_{\bar 0} - \dim V_{\bar 1}$.

\vskip10pt

{\bf Acknowledgments.} We would like to thank T. Arakawa, I. Entova and J. Rasmussen for useful discussions on various subjects related to  this paper.\par
Dra{\v z}en~Adamovi\'c  and Ozren~Per\v{s}e are partially supported by the Croatian Science Foundation under the project 2634 and by the Croatian Scientific Centre of Excellence  QuantixLie.
Victor  Kac is partially supported by a NSF grant. Pierluigi M\"oseneder Frajria and Paolo Papi are partially supported by PRIN project ``Spazi di Moduli e Teoria di Lie''.

\section{Minimal quantum affine $W$--algebras}\label{uno}

Let $\g$ be a basic simple Lie superalgebra. Recall  that among all simple finite-dimensional Lie superalgebras it is characterized by the properties that its even part $\g_{\bar 0}$ is reductive and that it admits a non-degenerate invariant supersymmetric bilinear form $( \cdot | \cdot)$. A complete list of basic simple Lie superalgebras  consists of simple 
finite-dimensional Lie algebras and the Lie superalgebras $sl(m|n)\ (m,n\ge 1, m\ne n)$, $psl(m|m)\ (m\ge 2)$, $osp(m|n)=spo(n|m)\ (m\geq 1, n\ge 2 \text{ even})$, $D(2,1;a)\ (a\in\C,\,a\ne 0,-1)$, $F(4)$, $G(3)$ \cite{KacLie}. Recall that $sl(2|1)$and $spo(2|2)$ are isomorphic. Also, the Lie superalgebras $D(2,1;a)$ and $D(2,1;a')$ are isomorphic if and only if $a, a'$  lie on  the same orbit of the group generated by the transformations $a\mapsto a^{-1}$ and $a\mapsto -1-a$, and $D(2,1;1)= osp(4|2)$. See \cite{KacLie} for details.\par
Choose a Cartan subalgeba $\h$ for $\g_{\bar 0}$  and let $\D$ be the set of roots. A root $-\theta$ is called {\it minimal} if it is even and there exists an additive  function $\varphi:\D\to \R$ such that $\varphi_{|\D}\ne 0$ and $\varphi(\theta)>\varphi(\eta),\,\forall\,\eta\in\D\setminus\{\theta\}$. Obviously, a minimal root $-\theta$ is the lowest root of one of the simple components of $\g_{\bar 0}$ (in the ordering defined by $\varphi$). Conversely, it is easy to see, using the description of $\D$ given in \cite{KacLie}, that a long root of any simple component of $\g_{\bar 0}$ (with respect to its Killing form) is minimal except when $\g=osp(3|n)$ and the simple component of $\g_{\bar 0}$ is $so(3)$.\par
Fix a minimal root $-\theta$ of $\g$. We may choose root vectors $e_\theta$ and $e_{-\theta}$ such that 
$$[e_\theta, e_{-\theta}]=x\in \h,\qquad [x,e_{\pm \theta}]=\pm e_{\pm \theta}.$$
Due to the minimality of $-\theta$, the eigenspace decomposition of $ad\,x$ defines a {\it minimal} $\frac{1}{2}\ganz$-grading  (\cite[(5.1)]{KW}):
\begin{equation}\label{gradazione}
\g=\g_{-1}\oplus\g_{-1/2}\oplus\g_{0}\oplus\g_{1/2}\oplus\g_{1},
\end{equation}
where $\g_{\pm 1}=\C  e_{\pm \theta}$.   We thus have  a bijective correspondence between minimal gradings (up to an automorphism of $\g$) and minimal roots (up to the action of the Weyl group). Furthermore, one has
\begin{equation}\label{gnatural}
\g_0=\g^\natural\oplus \C x,\quad\g^\natural=\{a\in\g_0\mid (a|x)=0\}.
\end{equation}
Note that  $\g^\natural$ is the centralizer of the triple $\{f_\theta,x,e_\theta\}$.
We can choose $
\h^\natural=\{h\in\h\mid (h|x)=0\},
$ as a  Cartan subalgebra of the Lie superalgebra $\g^\natural$,  so that $\h=\h^\natural\oplus \C x$.\par
For a given choice of a minimal root $-\theta$, we normalize the invariant bilinear form $( \cdot | \cdot)$ on $\g$ by the condition
\begin{equation}\label{normalized}
(\theta | \theta)=2.
\end{equation}
The dual Coxeter number $h^\vee$ of the pair $(\g, \theta)$ (equivalently, of the minimal gradation \eqref{gradazione}) is defined to be   half the eigenvalue of the Casimir operator of $\g$ corresponding to $(\cdot|\cdot)$, normalized  by \eqref{normalized}.
\par
 The complete list of the Lie superalgebras $\g^\natural$, the $\g^\natural$--modules $\g_{\pm 1/2}$ (they are isomorphic and  self-dual),  and $h^\vee$ for all possible choices of $\g$ and of $\theta$ (up to isomorphism)  is given in Tables  1,2,3 of \cite{KW}. For the reader's convenience we reproduce them below. Note that in these tables 
 $\g=osp(m|n)$ (resp. $\g=spo(n|m)$) means that $\theta$ is the highest root of the simple component $so(m)$ (resp. $sp(n)$) of $\g_{\bar 0}$. Also, for $\g= sl(m|n)$ or $psl(m|m)$ we always take $\theta$ to be the highest root of the simple component $sl(m)$ of $\g_{\bar 0}$ (for $m=4$ we take one of the simple roots). Note that the exceptional Lie superalgebras $\g=F(4)$ and $\g=G(3)$ appear in both Tables  2 and 3, which corresponds to the two inequivalent choices of $\theta$, the first one being a root of the simple component $sl(2)$ of $\g_{\bar 0}$.
\vskip10pt
{\tiny
\centerline{Table 1}

\noindent {\sl $\g$ is a simple Lie algebra.}
\vskip 5pt
\centerline{\begin{tabular}{c|c|c|c||c|c|c|c}
$\g$&$\g^\natural$&$\g_{1/2}$&$h^\vee$&$\g$&$\g^\natural$&$\g_{1/2}$&$h^\vee$\\
\hline
$sl(n), n\geq 3$&$gl(n-2)$&$\C^{n-2}\oplus (\C^{n-2})^* $&$n$&$F_4$&$sp(6)$&$\bigwedge_0^3\C^6$ & $9$\\
$so(n), n\geq 5$&$sl(2)\oplus so(n-4)$&$\C^2\otimes\C^{n-4}$&$n-2$&$E_6$&$sl(6)$&$\bigwedge^3\C^6$ & $12$\\
$sp(n), n\geq 2$&$sp(n-2)$&$\C^{n-2} $&$n/2+1$&$E_7$&$so(12)$&$spin_{12}$ & $18$\\
$G_2$&$sl(2)$&$S^3\C^2$&$4$&$E_8$&$E_7$&$\dim=56$ & $30$\\
\end{tabular}}
\vskip 25pt
\centerline{Table 2}

\noindent {\sl $\g$ is not a Lie algebra but $\g^\natural$ is and $\g_{\pm1/2}$ is purely odd ($m\ge1$).}
\vskip 5pt
\centerline{\begin{tabular}{l|c|c|c||c|c|c|c}
$\g$&$\g^\natural$&$\g_{1/2}$&$h^\vee$&$\g$&$\g^\natural$&$\g_{1/2}$&$h^\vee$\\
\hline
$sl(2|m),$&$gl(m)$&$\C^{m}\oplus (\C^{m})^* $&$2-m$&$D(2,1;a)$&{\tiny $sl(2) \oplus sl(2)$}&$\C^2\otimes \C^2$ & $0$\\
$m\ge2$& & & & & & \\
$psl(2|2) $&$sl(2)$&$\C^2\oplus\C^{2}$&$0$&$F(4)$&$so(7)$&$spin_7$ & $-2$\\
$spo(2|m)$&$so(m)$&$\C^{m} $&$2-m/2$&$G(3)$&$G_2$&$\Dim= 0|7$ & $-3/2$\\
$osp(4|m)$&$sl(2)\oplus sp(m)$&$\C^2\otimes \C^m$&$2-m$\\
\end{tabular}}
\vskip 25pt
\centerline{Table 3}

\noindent {\sl Both $\g$ and $\g^\natural$ are  not  Lie algebras. ($m,n\geq 1$)}
\vskip 5pt
\centerline{\begin{tabular}{c|c|c|c}
$\g$&$\g^\natural$&$\g_{1/2}$&$h^\vee$\\
\hline
$sl(m|n)$, $n\ne m>2$&$gl(m-2|n)$&$\C^{m-2|n}\oplus(\C^{m-2|n})^*$&$m-n$\\
\hline
$psl(m|m),\,m>2$&$sl(m-2|m)$& $\C^{m-2|m}\oplus(\C^{m-2|m})^*$&$0$\\
\hline
$spo(n|m),\,n\ge 4$& $spo(n-2|m)$ &$\C^{n-2|m}$&$1/2(n-m)+1$\\
\hline
$osp(m|n),\,m\geq 5$&$osp(m-4|n)\oplus sl(2)$ &$\C^{m-4|n}\otimes \C^2$&$m-n-2$\\
\hline
 $F(4)$&$D(2,1;2)$ &$\Dim=6|4$& $3$\\
 \hline
$G(3)$&$osp(3|2)$ &$\Dim=4|4$& $2$\\
\end{tabular}}
}
\vskip20pt
In this paper we shall exclude the case of $\g=sl(n+2|n)$, $n>0$. In all other cases the Lie superalgebra $\g^\natural$ decomposes in a direct sum of all its minimal ideals, called components of $\g^\natural$:
$$\g^\natural=\bigoplus\limits_{i\in I}\g^\natural_i,
$$
where each  summand is either the (at most 1-dimensional)  center of $\g^\natural$ or is a basic classical simple Lie superalgebra different from $psl(n|n)$. We will also exclude $\g=sl(2)$ for reasons which will be explained at the very beginning of Section \ref{cl}.\par It follows from the tables that the index set $I$ has cardinality $r=0$, $1$, $2$, or $3$ (the latter case happens only for $\g=so(8)$). The case $r=0$, i.e.\ $\g^\natural=\{0\}$, happens if and only if $\g=spo(2|1)$. In the case when the center is non-zero (resp. zero) we use $I=\{0,1,\ldots,r-1\}$ (resp. $I=\{1,\ldots,r\}$) as the index set, and denote the center of $\g^\natural$ by $\g^\natural_0$.

Let $C_{\g^\natural_i}$  be the Casimir operator of $\g^\natural_i$ corresponding to $(\cdot|\cdot)_{|\g^\natural_i\times \g^\natural_i}$. We define the dual Coxeter number $h^\vee_{0,i}$ of $\g_i^\natural$ as half of the eigenvalue of $C_{\g^\natural_i}$  acting on $\g^\natural_i$ (which is $0$ if $\g_i^\natural$ is abelian). Their values are given in Table 4 of \cite{KW}.
If all $h_{0,i}, i\in I$, are equal, we denote this number by $h^\vee_0$.
\vskip10pt
In \cite{KWR},  \cite{KW}  a vertex algebra $W^{k} (\g, f)$, called a {\it universal W-algebra}, has been associated to each triple $(\g, f, k)$, where $\g$ is a basic simple Lie superalgebra  with a  non-degenerate invariant supersymmetric bilinear form, $f$ is a nilpotent element of $\g_{\bar 0}$, and $k\in \C$, by applying the quantum Hamiltonian reduction functor $H_f$ to the affine vertex algebra $V^k(\g)$ (see below for the definition of $V^k(\g)$ and \cite[Section 2]{KWR} for the definition of $H_f$). In particular, it was shown that, for $k$ non-critical, i.e., $k\ne - h^\vee$,  ${W}^{k}(\g,f)$ has  a Virasoro vector $\omega$, \cite[(2.2)]{KW}, making it  a conformal vertex algebra, and a set of free generators was constructed. For $k$ non-critical the vertex algebra 
$W^{k} (\g, f)$  has a unique simple quotient, denoted by $W_{k} (\g, f)$.\par
In greater detail,  paper \cite{KW}  studies the universal minimal W-algebras of level $k$, which correspond to $f=e_{-\theta}$ where $-\theta$ is a minimal root.  These W-algebras 
and their simple quotients will be of primary interest for the present paper.  To simplify notation, we set 
\begin{align*}
{W}^{k}(\g, \theta)=W^{k}(\g, e_{-\theta}),\quad
{W}_{k}(\g, \theta)=W_{k}(\g, e_{-\theta}).
\end{align*}

Throughout  the paper we shall assume that $(\theta|\theta)=2$ and that $k\ne - h^\vee$. Then the Virasoro vector $\omega$ of 
${W}^{k}(\g, \theta)$  has central charge \cite[(5.7)]{KW}
\begin{equation}\label{cgk}
c(\g,k)=\frac{k\,\sdim\g}{k+h^\vee}-6k+h^\vee-4.
\end{equation}\par
Thereafter,  we  adopt the following notation for the vertex operator corresponding to the state $a$:
$$
Y(a,z)=\sum_{n\in\ganz} a_{(n)}z^{-n-1}.
$$
 We frequently use the notation of the $\l$-bracket and of the  normally ordered product:
$$[a_\l b]=\sum_{n\geq 0}Ê\frac{\l^n}{n!} (a_{(n)} b),\qquad : ab: = a_{(-1)} b.$$ 
A defining set of axioms, satisfied by these operations, is described in \cite{BK2}. We frequently use these axioms in our calculations.\par
If the vertex algebra admits a Virasoro vector and $\D_a$ is the conformal weight of a state $a$, then we also write the corresponding vertex operator as 
$$
Y(a,z)=\sum_{m\in\ganz-\D_a} a_{m}z^{-m-\D_a},
$$ 
so that 
$$
a_{(n)} =a_{n-\D_a+1},\  n\in\ganz,\quad  a_m =a_{(m+\D_a-1)},\ m\in\ganz-\D_a.
$$
The vacuum vector will be denoted by $\vac$.
\vskip10pt

The following result is Theorem 5.1 of \cite{KW}  and gives the structure of    $W^{k}(\g, \theta)$. 
Notice that formula \eqref{GG} below is taken from \cite{KW2}. Notation is as in the Introduction.
\begin{theorem}\label{kac} (a) The vertex algebra ${W}^{k}(\g, \theta)$  is strongly and freely generated by elements  $J^{\{a\}}$, where $a$ runs over a basis of $\g^\natural$, $G^{\{v\}}$, where $v$ runs over a basis of $\g_{-1/2}$, and the
Virasoro vector  $\omega$.\par
(b) The elements $J^{\{a\}},\, G^{\{v\}}$ are primary of conformal weight $1$ and $3/2$,\, respectively, with respect to $\omega$.\par
(c) The following $\l$-brackets hold:
\begin{equation}\label{JJ}
[{J^{\{a\}}}_{\lambda}J^{\{b\}}]=J^{\{[a,b]\}}+\lambda\left((k+h^\vee/2)(a|b)-\tfrac{1}{4}\kappa_0(a,b)\right),\ a,b\in \g^\natural,
\end{equation}
where $\kappa_0$ is the Killing form of $\g_0$, and
\begin{equation}\label{JG}
[{J^{\{a\}}}_{\lambda}G^{\{u\}}]=G^{\{[a,u]\}},\ a\in \g^\natural,\,u\in\g_{-1/2}.
\end{equation}
(d) The following $\l$-brackets hold for $u, v\in\g_{-1/2}$:
\begin{align}\label{GG}
&[{G^{\{u\}}}_{\lambda}G^{\{v\}}]=-2(k+h^\vee)(e_\theta|[u,v])\omega+(e_\theta|[u,v])\sum_{\alpha=1}^{\dim \g^\natural} 
:J^{\{u^\alpha\}}J^{\{u_\alpha\}}:\\\notag
&+\sum_{\gamma=1}^{\dim\g_{1/2}}:J^{\{[u,u^{\gamma}]^\natural\}}J^{\{[u_\gamma,v]^\natural\}}:
+\lambda\sum_{\gamma=1}^{\dim\g_{1/2}} J^{\{[[u,u^\gamma],[u_\gamma,v]]^\natural\}}\\\notag
&+2(k+1)(\partial+2\lambda)J^{\{[[e_\theta,u],v]^\natural\}}+ \tfrac{\lambda^2}{3}
v \sum_{\gamma=1}^{\dim\g_{1/2}} c'([u,u^\gamma]^\natural,[u_\gamma,v]^\natural).\\\notag&+\tfrac{\lambda^2}{3}(e_\theta|[u,v])\left(-(k+h^\vee)c(\g,k)+(k+h^\vee/2) \sdim\g^\natural-\tfrac{1}{2}\sum_{i\in I}h_{0,i}^\vee \sdim\g_i^\natural\right).
\end{align}
Here $\{u_\alpha\}$ (resp. $\{u_\gamma\}$) is a basis of  $\g^\natural$ (resp. $\g_{1/2}$) and $\{u^\alpha\}$ (resp. $\{u^\gamma\}$) is the corresponding dual basis w.r.t. $(\cdot|\cdot)$ (resp w.r.t. $\langle\cdot,\cdot\rangle_{\rm ne}=(e_{-\theta}|[\cdot,\cdot])$),
$a^\natural$ is the orthogonal projection of $a\in\g_0$ to $\g^\natural$, and
$$c'(a,b)=
k(a|b)+\tfrac{1}{2}{\rm str}_{|\bigoplus\limits_{i>0}\g_{i}}(ad(a) ad(b)).
$$
\end{theorem}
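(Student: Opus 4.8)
The plan is to read everything off the quantum Hamiltonian reduction model, since $W^{k}(\g,\theta)$ is by definition the cohomology $H(\mathcal C,d)$ of the BRST complex attached to $f=e_{-\theta}$. First I would set up the complex $\mathcal C=V^k(\g)\otimes F^{ne}\otimes F^{ch}$, where $F^{ne}$ is the neutral free superfermion vertex algebra built on $\g_{1/2}$ with the form $\langle\cdot,\cdot\rangle_{\rm ne}=(e_{-\theta}|[\cdot,\cdot])$, $F^{ch}$ is the charged (ghost) system attached to $\g_{+}=\g_{1/2}\oplus\g_1$, and $d$ is the zero mode of the standard odd field determined by $f$ (\cite{KWR},\cite{KW}). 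All four assertions are then extracted from this model.

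For (a) and (b) I would grade $\mathcal C$ by $\ad x$-weight together with the ghost charge and run the Kac--Roan--Wakimoto filtration: the associated graded differential is the Koszul differential enforcing the constraint attached to $f$, whose cohomology localizes onto the centralizer $\g^f$. This yields both the vanishing of $H^j$ for $j\ne 0$ and free generation by one class per basis vector of $\g^f$. Since $-\theta$ is minimal, $sl(2)$-theory for the triple $\{e_\theta,x,e_{-\theta}\}$ decomposes $\g^f=\C e_{-\theta}\oplus\g_{-1/2}\oplus\g^\natural$, with the three summands sitting in $\ad x$-weights $-1,-\tfrac12,0$. Choosing explicit cocycle representatives --- $\omega$ the reduction Virasoro vector, $G^{\{u\}}$ the lift of $u\in\g_{-1/2}$ with its fermionic tail, and $J^{\{a\}}=a+(\text{ghost correction})$ for $a\in\g^\natural$ --- gives exactly the generators in (a). Part (b) then follows by computing $[\omega_\lambda J^{\{a\}}]$ and $[\omega_\lambda G^{\{u\}}]$ on these representatives: an element of $\g^f$ of $\ad x$-weight $-j$ acquires conformal weight $1+j$, producing weights $1$ and $\tfrac32$, and the higher $\lambda$-coefficients vanish, i.e.\ the generators are primary.

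Parts (c) and (d) are $\lambda$-bracket computations on the chosen representatives; because $d$ is an even derivation, brackets of cohomology classes are computed from brackets of representatives modulo coboundaries. The $JJ$ and $JG$ brackets \eqref{JJ},\eqref{JG} are short: the $\kappa_0$ correction in \eqref{JJ} is the ghost contribution to the current--current OPE, and \eqref{JG} is immediate since $\ad\g^\natural$ preserves $\g_{-1/2}$. The real work is \eqref{GG}: I would expand $[{G^{\{u\}}}_\lambda G^{\{v\}}]$ termwise, collecting the affine contribution from $V^k(\g)$, the neutral-fermion contractions, and the ghost contractions, and reassembling them into the normally ordered current--current terms, the $2(k+1)\partial J$ term, and the two $\lambda^2$ scalars. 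The main obstacle is exactly this reassembly of the quantum corrections: producing the central-charge-dependent scalar $-(k+h^\vee)c(\g,k)+(k+h^\vee/2)\sdim\g^\natural-\tfrac12\sum_{i\in I}h^\vee_{0,i}\sdim\g^\natural_i$ together with the structure constants $c'$ requires careful bookkeeping of Casimir eigenvalues on $\bigoplus_{i>0}\g_i$, of supertrace normalizations, and of the freedom to modify representatives by coboundaries --- which is where the superalgebra sign conventions must be tracked with care (and for which one may follow \cite{KW2}).
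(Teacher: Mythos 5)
Your proposal is sound and follows essentially the same route as the paper's source: the paper does not prove Theorem \ref{kac} itself, but quotes it as Theorem 5.1 of \cite{KW} (with formula \eqref{GG} taken from the Corrigendum \cite{KW2}), and those references establish it exactly by the quantum Hamiltonian reduction argument you outline --- the complex $V^k(\g)\otimes F^{ne}\otimes F^{ch}$, the Kac--Roan--Wakimoto filtration yielding vanishing of higher cohomology and free generation by classes indexed by $\g^f=\C e_{-\theta}\oplus\g_{-1/2}\oplus\g^\natural$, and $\lambda$-bracket computations on explicit representatives. Two minor cautions: the BRST differential is an \emph{odd} (not even) derivation of all products, which is still enough to compute brackets of cohomology classes on representatives; and the ``careful bookkeeping'' you defer in part (d) is precisely the step that the original paper \cite{KW} got wrong and had to correct in \cite{KW2}, so in a complete write-up it is the one part that cannot be waved at.
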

\vskip10pt
Let $\aa$ be a Lie superalgebra equipped with a nondegenerate  invariant supersymmetric bilinear form $B$. The universal affine vertex algebra $V^B(\aa)$ is  the universal enveloping vertex algebra of  the  Lie conformal superalgebra $R=(\C[T]\otimes\aa)\oplus\C$ with $\lambda$-bracket given by
$$
[a_\lambda b]=[a,b]+\lambda B(a,b),\ a,b\in\aa.
$$
In the following, we shall say that a vertex algebra $V$ is an affine vertex algebra if it is a quotient of some $V^B(\aa)$.
If $\aa$ is equipped with a fixed bilinear form $(\cdot | \cdot)$ and $B=m (\cdot | \cdot)$, then we denote  $V^B(\aa)$ by  $V^m(\aa)$. Set $ \aa'=\aa\otimes\C[t,t^{-1}]\oplus \C K$ be the affinization of $\aa$. This is the Lie superalgebra with $K$ a central element and Lie bracket defined, for $a,a'\in\aa$,
$$
[a\otimes t^k,a'\otimes t^h]=[a,b]\otimes t^{h+k}+\d_{h,-k}(a|b)K.
$$
Set $\aa^+=span(a\otimes t^k\mid a\in\aa, \ k\ge0)$. Let $\C_m$ be the one-dimensional representation of $\aa^+\oplus\C K$ such that $a\otimes t^k\cdot1=0$ for all $a\in\aa$ and $k\ge0$ and $K\cdot1=m$. Recall that $V^m(\aa)$ can be realized as $U(\aa')\otimes_{U(\aa^++\C K)} \C_m$.  Regard $d=t\frac{d}{dt}$  as a derivation of $\aa'$. Then $-d$ induces a Hamiltonian operator $H$  whose eigenspace decomposition defines a $\ganz$-grading of $V^m(\aa)$. Since the eigenvalues of $H$ are positive and the zero eigenspace is $\C_m$, there is a unique proper  maximal graded ideal in $V^m(\aa)$. We let $V_m(\aa)$ denote the corresponding graded simple quotient. 

It is known that  $V^m(\aa)$ admits a unique graded irreducible quotient,  which we denote by $V_m(\aa)$.


Let $\mathcal{V}^k(\g^\natural)$ be the  subalgebra of the vertex algebra ${W}^{k}(\g, \theta)$,  generated by $\{J^{\{a\}}\mid a\in\g^\natural\}$.  The vertex algebra 
$\mathcal{V}^k(\g^\natural)$ is isomorphic to a universal affine vertex algebra. More precisely, letting 
\begin{equation}\label{ki}
k_i=k+\half({h^\vee-h_{0,i}^\vee}),\ i\in I,
\end{equation}
 the map $a\mapsto J^{\{a\}}$ extends, by \eqref{JJ},  to an isomorphism
$\mathcal{V}^k(\g^\natural)\simeq V^{B}(\g^\natural)$ where $B(a,b)=\d_{ij}k_i(a|b),\,a\in \g^\natural_i, b\in \g^\natural_j$. 
Note that

\begin{equation}\label{current}
\mathcal{V}^k(\g^\natural)\simeq \bigotimes_{i\in I}V^{k_i}(\g_i^\natural).
\end{equation}
We also set $\mathcal V_k(\g^\natural)$ to be the image of $\mathcal{V}^k(\g^\natural)$ in ${W}_{k}(\g, \theta)$. Clearly we can write
$$
\mathcal V_k(\g^\natural)\simeq \bigotimes_{i\in I} \mathcal V_{k_i}(\g_i^\natural),
$$
where $\mathcal V_{k_i}(\g_i^\natural)$ is some quotient (not necessarily simple) of $V^{k_i}(\g^\natural_i)$.
\vskip10pt
Recall that the fusion product  of two subsets $A,B$ of a vertex algebra $V$ is 
$$
A\cdot B=span(a_{(n)}b\mid n\in\ganz,\ a\in A,\ b\in B).
$$
This product is associative due to the Borcherds identity (cf. \cite{BK2}).\vskip5pt
Recall that a vector $v$ in a representation  of a vertex algebra $V^B(\aa)$ is {\it singular} if $a_{n}v=0$ for any $a\in\aa$ and $n>0$. We introduce a similar notion for quantum
affine $W$-algebras.
\begin{defi}
We will say that a vector $v\in W^k(\g, \theta)$ is singular if $a_{n} v=0$ for $n>0$ and $a$ in the set of generators $J^{\{b\}} (b\in\g^\natural), G^{\{u\}}\ (u\in\g_{-1/2}), \omega$.
\end{defi}
Note that if $v\notin \C \vac$ is singular, then $W^k(\g, \theta)\cdot v$ is a proper ideal of $W^k(\g, \theta)$. (It is a left ideal due to the associativity of the fusion product, and is also a right ideal since  
$W^k(\g, \theta)$ admits a Virasoro vector).
\section{Collapsing levels}\label{cl}
\begin{defi} If ${W}_{k}(\g, \theta)=\mathcal{V}_{k}(\g^\natural)$, we say that $k$  a {\sl collapsing level}.
\end{defi}
In this Section we  give a complete classification of collapsing  levels. Our main tool will be a certain degree two polynomial $p(k)$, which turns out to govern the 
first and the second product  of  the elements $G^{\{u\}},\, u\in\g_{-1/2}$, of $W^k(\g,\theta)$.\par Remark that if 
 $\g=sl(2)$, then $\g_{-1/2}=\g_{1/2}=\{0\}$, and
 ${W}^{k}(\g, \theta)$ is the universal Virasoro vertex algebra with central charge $\frac{3k}{k+2}-6k-2$, see \eqref{cgk}. Hence $k$ is a collapsing level in this case if and only if $k=-\half$ or $-\frac{4}{3}$, in which case $W_k(\g, \theta)=\C\vac$; also, only these $k$ are conformal. We shall therefore exclude this case from further considerations.
\par
We introduce the aforementioned polynomial $p(k)$ through the following structural lemma. For $a\in\g$, we denote by $P(a)= \overline 0$ or $\overline 1$ the parity of $a$.
\begin{lemma}\label{formG}
Let $u, v\in\g_{-1/2}$.  There is a monic polynomial $p(k)$ of degree two, independent of $u$ and $v$, such that
\begin{equation}\label{formG1}
{G^{\{u\}}}_{(2)}{G^{\{v\}}}=4(e_\theta|[u,v])p(k)\vac.
\end{equation}\par
Moreover, the linear polynomial $k_i$, $i\in I$, defined by \eqref{ki}, divides $p(k)$ and
\begin{equation}\label{result}
{G^{\{u\}}}_{(1)}{G^{\{v\}}}=4\sum_{i\in I} \frac{p(k)}{k_i}J^{\{([[e_\theta,u],v]])^\natural_i\}}
\end{equation}
where  $(a)^\natural_i$ denotes the orthogonal projection of $a\in \g_0$ onto $\g^\natural_i$.
\end{lemma}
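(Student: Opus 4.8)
The plan is to read both products directly off the $\lambda$-bracket \eqref{GG} of Theorem \ref{kac}, using that $G^{\{u\}}_{(n)}G^{\{v\}}=n!$ times the coefficient of $\lambda^{n}$ in $[{G^{\{u\}}}_{\lambda}G^{\{v\}}]$, and then to reorganize the resulting Lie-superalgebraic expressions. For $n=2$ only the two $\lambda^{2}$-terms survive, and since $\omega$ does not occur among them, $G^{\{u\}}_{(2)}G^{\{v\}}$ is automatically a scalar multiple of $\vac$. The summand $(e_\theta|[u,v])(\cdots)$ already has the desired shape, so the one point is to show that the remaining sum $\sum_\gamma c'([u,u^\gamma]^\natural,[u_\gamma,v]^\natural)$ is a $u,v$-independent multiple of $(e_\theta|[u,v])$; one then \emph{defines} $p(k)$ by \eqref{formG1}. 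That $p(k)$ is monic of degree two is then immediate from the explicit coefficients: $c'$ is linear in $k$, while the only source of $k^{2}$ is $-(k+h^\vee)c(\g,k)$, whose leading term is $6k^{2}$, and the overall normalization divides the bracketed scalar by $6$, leaving leading coefficient $1$.

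First I would establish the two proportionalities, which carry the real content. Both $(u,v)\mapsto\sum_\gamma c'([u,u^\gamma]^\natural,[u_\gamma,v]^\natural)$ and $(u,v)\mapsto(e_\theta|[u,v])$ are $\g^\natural$-invariant bilinear pairings on $\g_{-1/2}$: for the first this uses that $\langle\cdot,\cdot\rangle_{\rm ne}$ is $\g^\natural$-invariant (so $\sum_\gamma u^\gamma\otimes u_\gamma$ is $\g^\natural$-fixed) together with $\g^\natural$-equivariance of $a\mapsto a^\natural$ and of $c'$; for the second it uses $[e_\theta,\g^\natural]=0$ and invariance of the form. Since $\g_{-1/2}$ is a self-dual $\g^\natural$-module, the space of invariant pairings of the relevant symmetry type is one-dimensional, forcing proportionality. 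Similarly, the coefficient of $\lambda^{1}$ in \eqref{GG} equals $J^{\{w(u,v)\}}$ with $w(u,v)=\sum_\gamma[[u,u^\gamma],[u_\gamma,v]]^\natural+4(k+1)[[e_\theta,u],v]^\natural$, and I would show that each projection $\big(\sum_\gamma[[u,u^\gamma],[u_\gamma,v]]^\natural\big)_i$ onto the ideal $\g_i^\natural$ is a constant $c_i$, independent of $u,v$ and $k$, times $([[e_\theta,u],v])^\natural_i$. Here it helps to note first, from skew-symmetry of the $\lambda$-bracket (both $G$'s are odd and $G^{\{u\}}_{(2)}G^{\{v\}}\in\C\vac$ is annihilated by $\partial$), that $G^{\{u\}}_{(1)}G^{\{v\}}=-G^{\{v\}}_{(1)}G^{\{u\}}$; this restricts the two equivariant maps to the antisymmetric part $\wedge^{2}\g_{-1/2}\to\g_i^\natural$, on which the bracket followed by projection to $\g_i^\natural$ spans a one-dimensional space, again giving proportionality.

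With the proportionalities in hand the statement assembles cleanly. Projecting $w(u,v)$ onto $\g_i^\natural$ yields $G^{\{u\}}_{(1)}G^{\{v\}}=\sum_{i\in I}(c_i+4(k+1))\,J^{\{([[e_\theta,u],v])^\natural_i\}}$, so \eqref{result} becomes the numerical identity $c_i+4(k+1)=4p(k)/k_i$ for every $i$, that is $4p(k)=k_i\bigl(c_i+4(k+1)\bigr)$. Since $c_i+4(k+1)$ is a polynomial, this exhibits $k_i$ as a divisor of $p(k)$, and it must reproduce the very polynomial already defined from the $\lambda^{2}$-term in the first paragraph. I would verify this by evaluating $c_i$ and the scalar from the first paragraph explicitly in terms of the Casimir eigenvalue of $\g_i^\natural$ on $\g_{1/2}$ and the supertrace in $c'=k(\cdot|\cdot)+\tfrac12{\rm str}(\ad\cdot\,\ad\cdot)$; these are precisely the invariants encoded by $h^\vee$ and $h^\vee_{0,i}$ (through $k_i$ in \eqref{ki}).

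The main obstacle is exactly this last compatibility. A priori the data extracted from the $\lambda^{2}$-coefficient and from the $\lambda^{1}$-coefficient are unrelated, and proving that they fuse into a \emph{single} monic quadratic $p(k)$ that is divisible by \emph{every} $k_i$ forces one to compute the two families of structure constants $c_i$ and the scalar from the $c'$-sum. The basic tools are the $sl(2)$-triple $\{e_\theta,x,f_\theta\}$, the identity $[u,v]=(e_\theta|[u,v])f_\theta$ for $u,v\in\g_{-1/2}$, and the $\g^\natural$-invariance of $\langle\cdot,\cdot\rangle_{\rm ne}$. It is here, if the abstract one-dimensionality of the relevant $\Hom$ spaces is not transparent in a given case, that a residual verification along the list of pairs $(\g,\theta)$ may be unavoidable, and this is what ultimately pins down $p(k)$ (Table 4).
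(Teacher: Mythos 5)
Your treatment of the second product \eqref{formG1} is essentially the paper's own argument: $\g^\natural$-invariance plus uniqueness of the invariant bilinear form of the appropriate symmetry type on $\g_{-1/2}$, with monicity read off from the explicit $\lambda^2$-coefficients of \eqref{GG}. The problems are in your treatment of the first product, and they are genuine. First, a parity error: you assert that "both $G$'s are odd", hence that ${G^{\{u\}}}_{(1)}{G^{\{v\}}}$ is antisymmetric and that the relevant equivariant maps live on $\wedge^2\g_{-1/2}$. But the parity of $G^{\{u\}}$ equals the parity of $u$ (for $\g$ a Lie algebra the $G$'s are even, e.g.\ the Bershadsky--Polyakov generators $G^\pm$), so skew-symmetry gives the super-symmetric rule ${G^{\{u\}}}_{(1)}{G^{\{v\}}}=(-1)^{P(u)P(v)}{G^{\{v\}}}_{(1)}{G^{\{u\}}}$; correspondingly, $([[e_\theta,u],v])^\natural_i$ is super-symmetric in $u,v$ (its super-antisymmetric part is proportional to $(e_\theta|[u,v])\,x$, which projects to zero in $\g^\natural$). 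This is not cosmetic: with your symmetry type the Hom space can vanish. For $\g=so(n)$, $n\ge 7$, one has $\g_{-1/2}\cong\C^2\otimes\C^{n-4}$ and $\Hom_{\g^\natural}(\wedge^2\g_{-1/2},\,sl(2))=0$, while $\Hom_{\g^\natural}(S^2\g_{-1/2},\,sl(2))$ is one-dimensional; your argument would then force the (nonzero, symmetric) maps in question to vanish, a contradiction.

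Second, and more fundamentally, your proof of the divisibility $k_i\mid p(k)$ and of \eqref{result} is not a proof but a deferred computation. You reduce everything to the numerical identities $4p(k)=k_i\bigl(c_i+4(k+1)\bigr)$ and then propose to check them by evaluating the structure constants $c_i$ and the $\lambda^2$-scalar through Casimir eigenvalues and supertraces, conceding that a case-by-case check over the pairs $(\g,\theta)$ "may be unavoidable". That compatibility between the $\lambda^1$- and $\lambda^2$-data is precisely the content of the lemma, and the paper establishes it uniformly by a short vertex-algebra argument that your proposal lacks: apply ${J^{\{b^j_i\}}}_{(1)}$ to ${G^{\{u\}}}_{(1)}{G^{\{v\}}}$ and compute in two ways. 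By the Jacobi identity and \eqref{JG} (which gives ${J^{\{a\}}}_{(n)}G^{\{v\}}=0$ for $n\ge 1$), this equals ${G^{\{[b^j_i,u]\}}}_{(2)}{G^{\{v\}}}=4(e_\theta|[[b^j_i,u],v])\,p(k)\vac$, already known from \eqref{formG1}; computing instead from the expansion ${G^{\{u\}}}_{(1)}{G^{\{v\}}}=\sum_{i,j}\bigl(4([[e_\theta,u],v]|b^j_i)k+c_{i,j}(u,v)\bigr)J^{\{a^j_i\}}$ together with \eqref{JJ}, it equals $\bigl(4([[e_\theta,u],v]|b^j_i)k+c_{i,j}(u,v)\bigr)(-1)^{P(b^j_i)}k_i\vac$. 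Comparing the two, and using that every component of $\g^\natural$ acts nontrivially on $\g_{-1/2}$ and that $(e_\theta|[\cdot,\cdot])$ is nondegenerate, one obtains at once that $k_i$ divides $p(k)$ and the exact coefficients in \eqref{result} --- with no evaluation of the $c_i$, no multiplicity-one claims for $\Hom$ spaces, and no case-by-case work. Without this step (or an actually executed computation of all the constants), the heart of the lemma remains unproved in your proposal.
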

\begin{proof}Recall formal \eqref{GG}.
If $\g^\natural$ is simple then (1) follows directly from the Remark in \cite{KW2}.
In the other cases,  let $f_k(u,v)$  be the constant such that ${G^{\{u\}}}_{(2)}{G^{\{v\}}}=f_k(u,v)\vac$. We observe that, by skew-symmetry of $\lambda$-product, $f_k(\cdot,\cdot)$ is an anti-symmetric form and, by Jacobi identity, $f_k(\cdot,\cdot)$ is $\g^\natural$-invariant. Since $\g_{1/2}$, as a $\g^\natural$--module, is either irreducible or a sum $U\oplus U^*$ with $U$ irreducible and $U$ inequivalent to $U^*$, we see that, up to a constant, there is a unique anti-symmetric $\g^\natural$-invariant nondegenerate bilinear form on $\g_{-1/2}$. Since $(e_\theta|[u,v])$ is such a form, we have that
$f_k(\cdot,\cdot)=4p(k)(e_\theta|[\cdot,\cdot])$. The fact that $p(k)$ is a monic polynomial of degree two follows immediately from \eqref{cgk} and \eqref{GG}.

We now prove \eqref{result}. If $\g^\natural=\{0\}$, i.e. if $\g=spo(2|1)$,  the left hand side of \eqref{result} is zero by \eqref{GG}, whereas the right hand side is obviously zero. Hence we can assume $\g^\natural\ne\{0\}$. Let $\{a^j_i\}$ be a basis of $\g^\natural_i$ and $\{b^j_i\}$ its dual basis with respect to $(\cdot|\cdot)_{|\g^\natural_i\times \g^\natural_i}$.
From \eqref{GG}, we see that
\begin{equation}\label{1productexp}
{G^{\{u\}}}_{(1)}{G^{\{v\}}}=\sum_{i,j}\left(4([[e_\theta,u],v]|b^j_i)k+c_{i,j}(u,v)\right)J^{\{a^j_i\}}
\end{equation}
with $c_{i,j}(u,v)$ a constant independent of $k$. By \eqref{JG},
${J^{\{b^j_i\}}}_{(1)}({G^{\{u\}}}_{(1)}{G^{\{v\}}})={G^{\{[b^j_i,u]\}}}_{(2)}{G^{\{v\}}}=4(e_\theta|[[b^j_i,u],v])p(k)\vac$. 

On the other hand, by \eqref{1productexp}, 
\begin{align*}
{J^{\{b^j_i\}}}_{(1)}({G^{\{u\}}}_{(1)}{G^{\{v\}}})&=\sum_{r,s}\left(4([[e_\theta,u],v]|b^r_s)k+c_{r,s}(u,v)\right){J^{\{b^j_i\}}}_{(1)}{J^{\{a_s^r\}}}\\
&=\left(4([[e_\theta,u],v]|b^j_i)k+c_{i,j}(u,v)\right)(-1)^{P(b_i^j)}k_i\vac,
\end{align*}

Hence
\begin{equation}\label{from1to2}
4(e_\theta|[[b^j_i,u],v])p(k)\vac=\left(4([[e_\theta,u],v]|b^j_i)k+c_{i,j}(u,v)\right)(-1)^{P(b^j_i)}k_i\vac.
\end{equation}
It is easily checked browsing Tables 1--3 that no component of $\g^\natural$ acts trivially on $\g_{-1/2}$, so for each $i\in I$ there is $j$ such that $[b^j_i,u]\ne0$. Since the form $(e_\theta|[\cdot,\cdot])$ is nondegenerate, we can choose $v$ so that $(e_\theta|[[b^j_i,u],v])\ne0$. It follows then at once from \eqref{from1to2} that $k_i$ divides $p(k)$.

 Write explicitly $p(k)=(k+a)k_i$. It follows from \eqref{from1to2} that, for all $i,j$ and all $u,v\in\g_{-1/2}$,
 $$
\left(4([[e_\theta,u],v]|b^j_i)k+c_{i,j}(u,v)\right)(-1)^{P(b^j_i)}=4(e_\theta|[[b^j_i,u],v])(k+a).
$$
It follows that $(e_\theta|[[b^j_i,u],v])=([[e_\theta,u],v]|b^j_i)(-1)^{b^j_i}$ and that 
$$c_{i,j}(u,v)=4(e_\theta|[[b^j_i,u],v])(-1)^{P(b^j_i)}a=4([[e_\theta,u],v]|b^j_i).$$
 Substituting in \eqref{1productexp} we get \eqref{result}.
\end{proof}

The polynomial $p(k)$ can be easily calculated for all $\g$. If $\g^\natural=\{0\}$ then, by \eqref{GG}, $p(k)=-\frac{1}{6}(k+h^\vee)c(\g,k)$. If $\g^\natural\ne\{0\}$ and the set $\{k_i\mid i\in I\}$  has only one element, then we can use formula 2 in  \cite[Remark]{KW2}. If $\{k_i\mid i\in I\}$ has more than one element then, by Lemma \ref{formG}, it has only two elements and $p(k)$ is their product. The outcome is listed in the following table.
\vskip10pt
\centerline{Table 4}
{\tiny
\vskip10pt
\centerline{\begin{tabular}{c|c||c|c}
$\g$&$p(k)$&$\g$&$p(k)$\\
\hline
$sl(m|n)$, $n\ne m$&$(k+1) (k+(m-n)/2)$&$E_6$&$(k+3) (k+4)$\\
\hline
$psl(m|m)$&$ k (k+1)$&$E_7$&$(k+4)(k+6)$\\
\hline
$osp(m|n)$&$(k+2) (k+(m-n-4)/2)$&$E_8$&$(k+6) (k+10)$\\
\hline
$spo(n|m)$&$(k+1/2) (k+(n-m+4)/4)$&$F_4$&$(k+5/2) (k+3)$\\
\hline
$D(2,1;a)$&$(k-a)(k+1+a)$&$G_2$&$ (k+4/3) (k+5/3)$\\
\hline
$F(4)$, $\g^\natural=so(7)$ & $(k+2/3)(k-2/3)$ &$G(3)$, $\g^\natural=G_2$ & $(k-1/2)(k+3/4)$  \\
\hline
$F(4)$, $\g^\natural=D(2,1;2)$ & $(k+3/2)(k+1)$ &$G(3)$, $\g^\natural=osp(3|2)$ & $(k+2/3)(k+4/3)$  \\
\end{tabular}}}
\vskip 15pt



In Proposition \ref{deligne} we will give a uniform formula for $p(k)$ for a certain class of pairs $(\g,\theta)$.

\begin{prop}\label{g2g}
The following are equivalent:
\begin{enumerate}
\item There is $v\in \g_{-1/2}$, $v\ne 0$, such that $G^{\{v\}}=0$ in ${W}_{k}(\g, \theta)$.
\item There is $v\in \g_{-1/2}$, $v\ne0$, such that ${G^{\{u\}}}_{(2)}{G^{\{v\}}}=0$ in ${W}^{k}(\g, \theta)$ for all $u\in \g_{-1/2}$.
\item\label{u(1)v=0} ${G^{\{v\}}}_{(2)}{G^{\{u\}}}=0$ in ${W}^{k}(\g, \theta)$ for all $v,u\in \g_{-1/2}$.
\item $G^{\{v\}}=0$ in ${W}_{k}(\g, \theta)$ for all $v\in\g_{-1/2}$.
\end{enumerate}
In particular, $G^{\{v\}}=0$ in  ${W}_{k}(\g, \theta)$ for all $v\in\g_{-1/2}$ if and only if  $p(k)=0$.
\end{prop}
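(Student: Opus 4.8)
The plan is to prove that each of (1)--(4) is equivalent to the single condition $p(k)=0$, exploiting the relation ${G^{\{u\}}}_{(2)}{G^{\{v\}}}=4(e_\theta|[u,v])p(k)\vac$ from \eqref{formG1} together with the nondegeneracy of the skew form $\beta(u,v):=(e_\theta|[u,v])$ on $\g_{-1/2}$, established inside the proof of Lemma \ref{formG}; recall also that $\g_{-1/2}\ne\{0\}$ since $\g=sl(2)$ has been excluded. Throughout I write $V[n]$ for the conformal weight $n$ subspace of $V={W}^{k}(\g,\theta)$.

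First I would dispatch assertions (2) and (3), which take place inside ${W}^{k}(\g,\theta)$, where $\vac\ne0$. By \eqref{formG1}, (3) says $\beta(u,v)p(k)=0$ for all $u,v$, which by nondegeneracy of $\beta$ is equivalent to $p(k)=0$; (2) says $\beta(\cdot,v)p(k)\equiv0$ for some fixed $v\ne0$, which again by nondegeneracy forces $p(k)=0$, while conversely $p(k)=0$ yields (3), hence (2). Thus $(2)\Leftrightarrow(3)\Leftrightarrow p(k)=0$. The implication $(4)\Rightarrow(1)$ is trivial since $\g_{-1/2}\ne\{0\}$.

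The remaining content---linking the vanishing of $G^{\{v\}}$ in the simple quotient ${W}_{k}(\g,\theta)$ to $p(k)=0$---is the heart of the matter, and here I would bring in the invariant bilinear form. Since ${W}^{k}(\g,\theta)$ is a conformal vertex algebra with $V[0]=\C\vac$ and $V[1]=\mathrm{span}\{J^{\{a\}}\}$ consisting of primary vectors (so $L_1$ kills $V[1]$), it carries a unique invariant bilinear form $\langle\cdot,\cdot\rangle$ with $\langle\vac,\vac\rangle=1$, whose radical coincides with the maximal proper ideal; hence ${W}_{k}(\g,\theta)={W}^{k}(\g,\theta)/\mathrm{rad}\langle\cdot,\cdot\rangle$. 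Because $\langle\cdot,\cdot\rangle$ pairs distinct conformal-weight spaces to zero and, by free generation (Theorem \ref{kac}), $V[3/2]=\mathrm{span}\{G^{\{w\}}:w\in\g_{-1/2}\}$, the functional $\langle G^{\{v\}},\cdot\rangle$ is supported on weight $3/2$, where invariance identifies it with a nonzero multiple of the $\vac$-coefficient of ${G^{\{v\}}}_{(2)}{G^{\{w\}}}$; that is, $\langle G^{\{v\}},G^{\{w\}}\rangle=c\,\beta(v,w)p(k)$ with $c\ne0$. Now $G^{\{v\}}=0$ in ${W}_{k}(\g,\theta)$ iff $G^{\{v\}}\in\mathrm{rad}\langle\cdot,\cdot\rangle$ iff $\beta(v,\cdot)p(k)\equiv0$. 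For (1), with some $v\ne0$, nondegeneracy forces $p(k)=0$; and if $p(k)=0$ then the pairing vanishes for every $v$, giving (4). Combined with the second paragraph, all of (1)--(4) are equivalent to $p(k)=0$, which is exactly the final assertion.

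The main obstacle is the step $p(k)=0\Rightarrow(4)$. The naive approach---showing that each $G^{\{v\}}$ is a singular vector and hence generates a proper ideal---fails. When $p(k)=0$ the positive-index $J$- and $\omega$-modes do annihilate $G^{\{v\}}$ (by \eqref{JG} and primality) and so does ${G^{\{u\}}}_{(2)}{G^{\{v\}}}$ (by \eqref{formG1}); but the first product ${G^{\{u\}}}_{(1)}{G^{\{v\}}}=4\sum_i\tfrac{p(k)}{k_i}J^{\{([[e_\theta,u],v])^\natural_i\}}$ of \eqref{result} need not vanish in ${W}^{k}(\g,\theta)$, since the quotient polynomial $p(k)/k_i$ can be nonzero at a root of $p$ (namely at a root of $k_i$, where the complementary factor survives). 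Thus $G^{\{v\}}$ is in general not singular, which is precisely why I would argue through the radical of the invariant form rather than through singular vectors, the only relevant datum then being the weight-$3/2$ pairing $c\,\beta(\cdot,\cdot)p(k)$. The one point requiring care is justifying that the maximal ideal equals the radical and that the form on $V[3/2]$ is the asserted nonzero multiple of $\beta(\cdot,\cdot)p(k)$; both follow in a standard way once the primality of the weight-one generators and formula \eqref{formG1} are in hand.
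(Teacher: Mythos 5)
Your proposal is correct, but it takes a genuinely different route from the paper's. The two agree on the easy parts: items (2) and (3) live in ${W}^{k}(\g,\theta)$, where \eqref{formG1} together with nondegeneracy of $(e_\theta|[\cdot,\cdot])$ immediately gives their equivalence with $p(k)=0$. The difference is in the hard step, relating vanishing of $G^{\{v\}}$ in the simple quotient to $p(k)=0$. The paper stays inside the $\lambda$-bracket formalism and argues by cases: if all products ${G^{\{v\}}}_{(i)}G^{\{u\}}$, $i>0$, vanish, then $G^{\{u\}}$ is a singular vector, hence lies in the maximal proper ideal; if some first product survives, the paper shows (pairing with $J$-modes as in the proof of \eqref{result}) that the relevant component has $k_j=0$, deduces that the currents $J^{\{a\}}$, $a\in\g^\natural_j$, are singular, and then uses the fact, read off from Tables 1--3, that every component of $\g^\natural$ acts with nonzero weight on a highest weight vector of $\g_{-1/2}$, to push $G^{\{u\}}$ itself into the maximal proper ideal. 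You correctly diagnose why the naive singular-vector argument breaks down (in \eqref{result} the quotient $p(k)/k_i$ can be nonzero at a root of $p$), and you bypass the entire case analysis by invoking the invariant bilinear form: its radical is the maximal proper ideal, and invariance plus primality of $G^{\{v\}}$ identify the pairing on the weight-$3/2$ space with $c\,(e_\theta|[v,w])\,p(k)$, $c\neq0$, which yields at once that $G^{\{v\}}=0$ in ${W}_{k}(\g,\theta)$ if and only if $(e_\theta|[v,\cdot])\,p(k)\equiv0$. Your route is shorter, uniform in $\g$, and needs neither the tables nor the structure of the $\g^\natural$-action; what it costs is an external input: existence and uniqueness of the invariant form, and the identification of its radical with the maximal proper ideal (Li's theorem, using $V[0]=\C\vac$ and $L_1V[1]=0$), are standard for $\Z$-graded vertex operator algebras, whereas ${W}^{k}(\g,\theta)$ is a $\tfrac{1}{2}\Z$-graded vertex superalgebra, so the extension to this setting (the choice of $(-z^{-2})^{L_0}$ on half-integer weights, super signs) should be cited or proved rather than asserted to be standard. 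You also implicitly use Theorem \ref{kac}(a) to know that the weight-$3/2$ subspace is exactly $\mathrm{span}\{G^{\{w\}}\mid w\in\g_{-1/2}\}$, and the gradedness and two-sidedness of the maximal proper ideal (the latter noted in the paper); both are fine. Modulo supplying that reference, your argument is complete and arguably more conceptual than the paper's.
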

\begin{proof}
(1) $\Rightarrow$ (2): if $G^{\{v\}}=0$ in ${W}_{k}(\g, \theta)$ for some $v\in\g_{-1/2}$, then, by Lemma \ref{formG}, $(e_\theta|[u,v])p(k)=0$ for all $u\in\g_{-1/2}$, thus ${G^{\{u\}}}_{(2)}{G^{\{v\}}}=0$ in ${W}^{k}(\g, \theta)$ for all $u\in \g_{-1/2}$. 

(2) $\Rightarrow$ (3):
since $(e_\theta|[\cdot,\cdot])$ is nondegenerate, if (2) holds, then $p(k)=0$ so ${G^{\{v\}}}_{(2)}{G^{\{u\}}}=0$ in ${W}^{k}(\g, \theta)$ for all $v,u\in \g_{-1/2}$.

(3) $\Rightarrow$ (4): assume first ${G^{\{v\}}}_{(i)}G^{\{u\}}=0$ for all $v\in\g_{-1/2}$ and $i> 0$. Then, since ${J^{\{a\}}}_{(i)}G^{\{u\}}=0$ for all $a\in\g^\natural$ and $i>0$, we have that $G^{\{u\}}$ is a singular vector in ${W}^{k}(\g, \theta)$, so $G^{\{u\}}$ is in the maximal proper ideal of ${W}^{k}(\g, \theta)$. 

If, otherwise, there is   $v$ in $\g_{-1/2}$ such that ${G^{\{v\}}}_{(i)}G^{\{u\}}\ne 0$ for some $i>0$, then,  since we are assuming \eqref{u(1)v=0},  we have ${G^{\{v\}}}_{(1)}G^{\{u\}}\ne 0$. By \eqref{GGsimplified} we know that ${G^{\{v\}}}_{(1)}G^{\{u\}}=\sum_i J^{\{a_i\}}$ with $a_i\in\g^\natural_i$. In particular $\g^\natural\ne \{0\}$. Assume $a_j\ne0$. 
Observe that ${J^{\{a\}}}_{(1)}({G^{\{v\}}}_{(1)}G^{\{u\}})={G^{\{[a,v]\}}}_{(2)}G^{\{u\}}=0$ for all $a\in\g^\natural$. Choosing $b_j\in\g^\natural_j$ such that $(b_j|a_j)=1$ and arguing as in the proof of \eqref{result} in Lemma \ref{formG} we see that
$$0={J^{\{b_j\}}}_{(1)}({G^{\{v\}}}_{(1)}G^{\{u\}})={J^{\{b_j\}}}_{(1)} \left(\sum_i J^{\{a_i\}}\right)=k_j\vac,
$$
hence ${J^{\{a\}}}_{(1)}J^{\{a_j\}}=0$ for all $a\in\g^\natural$.  
Since, by skew-symmetry, 
$${G^{\{w\}}}_{(i)}J^{\{a_j\}}=-\delta_{i,0}{J^{\{a_j\}}}_{(0)}G^{\{w\}},\quad(i\ge 0),$$ we see that $J^{\{a_j\}}$ is a singular vector in ${W}^{k}(\g, \theta)$, thus $J^{\{a_j\}}$ belongs to the maximal proper ideal of ${W}^{k}(\g, \theta)$. Since $\g^\natural_j$ is simple if $j>0$ and $\g^\natural_0$ has at most dimension one, we see that, if $a_j\ne 0$ then $J^{\{a\}}$ is in the maximal proper ideal of ${W}^{k}(\g, \theta)$ for all $a\in\g^\natural_j$. 

Assume that  $u$ is a highest weight weight vector of weight $\a$ for an irreducible component of $\g_{-1/2}$. 
 By looking at Tables 1--3, we see that  $\g^\natural$ does not act trivially on $\g_{1/2}$, hence there is $h\in\h^\natural$ such that $[h,u]=\a(h)u$ with $\a(h)\ne 0$. Since ${J^{\{h\}}}_{(0)} G^{\{u\}}=\a(h) G^{\{u\}}$ is   in the maximal proper ideal of ${W}^{k}(\g, \theta)$, it follows that $G^{\{u\}}$ is zero in ${W}_{k}(\g, \theta)$. By the same argument it is clear that, if the highest weight vector of a component of $\g_{-1/2}$ is in a proper ideal, then the whole component is, thus $G^{\{u\}}=0$ in ${W}_{k}(\g, \theta)$ for all $u\in\g_{-1/2}$.

(4) $\Rightarrow$ (1): obvious.
 \end{proof}


\begin{theorem}\label{TT}  ${W}_{k}(\g, \theta)=\mathcal{V}_{k}(\g^\natural)$,  i.e., $k$  is a collapsing level,  if and only if 
 $p(k)=0$. In such cases, \begin{equation}\label{collapse}
{W}_{k}(\g, \theta)=\bigotimes_{i\in I: k_i\ne0}V_{k_i}(\g^\natural_i).
\end{equation}
\end{theorem}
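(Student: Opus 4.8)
The plan is to prove the two implications of the equivalence separately and then to extract the tensor decomposition \eqref{collapse} from the resulting equality $W_k(\g,\theta)=\mathcal{V}_k(\g^\natural)$. Both implications run through Proposition \ref{g2g}, which already translates the condition $p(k)=0$ into the vanishing of all the fields $G^{\{v\}}$ in the simple quotient.

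For the forward implication, suppose $k$ is collapsing, so that $W_k(\g,\theta)=\mathcal{V}_k(\g^\natural)$. Since $\mathcal{V}^k(\g^\natural)$ is generated by the weight-one fields $J^{\{a\}}$, every element of $\mathcal{V}_k(\g^\natural)$ has integral conformal weight (normally ordered product is additive in weight and $\partial$ raises it by one); hence the homogeneous component of $W_k(\g,\theta)$ of conformal weight $3/2$ vanishes. The quotient map $W^k(\g,\theta)\to W_k(\g,\theta)$ preserves conformal weight, and by Theorem \ref{kac}(b) each $G^{\{v\}}$ has conformal weight $3/2$, so its image lies in the zero component and thus $G^{\{v\}}=0$ in $W_k(\g,\theta)$ for all $v\in\g_{-1/2}$. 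Proposition \ref{g2g} then forces $p(k)=0$.

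For the reverse implication, assume $p(k)=0$. By Proposition \ref{g2g} we again have $G^{\{v\}}=0$ in $W_k(\g,\theta)$ for all $v\in\g_{-1/2}$, hence every product ${G^{\{u\}}}_{(n)}G^{\{v\}}$ vanishes there. Reading off the $\lambda^0$-coefficient of \eqref{GGsimplified}, i.e.\ ${G^{\{u\}}}_{(0)}G^{\{v\}}$, and choosing $u,v$ with $(e_\theta|[u,v])\ne0$ (possible since $(e_\theta|[\cdot,\cdot])$ is nondegenerate on $\g_{-1/2}$), the coefficient $-2(k+h^\vee)(e_\theta|[u,v])$ of $\omega$ is nonzero because $k\ne-h^\vee$. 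Solving the resulting relation in $W_k(\g,\theta)$ expresses $\omega$ as a combination of normally ordered products and derivatives of the $J^{\{a\}}$, so $\omega\in\mathcal{V}_k(\g^\natural)$. As $W_k(\g,\theta)$ is generated by the images of $J^{\{a\}}$, $G^{\{v\}}$ and $\omega$, and all three now lie in $\mathcal{V}_k(\g^\natural)$, we conclude $W_k(\g,\theta)=\mathcal{V}_k(\g^\natural)$, i.e.\ $k$ is collapsing.

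Finally, to obtain \eqref{collapse} I would invoke the identification $\mathcal{V}_k(\g^\natural)\simeq\bigotimes_{i\in I}\mathcal{V}_{k_i}(\g^\natural_i)$, where each $\mathcal{V}_{k_i}(\g^\natural_i)$ is a graded quotient of $V^{k_i}(\g^\natural_i)$. Because $W_k(\g,\theta)$ is simple and now equals this tensor product, no factor can carry a nonzero proper graded ideal (such an ideal, tensored with the vacuum spaces of the remaining factors, would give a proper ideal of the whole product); hence each $\mathcal{V}_{k_i}(\g^\natural_i)$ is simple and therefore coincides with the unique simple graded quotient $V_{k_i}(\g^\natural_i)$. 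It then remains to note that $V_0(\g^\natural_i)=\C\vac$ whenever $k_i=0$ (at level zero the maximal graded ideal of $V^{0}(\g^\natural_i)$ is the augmentation ideal, for both a basic simple Lie superalgebra component and the one-dimensional center), so these factors contribute only the vacuum and drop out of the tensor product, yielding \eqref{collapse}. I expect the main obstacle to be precisely this last paragraph: passing from the bare equality $W_k(\g,\theta)=\mathcal{V}_k(\g^\natural)$ to the sharp decomposition requires both the simplicity bookkeeping for tensor products and the explicit collapse of the level-zero factors, the latter being made transparent by the argument already used in the proof of Proposition \ref{g2g}, where the currents of a component $\g^\natural_j$ with $k_j=0$ were shown to lie in the maximal ideal.
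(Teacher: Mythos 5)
Your proof is correct and follows essentially the same route as the paper: both directions go through Proposition \ref{g2g} (integral weights of $\mathcal{V}_k(\g^\natural)$ versus the weight $3/2$ of $G^{\{v\}}$ for the forward direction, and solving \eqref{GGsimplified} for $\omega$ using $k\ne -h^\vee$ for the converse), and the decomposition \eqref{collapse} is deduced from the simplicity of $W_k(\g,\theta)$ exactly as in the paper, where it is phrased as $\mathcal{V}_k(\g^\natural)$ being the unique simple graded quotient of $\mathcal{V}^k(\g^\natural)$, together with $V_0(\g^\natural_i)=\C\vac$. Your factor-by-factor simplicity argument is just a slightly more explicit version of that same step (note only that a proper ideal $J_i$ of one factor yields a proper ideal of the tensor product after tensoring with the \emph{full} remaining factors, not merely their vacuum spaces).
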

\begin{rem}
Notice that for some $i$, it may hold $k_i+h^\vee_i=0$. Since we are assuming $k+h^\vee\ne0$, we still have the Virasoro vector $\omega$ acting on $\mathcal V^k(\g^\natural)$. Since $a\in\g^\natural$ has conformal weight $1$, $\omega_0$ acts on $\mathcal V^k(\g^\natural)$ as the Hamiltonian operator $H$. 
\end{rem}
\begin{proof}[Proof of Theorem \ref{TT}] Assume ${W}_{k}(\g, \theta)=\mathcal{V}_{k}(\g^\natural)$. Then, for $v\in \g_{-1/2}$, $G^{\{v\}}$ belongs to $\mathcal{V}_{k}(\g^\natural)$. Since conformal weights in  $\mathcal{V}_{k}(\g^\natural)$ are integral  and the conformal weight of $G^{\{v\}}$ is $3/2$,  we have $G^{\{v\}}=0$ and,  by Proposition  \ref{g2g}, $p(k)=0$. Conversely, if $p(k)=0$, by 
 Proposition \ref{g2g} all $G^{\{v\}}$ are $0$.  By \eqref{GGsimplified}, we have that $\omega\in \mathcal V_k(\g^\natural)$, hence  ${W}_{k}(\g, \theta)=\mathcal{V}_{k}(\g^\natural)$. \par
 We now prove \eqref{collapse}. Let $\mathcal I$ be the maximal proper ideal of $W^k(\g,\theta)$. Since $\omega_0$ acts as $H$ on  $\mathcal V^k(\g^\natural)$, we have that $\mathcal I\cap\mathcal V^k(\g^\natural)$ is a graded proper ideal of $\mathcal V^k(\g^\natural)$. 
If  $p(k)=0$, since ${W}_{k}(\g, \theta)$ is simple, we have that $\mathcal V_k(\g^\natural)$ is the simple graded quotient of $\mathcal V^k(\g^\natural)$,  hence
$$\mathcal V_k(\g^\natural)\simeq \bigotimes_{i\in I}V_{k_i}(\g_i^\natural).
$$
If $k_i=0$, then $V_{k_i}(\g^\natural_i)=\C\vac$, so 
the right hand side is the same as in \eqref{collapse}.
\end{proof}

\begin{rem} \label{ArM}
We note that in the case  when $\g$ is   of type $D_n$, Theorem \ref{TT} implies that
$${W}_{-2}(\g) = V_{n-4} (sl(2)). $$
Since it is well-known that $V_{n-4} (sl(2))$ is a rational vertex algebra for $n \ge 5$, we slightly refine \cite[Theorem 1.2 (2)]{AM} in this case.
\end{rem}

In the following result we determine the levels $k$ for which ${W}_{k}(\g, \theta)=\C\vac$. The Lie algebra case has been already discussed in \cite[Theorem 7.2]{AM}.
\begin{prop}\label{ArakawaMoreau}
${W}_{k}(\g, \theta)=\C\vac$ if and only if 
\begin{enumerate}
\item \label{CDeligne}$k=-\frac{h^\vee}{6}-1$ and either $\g$ is  a Lie algebra  belonging to the Deligne's series (i.e. $\g$ is of type $A_2,G_2,D_4,F_4,E_6,E_7,E_8$, \cite{D}) or $\g=psl(m|m)$ ($m\ge2$), $\g=osp(n+8|8)$ ($n\ge 2$), $\g$ is of type $F(4)$, $G(3)$ (for both choices of $\theta$), or $\g=spo(2|1)$;
\item\label{Cspo}\label{Csp(2|1)} $k=-\frac{1}{2}$ and  $\g=spo(n|m)\ (n\geq 1)$.
\end{enumerate} 
\end{prop}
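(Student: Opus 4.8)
The plan is to combine Theorem~\ref{TT} with an analysis of when the affine tensor product in \eqref{collapse} collapses all the way down to $\C\vac$. The starting point is the elementary observation that, for each component, $V_{k_i}(\g^\natural_i)=\C\vac$ if and only if $k_i=0$. For the reverse implication, at $k_i=0$ the current $J^{\{a\}}=a_{(-1)}\vac$ is a singular vector: a direct check with the commutator formula at level $0$ shows $J^{\{b\}}_{(n)}a_{(-1)}\vac=0$ for all $n\ge 1$, since $[b,a]_{(n-1)}\vac=0$ and the central term carries the factor $k_i$. Hence the weight-one space of the simple quotient vanishes, and since $\g^\natural_i$ is either simple or the one-dimensional centre this forces $V_0(\g^\natural_i)=\C\vac$. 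For the forward implication, when $k_i\ne 0$ the restricted form $k_i(\cdot|\cdot)$ is nondegenerate, so a proper graded ideal cannot meet the weight-one space: otherwise, by simplicity it would contain some $a$ with $(a|a)\ne 0$, and then $a_{(1)}a=k_i(a|a)\vac$ would put $\vac$ into the ideal, a contradiction. Thus the image of $\g^\natural_i$ survives and $V_{k_i}(\g^\natural_i)\ne\C\vac$.

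Granting this, Theorem~\ref{TT} yields the key reduction: $W_k(\g,\theta)=\C\vac$ if and only if $p(k)=0$ and every factor appearing in \eqref{collapse} is trivial, i.e.\ if and only if $k_i=0$ for all $i\in I$ (the simultaneous vanishing of all $k_i$ already forces $p(k)=0$, since each $k_i$ divides $p$). When $\g^\natural=\{0\}$, that is $\g=spo(2|1)$, the product is empty and the condition is simply $p(k)=0$. Otherwise, because by \eqref{ki} each $k_i=k+\tfrac12(h^\vee-h^\vee_{0,i})$ is linear in $k$, the $k_i$ can vanish simultaneously only when all the dual Coxeter numbers $h^\vee_{0,i}$ coincide, say with common value $h^\vee_0$, in which case they all vanish exactly at
\[
k=-\tfrac12\bigl(h^\vee-h^\vee_0\bigr).
\]
Equivalently, by Lemma~\ref{formG} a two-element set $\{k_i\}$ gives $p(k)=k_{i_1}k_{i_2}$ with distinct roots and hence never produces $\C\vac$, so the surviving candidates are exactly the cases in which $\g^\natural$ is a single simple ideal or a single centre, together with the multi-component cases whose $p(k)$ is a perfect square.

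It then remains to read off, from Tables~1--3 and the values $h^\vee_{0,i}$ in Table~4 of \cite{KW} (equivalently from the explicit factorizations of $p(k)$ in Table~4 above), precisely those $(\g,\theta)$ for which all $h^\vee_{0,i}$ agree, and to evaluate $k=-\tfrac12(h^\vee-h^\vee_0)$. The relation $h^\vee_0=h^\vee-1$ yields $k=-\tfrac12$ and should single out the family $\g=spo(n|m)$, where passing to $\g^\natural=spo(n-2|m)$ lowers the dual Coxeter number by exactly one; the relation $h^\vee_0=\tfrac23 h^\vee-2$ yields $k=-\tfrac{h^\vee}{6}-1$ and should single out the Deligne-series simple Lie algebras together with their super-analogues ($psl(m|m)$, the relevant $osp$ family, $F(4)$ and $G(3)$ for both choices of $\theta$, and $spo(2|1)$). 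In each case the stated factorization of $p(k)$ — with one root equal to the common value $-\tfrac12\bigl(h^\vee-h^\vee_0\bigr)$ — can be confirmed directly against Table~4; for the multi-component instances, such as $\g=so(8)$ of type $D_4$ where triality makes the three copies of $sl(2)$ share the same $h^\vee_{0,i}$, one checks that $p(k)$ is indeed a perfect square so that all $k_i$ vanish together.

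The genuinely laborious step, and the main obstacle, is precisely this final table-by-table verification: one must compute the restricted dual Coxeter numbers of every component, decide exactly when they all coincide, and handle the low-rank coincidences (for instance $so(4)=sl(2)\oplus sl(2)$, the double occurrence of $spo(2|1)$ in both lists via the two roots of its $p$, and the multi-component Deligne cases). Everything preceding it is the formal reduction above, which isolates the arithmetic condition $h^\vee_{0,i}\equiv h^\vee_0$ and the value $k=-\tfrac12\bigl(h^\vee-h^\vee_0\bigr)$ that must then be matched against the classification data.
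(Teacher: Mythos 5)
Your proposal is correct and follows essentially the same route as the paper's proof: reduce via Theorem \ref{TT} to the condition that $k_i=0$ for all $i\in I$ (treating $\g=spo(2|1)$, where $\g^\natural=\{0\}$, separately via $p(k)=0$ alone), observe that simultaneous vanishing of the $k_i$ forces all $h^\vee_{0,i}$ to equal a common value $h^\vee_0$ with $k=-\tfrac12\left(h^\vee-h^\vee_0\right)$, and then match the two relations $h^\vee_0=\tfrac23 h^\vee-2$ and $h^\vee_0=h^\vee-1$ against the tables to obtain $k=-\tfrac{h^\vee}{6}-1$ and $k=-\tfrac12$ respectively. The only differences are cosmetic: you supply an explicit proof that $V_{k_i}(\g^\natural_i)=\C\vac$ if and only if $k_i=0$, a fact the paper uses implicitly, while (like the paper) you leave the case-by-case table inspection asserted rather than displayed.
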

\begin{proof}By Theorem \ref{TT}, ${W}_{k}(\g, \theta)=\C\vac$ if and only if  $k$ is a collapsing level and $k_i=0$  for all $i$. If $\g=spo(2|1)$, then the latter condition is empty ($\g^\natural=\{0\}$) so, in this special case, ${W}_{k}(\g, \theta)=\C\vac$ if and only if   $p(k)=0$. This proves  that both $-\frac{h^\vee}{6}-1$ and $-\half$ are collapsing levels for $spo(2|1)$. 

If $\g\ne \{0\}$, then $k_i=0$  for all $i$ if and only if $h^\vee_{0,i}$ have the same value for all $i$ (denoted by $h^\vee_0$). A direct inspection, using Table 4 from \cite{KW},  shows that this happens only in the cases listed in \eqref{CDeligne}, \eqref{Cspo} above. Moreover,  we must have $k_i=k+\frac{h^\vee-h_{0}^\vee}{2}=0$, so 
\begin{equation}\label{ki=0}
k=-\frac{h^\vee-h_{0}^\vee}{2}. 
\end{equation} For the cases listed in \eqref{CDeligne}, we checked case by case that 
\begin{equation}\label{delignehzero}h_{0}^\vee=\frac{2}{3}h^\vee-2
\end{equation} while for $\g=spo(n|m)$, 
\begin{equation}\label{spohzero}
h_{0}^\vee=h^\vee-1.
\end{equation} Plugging in \eqref{ki=0}, we obtain the values of $k$ in (1) and (2) . \end{proof}
In the following Proposition we obtain certain uniform formulas for $\sdim \g$ and $p(k)$ (see \cite{D}, \cite{Wes}, \cite{Kawa}).
\begin{prop}\label{deligne} If $(\g,\theta)$ is as in Proposition \ref{ArakawaMoreau}  \eqref{CDeligne}, then 
\begin{equation}\label{sdimdeligne}\sdim\, \g=\frac{2 (h^\vee + 1) (5 h^\vee  - 6)}{h^\vee + 6},\quad p(k)=(k+\frac{h^\vee}{6}+1)(k+\frac{h^\vee}{3}).
\end{equation}
If  $\g=spo(n|m)$, then 
\begin{equation}\label{sdimspo}\sdim\,\g=(2h^\vee-1)(h^\vee-1),\quad p(k)=(k+\frac{1}{2})(k+\frac{h^\vee+1}{2}).
\end{equation}
\end{prop}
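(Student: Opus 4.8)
The plan is to prove the two factorizations separately, and within each to treat the formula for $p(k)$ and the formula for $\sdim\g$ by different means. The $p(k)$ formulas I would obtain simply by rewriting the entries already recorded in Table 4 in terms of $h^\vee$, using the values of $h^\vee_0$ computed in the proof of Proposition \ref{ArakawaMoreau}; this part is bookkeeping. The superdimension formulas are the substantive point: in the $spo(n|m)$ case a direct count suffices, while in the Deligne case I would read $\sdim\g$ off the coefficients of the (now known) polynomial $p(k)$ by means of the structural $\lambda$-bracket \eqref{GG}.

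Consider first $\g=spo(n|m)$. Writing $\g=osp(m|n)$, with $\g_{\bar0}=sp(n)\oplus so(m)$ and odd part of dimension $nm$, I would compute directly
$$\sdim\g=\tfrac{n(n+1)}{2}+\tfrac{m(m-1)}{2}-nm=\tfrac{(n-m)(n-m+1)}{2}.$$
By \eqref{spohzero} one has $h^\vee=\tfrac{1}{2}(n-m)+1$, that is $n-m=2h^\vee-2$; substituting gives $\sdim\g=(2h^\vee-1)(h^\vee-1)$, which is the first half of \eqref{sdimspo}. For $p(k)$ I would take the value $(k+\tfrac{1}{2})(k+\tfrac{n-m+4}{4})$ from Table 4 and rewrite the second factor as $\tfrac{n-m+4}{4}=\tfrac{2h^\vee+2}{4}=\tfrac{h^\vee+1}{2}$, which yields the asserted formula.

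Consider now the Deligne list of Proposition \ref{ArakawaMoreau}\eqref{CDeligne}. By Lemma \ref{formG} each $k_i=k+\tfrac{1}{2}(h^\vee-h^\vee_{0,i})$ divides the monic quadratic $p(k)$; since here all the $h^\vee_{0,i}$ coincide, with common value $h^\vee_0=\tfrac{2}{3}h^\vee-2$ by \eqref{delignehzero}, the number $k_i=k+\tfrac{h^\vee}{6}+1$ is a root of $p(k)$, so $p(k)=(k+\tfrac{h^\vee}{6}+1)(k+c)$. Comparing with the case-by-case entries of Table 4 identifies $c=\tfrac{h^\vee}{3}$, proving the $p(k)$ statement in \eqref{sdimdeligne}. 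To recover $\sdim\g$, I would equate the coefficient of $\lambda^2$ in \eqref{GG} with the identity ${G^{\{u\}}}_{(2)}{G^{\{v\}}}=4(e_\theta|[u,v])p(k)\vac$ of Lemma \ref{formG}. After dividing by $(e_\theta|[u,v])$ and inserting $c(\g,k)$ from \eqref{cgk}, this becomes a polynomial identity in $k$ in which $\sdim\g$ enters only through the summand $-k\,\sdim\g$ of $-(k+h^\vee)c(\g,k)$; matching the coefficient of $k$ then expresses $\sdim\g$ linearly in terms of $\sdim\g^\natural$ and of the $k$-linear part of the anomaly term $\sum_\gamma c'([u,u^\gamma]^\natural,[u_\gamma,v]^\natural)/(e_\theta|[u,v])$.

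The main obstacle is precisely this last computation. Carrying out the coefficient matching reduces the target formula $\sdim\g=\tfrac{2(h^\vee+1)(5h^\vee-6)}{h^\vee+6}$ to the single statement that the combination of $\sdim\g^\natural$ with the above anomaly coefficient equals the uniform expression $\tfrac{4h^\vee(2h^\vee-3)}{h^\vee+6}$. I do not expect to be able to check this combination in an entirely root-system-free way, since $\sum_\gamma c'$ depends on the $\g^\natural$-module structure of $\bigoplus_{i>0}\g_i$; I therefore anticipate a short case analysis along the list in Proposition \ref{ArakawaMoreau}\eqref{CDeligne}, which is consistent with the superdimension formula being only \emph{almost} uniform and with the treatments in \cite{D}, \cite{Wes}, \cite{Kawa}.
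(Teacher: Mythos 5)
Your $spo(n|m)$ half is correct and complete: the direct count $\sdim\g=\frac{(n-m)(n-m+1)}{2}$ together with $h^\vee=\frac{1}{2}(n-m)+1$ (note this value comes from Table 3, not from \eqref{spohzero}, which only records $h_0^\vee=h^\vee-1$) gives \eqref{sdimspo}, and the Table 4 rewriting gives $p(k)$ there. In the Deligne case, your identification of the factor $k+\frac{h^\vee}{6}+1$ of $p(k)$ via Lemma \ref{formG} plus comparison with Table 4 is also sound, since Table 4 is established before the proposition. The genuine gap is the Deligne superdimension formula, which you yourself single out as the substantive point: your coefficient-matching in \eqref{GG} correctly reduces it to the identity $\sdim\g^\natural+\frac{1}{(e_\theta|[u,v])}\sum_\gamma([u,u^\gamma]^\natural|[u_\gamma,v]^\natural)=\frac{4h^\vee(2h^\vee-3)}{h^\vee+6}$, but you then leave this unverified and merely anticipate a case analysis you do not carry out. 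That case analysis (evaluating the anomaly sum and $\sdim\g^\natural$ for $E_7$, $E_8$, $F(4)$, $G(3)$, $psl(m|m)$, $osp(n+8|8)$, \dots) is at least as laborious as reading $\sdim\g$ off dimension tables directly, so as written the proof is incomplete exactly where the proposition has content, and the "unified" character of the statement is lost.

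The idea you are missing is much simpler and makes no use of \eqref{GG} at all: by Proposition \ref{ArakawaMoreau}, at $k=-\frac{h^\vee}{6}-1$ (resp.\ $k=-\frac{1}{2}$) the simple quotient $W_k(\g,\theta)$ is $\C\vac$, so its conformal vector is zero and its central charge vanishes. Setting $c(\g,k)=0$ in \eqref{cgk} at that value of $k$ is a single equation, linear in $\sdim\g$, and solving it yields both superdimension formulas uniformly; this is the paper's proof. With $\sdim\g$ in hand, the paper then derives $p(k)$ uniformly (rather than from Table 4) using the formula of \cite[Remark]{KW2}, which applies here because all $h^\vee_{0,i}$ coincide, so $\kappa_0=2h_0^\vee(\cdot|\cdot)$; that formula is precisely the uniform evaluation of the anomaly sum that blocked you, expressing ${G^{\{u\}}}_{(2)}G^{\{v\}}$ through $c(\g,k)$ and $\sdim\g_0+\sdim\g_{1/2}$, which combined with $\sdim\g_0=\sdim\g-2\sdim\g_{1/2}-2$ and $\sdim\g_{1/2}=2h^\vee-4$ gives $p(k)$. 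Note the paper's logical order is the reverse of yours: $\sdim\g$ first (from the vanishing central charge), then $p(k)$; that reversal is what eliminates the case-by-case work. Alternatively, invoking the same \cite[Remark]{KW2} formula in place of raw \eqref{GG} would close the gap in your own order of argument.
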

\begin{proof}
If ${W}_{k}(\g, \theta)=\C\vac$, then $c(\g,k)=0$. Substituting in \eqref{cgk} the values of $k$ given in Proposition \ref{ArakawaMoreau}  and then solving for $\sdim\,\g$ in the equation $c(\g,k)=0$, one obtains  the formula for $\sdim\, \g$ given in \eqref{sdimdeligne} for the Lie superalgebras $\g$ listed in Proposition \ref{ArakawaMoreau}  \eqref{CDeligne} and the formula for $\sdim\,\g$ given in \eqref{sdimspo} for $\g=spo(n|m)$.

We have  
$\kappa_0(a,b)=2h^\vee_0(a|b)$ for $a,b\in\g^\natural$, hence we can apply
 \cite[Remark]{KW2} and obtain that
\begin{align*}{G^{\{u\}}}&_{(2)}G^{\{v\}}=\\&-\frac{2}{3}(e_\theta|[u,v])\{(k+h^\vee)c(\g,k)-(k+\frac{h^\vee-h^\vee_0}{2})(\sdim\,\g_0+\sdim\,\g_{1/2})\}\notag.
\end{align*}
Hence, by \eqref{formG1},  
\begin{equation}\label{cuv}
p(k)=-\frac{1}{6}\{(k+h^\vee)c(\g,k)-(k+\frac{h^\vee-h^\vee_0}{2})(\sdim\,\g_0+\sdim\,\g_{1/2})\}.
\end{equation}
  Since $\sdim\,\g_0=\sdim\g-2\sdim\,\g_{1/2}-2$ and $\sdim\g_{1/2}=2h^\vee-4$ \cite{KW}, plugging into \eqref{cuv} either \eqref{delignehzero} or \eqref{spohzero} one gets also our formulas for $p(k)$.
  \end{proof}
\begin{rem} Note also that for $\g=sl(m|n)$ ($m\ne n$) and for $\g=osp(m|n)$ one has respectively 
\begin{align*} &\sdim\, \g={h^\vee}^2-1, &&p(k)= (k+1)(k+\frac{h^\vee}{2}),\\
&\sdim\, \g=\frac{(h^\vee+1)(h^\vee+2)}{2}, && p(k)=(k+2)(k+\frac{h^\vee-2}{2}).
\end{align*}
\end{rem}
\vskip10pt
 Let us conclude this Section with one application of our results on collapsing levels. Let $V_c$ denote the simple Virasoro vertex algebra of central charge $c$. Let $M(1)$ be the Heisenberg vertex algebra of central charge $1$. \par
Recall from Section \ref{uno} that  $H_{e_{-\theta}}$ denotes  the quantum reduction functor  from the category of $V^{k}(\g)$--modules to the category of ${W}^{k}(\g, \theta)$--modules, and that 
\begin{equation}\label{wkgt} H_{e_{-\theta}}( V^k (\g) ) =  {W}^{k}(\g, \theta). \end{equation}
It was proved in \cite{Araduke} that $H_{e_{-\theta}}$ is an  exact functor and it  maps an irreducible highest weight $V^k(\g)$--modules $M$ either to zero or to an irreducible highest weight $W^{k} (\g, \theta)$--module;  furthermore, one has that, if $M$ is not integrable, then $H_{e_{-\theta}}(M)$ is nonzero. This implies that, if $\g$ is a Lie algebra and $k \notin {\Z}_{\ge 0}$, then
 $$  H_{e_{-\theta}} ( V_k (\g) ) =  {W}_{k}(\g, \theta).  $$

 \begin{prop}\ 
 \begin{enumerate}
 \item \label{functor1}Let $\g =sl(2m)$. Then there is an exact functor from the category of $V_{-m}(\g)$--modules to the category of $V_{c=1}$--modules which maps  $V_{-m}(\g)$ to $V_{c=1}$.
\item\label{functor2} Let $\g = sp(2n)$. Then there is an exact functor from the category of $V_{-1 - \frac{n}{2}}(\g)$--modules to the category of $V_{c=-2}$--modules which maps  $V_{-1 - \frac{n}{2}}(\g)$ to $V_{c=-2}$.
\item \label{functor3} Let $\g = so(2n)$ with $n$ odd,  $n \ge 3$. Then there is an exact functor from the category of $V_{- (n-2)}(\g)$--modules to the category of $M(1) $--modules which maps  
$V_{- (n-2)}(\g)$ to $M(1)$.
\end{enumerate}
 \end{prop}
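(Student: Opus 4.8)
The plan is to obtain each functor as a composition of minimal quantum Hamiltonian reductions $H_{e_{-\theta}}$, using the two facts recalled just before the statement: $H_{e_{-\theta}}$ is exact, and, because every level occurring below is negative (hence not in $\Z_{\ge 0}$), one has $H_{e_{-\theta}}(V_k(\g))=W_k(\g,\theta)$ by \cite{Araduke}. The starting point is that each of the three levels is collapsing: from Table 4, $p(k)=(k+1)(k+m)$ vanishes at $k=-m$ for $\g=sl(2m)$, $p(k)=(k+\tfrac12)(k+\tfrac{n+2}{2})$ vanishes at $k=-1-\tfrac n2$ for $\g=sp(2n)$, and $p(k)=(k+2)(k+n-2)$ vanishes at $k=-(n-2)$ for $\g=so(2n)$. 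Hence Theorem \ref{TT} applies and, by \eqref{collapse}, $W_k(\g,\theta)$ equals the tensor product of the $V_{k_i}(\g^\natural_i)$ over the components with $k_i\ne0$.

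Next I would identify this surviving tensor product, computing the $k_i$ from \eqref{ki} with the dual Coxeter numbers $h^\vee_{0,i}$ read off the standard realizations under the normalization $(\theta|\theta)=2$. For $\g=sl(2m)$ the centre of $\g^\natural=gl(2m-2)$ has $k_0=k+\tfrac{h^\vee}{2}=0$ and is discarded, while the ideal $sl(2m-2)$ survives at $k_1=k+1$, so $W_{-m}(sl(2m),\theta)=V_{-(m-1)}(sl(2(m-1)))$. For $\g=sp(2n)$ the ideal $\g^\natural=sp(2n-2)$ is simple and survives at $k_1=k+\tfrac12$, so $W_{-1-n/2}(sp(2n),\theta)=V_{-1-(n-1)/2}(sp(2(n-1)))$. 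For $\g=so(2n)$ the summand $sl(2)$ of $\g^\natural=sl(2)\oplus so(2n-4)$ has $k_{sl(2)}=k+(n-2)=0$ and is discarded, whereas $so(2n-4)$ survives at $k+2$, so $W_{-(n-2)}(so(2n),\theta)=V_{-((n-2)-2)}(so(2(n-2)))$. In each case the output is the same family with the rank lowered, so $H_{e_{-\theta}}$ can be iterated; note that in the $so$ case the rank drops by $2$ and the parity of $n$ is preserved.

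Finally I would treat the endpoints and assemble the composition. For $sl(2m)$ the chain descends to $V_{-1}(sl(2))$; since $sl(2)$ is a Lie algebra and $-1\notin\Z_{\ge0}$, $H_{e_{-\theta}}(V_{-1}(sl(2)))=W_{-1}(sl(2),\theta)$, which is the simple Virasoro vertex algebra, of central charge $1$ by \eqref{cgk}. For $sp(2n)$ the chain descends to $V_{-3/2}(sp(2))=V_{-3/2}(sl(2))$, and $W_{-3/2}(sl(2),\theta)=V_{c=-2}$, again by \eqref{cgk}. For $\g=so(2n)$ with $n$ odd the chain descends to $V_{-1}(so(6))$; here the final collapse discards the $sl(2)$-summand and leaves the one-dimensional toral ideal $so(2)\cong\C$ of $\g^\natural=sl(2)\oplus so(2)$, surviving at level $k+2=1\ne0$, and the affine vertex algebra of a rank one torus at nonzero level is $M(1)$. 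Composing the exact functors $H_{e_{-\theta}}$ along each chain produces the three desired exact functors, carrying $V_{-m}(sl(2m))$, $V_{-1-n/2}(sp(2n))$, and $V_{-(n-2)}(so(2n))$ to $V_{c=1}$, $V_{c=-2}$, and $M(1)$ respectively.

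The main difficulty is bookkeeping rather than conceptual: one must determine, with the fixed normalization $(\theta|\theta)=2$, the dual Coxeter numbers $h^\vee_{0,i}$ of the ideals of $\g^\natural$, and hence decide exactly which component survives and at what level, the delicate point being to compare the restricted form with the intrinsic form of each simple ideal. A genuinely structural subtlety is the $sl(2)$ endpoints of the first two chains: $sl(2)$ is excluded from the general theory, yet $W^k(sl(2),\theta)$ is the universal Virasoro algebra, so the last reduction there is not a collapse but the passage to a Virasoro vertex algebra whose central charge must be computed from \eqref{cgk}. Likewise, in the $so$ case one must recognize that the terminal factor is a Heisenberg algebra, not a simple affine algebra, via $so(2)\cong\C$.
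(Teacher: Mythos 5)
Your proposal is correct and follows essentially the same route as the paper: iterate the exact reduction functor $H_{e_{-\theta}}$ along the chain of collapsing levels identified via Theorem~\ref{TT} and \eqref{collapse} (so that $W_{-m}(sl(2m),\theta)=V_{-m+1}(sl(2m-2))$, etc.), then treat the terminal $sl(2)$ (Virasoro, central charge computed from \eqref{cgk}) and Heisenberg cases directly. The only cosmetic difference is at the endpoint of the $so(2n)$ chain, where you compute $W_{-1}(so(6),\theta)=M(1)$ directly from $\g^\natural=sl(2)\oplus so(2)$, while the paper invokes the isomorphism $so(6)\cong sl(4)$ and the fact that $W_{-1}(sl(4),\theta)=M(1)$.
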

 \begin{proof}
\eqref{functor1} The case $m=1$ is clear. 
Assume that $m \ge 2$. First we notice that $k= -m$ is  a collapsing level and that $W_k(\g, \theta) = V_{-m +1} (sl(2m-2))$. Set $H_1=H_{e_{-\theta}}$. By \eqref{wkgt},  $H_1$ is an exact functor from  the category of $V_{-m} (sl(2m))$--modules to the category of $V_{-m +1} (sl(2m-2))$-modules.
By the  same argument we get an exact functor $H_2$  from  the category of $V_{-m+1} (sl(2m-2))$--modules to the category of $V_{-m +2} (sl(2m-4))$--modules. By repeating this procedure we get exact functors
$ H_i,\ i=1, \dots, m-1$ from  the category of $V_{-m-1+i} (sl(2m+2 -2i) )$--modules to the category of $V_{-m +i} (sl(2m-2i))$--modules.  Finally we use the  functor $H_{Vir}$ from the category of $V_{-1} (sl_2)$--modules to the category of $V_{c=1}$--modules.
Now the composition
$ H_{Vir} \circ H_{m-1} \circ \cdots \circ H_1$
gives the required exact functor.

\eqref{functor2} Since  $k$ is a collapsing level, we have $W_k(sp(2n), \theta) = V_{k+1/2} (sp(2n-2))$. But $k+1/2$ is a collapsing level for $sp(2n-2)$ and we finish  the proof as in  
case \eqref{functor1}.

\eqref{functor3} The proof  is similar to \eqref{functor1} and \eqref{functor2} and uses the fact that $W_{-1} (so(6), \theta)$ $=W_{-1} (sl(4), \theta) = M(1)$.
 \end{proof}
 Part \eqref{functor1} has been obtained by different methods in Arakawa-Moreau \cite{AM2}.
 \begin{rem}
  There is a different method for proving that a certain   level is collapsing. One can take a  non-trivial ideal $I$ in the  universal affine vertex algebra $V^k (\g)$ (usually generated by singular vector of conformal weight $2$), apply the BRST functor  $H_{e_{-\theta}}$,  and show that all  generators $G^{\{ u \} }$ of conformal weight $3/2$  of  $W^k (\g, \theta)$ lie in the non-trivial ideal  $H_{e_{-\theta}} (I)$ of 
  $W^k (\g, \theta)$. This will imply that $G^{\{ u \} } = 0$  in $ W_k (\g, \theta)$. Such approach was applied  in \cite{AM2} for $W$-algebras of type $A$, but can be also applied for all collapsing levels classified in our paper.  In particular, our results show that for every collapsing level there exists a non-trivial ideal $I$ in $V^k(\g)$ such that generators $G^{\{ u \} }$ lie in $H_{e_{-\theta}} (I)$.
  
   \end{rem}

\section{Conformal embeddings  of affine vertex algebras in W-algebras}\label{section4}
In this Section we introduce  a notion of a conformal embedding for $\mathcal V_{k}(\g^\natural)$ into ${W}_{k}(\g, \theta)$. Our first task is to endow $\mathcal V_{k}(\g^\natural)$ with a   Virasoro vector. In most cases this Virasoro   vector is the image in ${W}_{k}(\g, \theta)$ of the  Virasoro vector for  $\mathcal V^{k}(\g^\natural)$ given by the Sugawara construction.
On the other hand it might happen that $\mathcal V_{k}(\g^\natural)$ still has a Virasoro vector though Sugawara construction for  $\mathcal V^{k}(\g^\natural)$ does not make sense. This motivates the following definitions.\par
If $k_i+h^\vee_{0,i}\ne 0$,  then $\mathcal{V}^k(\g^\natural_i)$ is equipped with a Virasoro vector \begin{equation}\label{sug}
\omega_{sug}^i=\frac{1}{2(k_i+h_{0,i}^\vee)}\sum_{j=1}^{\dim \g^\natural_i} :J^{\{b^j_i\}}J^{\{ a_i^j\}} :,
\end{equation}
where $\{a^j_i\}$ is a basis of $\g^\natural_i$ and $\{b^j_i\}$ is its dual basis with respect to $(\cdot|\cdot)_{|\g^\natural_i\times \g^\natural_i}$.
If $k_i+h^\vee_{0,i}\ne 0$for all $i$,  we set
$$\omega_{sug}=\sum_{i\in I}\omega_{sug}^i.$$
Define
$$\mathcal K=\{k\in\C\mid k+ h^\vee\ne 0, k_i+h^\vee_{0,i}\ne 0 \text{ whenever $k_i\ne 0$}\}.$$
If $k\in \mathcal K$ we also set
$$\omega'_{sug}=\sum\limits_{i\in I:k_i\ne 0}\omega_{sug}^i.$$ 
Clearly, if $k_i\ne0$ for all $i\in I$ then $\omega_{sug}=\omega'_{sug}$.
Note that $\omega_{sug}=\omega'_{sug}$ in ${W}_{k}(\g, \theta)$ also when  $k_i+h^\vee_{0,i}\ne 0$ for all $i\in I$. Indeed, if there is $j \in I$ such that $k_{j}=0$, by Theorem \ref{TT}, $\mathcal V_{k_{j}}(\g^\natural_{j})$ collapses to $\C\vac$ so $\omega^{j}_{sug}$ collapses to zero. 
If instead $k_i=k_i+h^\vee_{0,i}=0$ then, by Theorem \ref{TT},   $\omega'_{sug}$ still defines a Virasoro vector for $\mathcal V_k(\g^\natural)$, but $\omega_{sug}$ is not well defined.

Finally, we define
$$
c_{sug}=
\text{ central charge of $\omega'_{sug}$.}$$

\begin{defi} Assume $k\in \mathcal K$. We say that $\mathcal V_k(\g^\natural)$ is conformally embedded in ${W}_{k}(\g, \theta)$ if $\omega'_{sug}=\omega$.
The level $k$ is called a {\sl conformal level}.
\end{defi}
The goal of this Section is to determine the conformal levels. An easy result in this direction is the following Proposition which explains why we replace $\omega_{sug}$  by   $\omega'_{sug}$.
\begin{prop} Assume that $k$ is a collapsing level in $\mathcal K$. Then $k$ is conformal.
\end{prop}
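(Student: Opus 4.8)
The plan is to show directly that when $k$ is a collapsing level in $\mathcal K$, the Virasoro vector $\omega'_{sug}$ coincides with $\omega$ in $W_k(\g,\theta)$. The key observation is that, by Theorem \ref{TT}, a collapsing level is characterized by $p(k)=0$, and at such a level $W_k(\g,\theta)=\mathcal V_k(\g^\natural)$; in particular the whole $W$-algebra has collapsed to its affine part, so there is only \emph{one} natural Virasoro vector available and the two candidates must agree. The cleanest route is therefore to extract, from the simplified bracket \eqref{GGsimplified}, the precise relation expressing $\omega$ in terms of the currents $J^{\{a\}}$ at a collapsing level.

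First I would recall that at a collapsing level we have $p(k)=0$, hence every term in \eqref{GGsimplified} carrying the factor $p(k)$ vanishes, and moreover, by Proposition \ref{g2g}, all $G^{\{v\}}=0$ in $W_k(\g,\theta)$. I would then isolate the coefficient of $\omega$ in \eqref{GGsimplified}, namely $-2(k+h^\vee)(e_\theta|[u,v])\omega$, together with the quadratic current term $(e_\theta|[u,v])\sum_\alpha :J^{\{a^\alpha\}}J^{\{a_\alpha\}}:$ and the double-sum term $\sum_\gamma :J^{\{[u,u^\gamma]^\natural\}}J^{\{[u_\gamma,v]^\natural\}}:$. Since the left-hand side ${G^{\{u\}}}_{\lambda}G^{\{v\}}$ vanishes identically in $W_k(\g,\theta)$ (all $G$'s being zero), setting the constant-in-$\lambda$ part of the right-hand side to zero yields an identity that solves for $\omega$ as a normally ordered quadratic expression in the currents. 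The main point is that this expression, after using that $(e_\theta|[u,v])$ is a nondegenerate form on $\g_{-1/2}$ so that $u,v$ can be chosen freely, reduces precisely to the Sugawara combination $\omega'_{sug}=\sum_{i:\,k_i\ne0}\frac{1}{2(k_i+h^\vee_{0,i})}\sum_j :J^{\{b^j_i\}}J^{\{a^j_i\}}:$, matching \eqref{sug}.

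The one genuine subtlety — and the step I expect to be the main obstacle — is the bookkeeping that turns the two current-bilinear terms of \eqref{GGsimplified} into the correct Sugawara normalization component by component. The double sum $\sum_\gamma :J^{\{[u,u^\gamma]^\natural\}}J^{\{[u_\gamma,v]^\natural\}}:$ must be shown to contribute, on each minimal ideal $\g^\natural_i$, exactly the factor needed so that the coefficient $\frac{1}{2(k_i+h^\vee_{0,i})}$ emerges; this requires using the relation $\kappa_0(a,b)=2h^\vee_0(a|b)$ restricted to each component (as invoked in the proof of Proposition \ref{deligne}) and the definition $k_i=k+\tfrac12(h^\vee-h^\vee_{0,i})$ from \eqref{ki}. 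Once the coefficient of $\omega$ is compared with the coefficient of each $:J^{\{b^j_i\}}J^{\{a^j_i\}}:$, the claim $\omega=\omega'_{sug}$ follows, and since $\omega$ is by construction the conformal vector of $W_k(\g,\theta)$, this is exactly the statement that $k$ is a conformal level. Care must also be taken that components with $k_i=0$ collapse to $\C\vac$ (by Theorem \ref{TT}), which is precisely why $\omega'_{sug}$ rather than $\omega_{sug}$ is the right object, consistent with the remark immediately preceding the definition of conformal embedding.
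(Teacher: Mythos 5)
There is a genuine gap in your argument, and it sits exactly where you flag it: the claim that the current-bilinear expression extracted from \eqref{GGsimplified} at $p(k)=0$ "reduces precisely to the Sugawara combination $\omega'_{sug}$" is asserted, not proven, and this identification is the entire mathematical content of your proof. The $\lambda^0$ part of \eqref{GGsimplified} does give, in ${W}_{k}(\g,\theta)$, the valid identity
$$2(k+h^\vee)(e_\theta|[u,v])\,\omega=(e_\theta|[u,v])\sum_{\alpha} :J^{\{a^\alpha\}}J^{\{a_\alpha\}}:+\sum_{\gamma}:J^{\{[u,u^{\gamma}]^\natural\}}J^{\{[u_\gamma,v]^\natural\}}:+2(k+1)\partial J^{\{[[e_\theta,u],v]^\natural\}},$$
but the right-hand side depends on $u,v$: the double sum is not a multiple of the Casimir-type sum for generic $u,v$ (its decomposition into invariant tensors on each $\g^\natural_i$ requires the detailed $\g^\natural$-module structure of $\g_{\pm 1/2}$, not just the componentwise trace identity $\kappa_0(a,b)=2h^\vee_{0,i}(a|b)$ and the definition \eqref{ki}), and the derivative term $2(k+1)\partial J^{\{[[e_\theta,u],v]^\natural\}}$ does not vanish for general $u,v$, whereas $\omega'_{sug}$ in \eqref{sug} contains no derivative terms. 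To make your route work one would have to, e.g., sum the identity over a basis of $\g_{-1/2}$ paired against its dual basis with respect to $(e_\theta|[\cdot,\cdot])$, show the summed derivative term vanishes (using self-duality of $\g_{-1/2}$), and compute the Dynkin-index coefficients with which $\g_{1/2}$ contributes the Casimir of each $\g^\natural_i$ -- none of which is routine bookkeeping. Also, your heuristic that after collapsing "there is only one natural Virasoro vector available" proves nothing: a vertex algebra can carry many Virasoro vectors; some further argument is needed.

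The paper's proof shows how to avoid this computation entirely, and you had all the needed ingredients in hand. At a collapsing level $\omega\in\mathcal V_k(\g^\natural)$ simply because ${W}_{k}(\g,\theta)=\mathcal V_k(\g^\natural)$. Both $\omega$ and $\omega'_{sug}$ make every current $J^{\{a\}}$ primary of conformal weight $1$ (for the surviving components this is the Sugawara construction, legitimate since $k\in\mathcal K$; the components with $k_i=0$ collapse to $\C\vac$ by Theorem \ref{TT}), so $[(\omega-\omega'_{sug})_\lambda J^{\{a\}}]=0$ for all $a\in\g^\natural$. By skew-symmetry, ${J^{\{a\}}}_{(n)}(\omega-\omega'_{sug})=0$ for $n>0$, so $\omega-\omega'_{sug}$ is a singular vector; it has conformal weight $2$, hence is not a nonzero multiple of $\vac$, and since $\mathcal V_k(\g^\natural)={W}_{k}(\g,\theta)$ is simple it must be zero. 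This two-line simplicity argument replaces all of the structure-constant analysis your plan would require; you should adopt it or carry out your computation in full.
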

\begin{proof} If $k$ is a collapsing level, then $\omega\in \mathcal{V}_{k}(\g^\natural)$. Since $J^{\{a\}}, a\in \g^\natural$ is primary of conformal weight $1$ for $\omega$, we see that $[(\omega-\omega'_{sug})_{\lambda}J^{\{a\}}]=0$ for all $a\in\g^\natural$. This implies that ${J^{\{a\}}}_{(n)}(\omega-\omega'_{sug})=0$ for $n>0$, so  $\omega-\omega'_{sug}$ is a singular vector. Since $\omega-\omega'_{sug}$ has conformal weight $2$ for $\omega$, we see that $\omega-\omega'_{sug}\ne\vac$, hence, since  $\mathcal{V}_{k}(\g^\natural)$ is simple, $\omega-\omega'_{sug}=0$. 
\end{proof}

\vskip5pt
We now start  looking for conformal non--collapsing levels. 
Set 
\begin{equation}\label{J} \mathcal{J}^{k} (\g) = {W}^{k}(\g, \theta)\cdot(\omega- \omega_{sug}),\quad  \mathcal {W}^{k}(\g, \theta)= {W}^{k}(\g, \theta)/ \mathcal{J}^{k} (\g).
\end{equation}

  We have:
\begin{prop}\label{T} Assume $k_i+h^\vee_{0,i}\ne0$ for all $i\in  I $. The following conditions are equivalent:
\begin{enumerate} 
\item \label{conf-level}  $(\omega_{sug})_0 G^{\{u\}}=\tfrac{3}{2}G^{\{u\}}$ in ${W}^{k}(\g, \theta)$ for all $u\in \g_{-1/2}$.
\item \label{conff1-gen}The vector $ \omega- \omega_{sug}$
is a singular vector in ${W}^{k}(\g, \theta)$.
\item  \label{non-triv}
$G^{\{v\}}\ne 0$ in $\mathcal {W}^{k}(\g, \theta)$ for all $v\in\g_{-1/2}$, hence $ \mathcal{J}^{k} (\g)$
is a proper ideal in ${W}^{k}(\g, \theta)$.
\end{enumerate}
\end{prop}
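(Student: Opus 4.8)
The plan is to set $\nu=\omega-\omega_{sug}$ (a vector of $\omega$-conformal weight $2$), write $L_n=\omega_{(n+1)}$ and $L^{sug}_n=(\omega_{sug})_{(n+1)}$, and prove the cycle $(1)\Rightarrow(2)\Rightarrow(3)\Rightarrow(1)$ after recording three preliminary facts. First, since both $\omega$ and $\omega_{sug}$ make every $J^{\{a\}}$ primary of conformal weight $1$, one has $[\nu_\lambda J^{\{a\}}]=0$, whence $J^{\{a\}}_{(n)}\nu=0$ for all $n\ge0$. Second, a routine coset computation (using that $\omega_{sug}$ is quasi-primary of weight $2$ with $\omega_{(3)}\omega_{sug}=\tfrac12 c_{sug}\vac$) gives $[\omega_\lambda\nu]=(\partial+2\lambda)\nu+\tfrac{\lambda^3}{12}(c(\g,k)-c_{sug})\vac$, so that $L_1\nu=0$, $L_n\nu=0$ for $n\ge3$, and $L_2\nu=\tfrac12(c(\g,k)-c_{sug})\vac$. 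Third, by \eqref{JG} the vector $G^{\{u\}}$ is annihilated by $J^{\{a\}}_{(n)}$ for $n\ge1$, hence is a highest-weight vector for $\omega_{sug}$: $L^{sug}_nG^{\{u\}}=0$ for $n\ge1$ and $L^{sug}_0G^{\{u\}}=h_uG^{\{u\}}$, where $h_u=\sum_i C_i(u)/\bigl(2(k_i+h^\vee_{0,i})\bigr)$ and $C_i(u)$ is the Casimir eigenvalue of $\g^\natural_i$ on the irreducible $\g^\natural$-component of $\g_{-1/2}$ containing $u$. Consequently $\nu_{(1)}G^{\{u\}}=(\tfrac32-h_u)G^{\{u\}}$, $\nu_{(n)}G^{\{u\}}=0$ for $n\ge2$, and by skew-symmetry $G^{\{u\}}_{(1)}\nu=(\tfrac32-h_u)G^{\{u\}}$, $G^{\{u\}}_{(n)}\nu=0$ for $n\ge2$. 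Condition (1) says precisely that $h_u=\tfrac32$ on every irreducible $\g^\natural$-summand of $\g_{-1/2}$.

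For $(1)\Rightarrow(2)$, assume $h_u=\tfrac32$. The three facts already yield $J^{\{a\}}_{(n)}\nu=0$ and $G^{\{u\}}_{(n)}\nu=0$ for all $n\ge1$, together with $L_n\nu=0$ for $n=1$ and $n\ge3$; thus $\nu$ is singular as soon as $L_2\nu=0$, i.e. as soon as $c(\g,k)=c_{sug}$. To get this I would compute $L_2\nu=\omega_{(3)}\nu$ by solving \eqref{GGsimplified} for $\omega$, whose coefficient $-2(k+h^\vee)(e_\theta|[u,v])$ is nonzero for a suitable pair $u,v$ since $k\ne-h^\vee$ and $(e_\theta|[\cdot,\cdot])$ is nondegenerate. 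Because $\nu$ kills all non-negative current modes, every normally ordered current term on the right of \eqref{GGsimplified}, and also $\partial J^{\{[[e_\theta,u],v]^\natural\}}$, is annihilated, leaving a multiple of $G^{\{v\}}_{(3)}\bigl(G^{\{u\}}_{(0)}\nu\bigr)$; evaluating this through the second product formula \eqref{formG1} produces $p(k)$ times a $\g^\natural$-Casimir contraction on $\g_{-1/2}$ that is proportional to $h_u-\tfrac32$, hence vanishes under (1). This is the main obstacle: all other modes are controlled purely formally, but the vanishing of $L_2\nu$ genuinely uses the explicit structure in \eqref{GGsimplified}--\eqref{formG1} together with the hypothesis $h_u=\tfrac32$, and it is exactly here that the (a priori necessary) central-charge equality is forced.

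For $(2)\Rightarrow(3)$: if $\nu$ is singular then $\nu\notin\C\vac$, since it has weight $2$, so by the remark following the definition of a singular vector $\mathcal J^k(\g)=W^k(\g,\theta)\cdot\nu$ is a proper ideal. Being generated by the homogeneous singular vector $\nu$ of weight $2$, it is graded with all conformal weights $\ge2$; as $G^{\{v\}}$ has weight $\tfrac32$, its image in $\mathcal W^k(\g,\theta)$ is nonzero. Finally, for $(3)\Rightarrow(1)$: properness of $\mathcal J^k(\g)$ makes $\mathcal W^k(\g,\theta)$ a nonzero vertex algebra in which $\omega=\omega_{sug}$. Since $\nu\in\mathcal J^k(\g)$ and $\mathcal J^k(\g)$ is a two-sided ideal, $\nu_{(1)}G^{\{u\}}=(\tfrac32-h_u)G^{\{u\}}$ lies in $\mathcal J^k(\g)$, so $(\tfrac32-h_u)\,\overline{G^{\{u\}}}=0$ in $\mathcal W^k(\g,\theta)$; as $\overline{G^{\{u\}}}\ne0$, we conclude $h_u=\tfrac32$, which is (1). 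This closes the cycle, and I expect the only nonroutine input to be the central-charge step in $(1)\Rightarrow(2)$.
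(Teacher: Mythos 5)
Your proof is correct, and in outline it is the paper's proof: the same cycle $(1)\Rightarrow(2)\Rightarrow(3)\Rightarrow(1)$, the same weight-$\ge 2$ spanning argument for $(2)\Rightarrow(3)$, and the same Casimir-eigenvalue argument for $(3)\Rightarrow(1)$ (the paper phrases it inside the quotient $\mathcal W^k(\g,\theta)$, you phrase it via membership of $\nu_{(1)}G^{\{u\}}$ in the two-sided ideal; these are the same argument). Where you genuinely depart from the paper is in $(1)\Rightarrow(2)$, and your version is the more careful one. The paper's proof of that step rests on the assertion that $\omega_{sug}$ is \emph{primary} of conformal weight $2$ for $\omega$, and from this it concludes that all positive modes of all generators, $\omega$ included, annihilate $\omega-\omega_{sug}$. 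But, exactly as you observe, $\omega_{sug}$ is only quasi-primary: $[\omega_\lambda\omega_{sug}]=(\partial+2\lambda)\omega_{sug}+\tfrac{\lambda^3}{12}c_{sug}\vac$ (the asserted primarity holds only when $c_{sug}=0$, which fails at the very levels of interest), so $L_2(\omega-\omega_{sug})=\tfrac12\bigl(c(\g,k)-c_{sug}\bigr)\vac$, and the singularity of $\omega-\omega_{sug}$ is \emph{not} a formal consequence of condition (1) together with primarity of the currents: one must still show $c(\g,k)=c_{sug}$. Your proposal isolates precisely this point and closes it by solving \eqref{GGsimplified} for $\omega$, reducing $L_2(\omega-\omega_{sug})$ to the single term $G^{\{v\}}_{(3)}\bigl(G^{\{u\}}_{(0)}(\omega-\omega_{sug})\bigr)$, and evaluating it through \eqref{formG1}; the outcome is a nonzero multiple of $p(k)\,(h_u-\tfrac32)/(k+h^\vee)$, where $h_u$ is the $(\omega_{sug})_0$-eigenvalue on $G^{\{u\}}$, and this vanishes under (1). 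I checked that this computation goes through (for instance, for $\g=sl(4)$ both $c(\g,k)-c_{sug}$ and the multiple of $p(k)(h_u-\tfrac32)/(k+h^\vee)$ produced by the contraction equal $-6(k+1)(k+\tfrac32)(k+\tfrac83)/\bigl((k+3)(k+4)\bigr)$), so your extra work is not redundant: it repairs a real gap in the published argument. It also buys something conceptual: the identity $c(\g,k)-c_{sug}=\mathrm{const}\cdot p(k)\,(h_u-\tfrac32)/(k+h^\vee)$ shows directly that at non-collapsing levels (where $p(k)\ne0$) equality of central charges is equivalent to condition (1), i.e., it yields Corollary \ref{conformal-levels-formula} uniformly, whereas the paper obtains that corollary only a posteriori from the case-by-case classification.
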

\begin{proof}
 (\ref{conf-level}) $\implies$ (2).
Recall  that $J^{\{a\}}$ is primary of conformal weight $1$ for $\omega$ for all $a\in\g^\natural$. This, by \eqref{sug}, also implies that $\omega_{sug}$ is primary of conformal weight $2$ for $\omega$. By \eqref{current}, if $n\ge 0$,  $(\omega_{sug})_nJ^{\{a\}}=\delta_{n,0}J^{\{a\}}$. By the right Wick formula, $(\omega_{sug})_nG^{\{v\}}=0$ for $n>0$ and, by hypothesis, $(\omega_{sug})_0G^{\{v\}}=\omega_0G^{\{v\}}=\tfrac{3}{2}G^{\{v\}}$. It follows that $(\omega-\omega_{sug})_{(n)}X=0$ for $X=J^{\{a\}}$ or $X=G^{\{v\}}$ and $n>0$. Thus, by skewsymmetry of the $\lambda$-bracket, $X_{(n)}(\omega-\omega_{sug})=0$, for $n>0$ so 
$Z_{n}(\omega-\omega_{sug})=0$ for $n>0$ and $Z$ any generator. 

(2) $\implies$ (3). By (2), since $J^{\{b\}} (b\in\g^\natural), G^{\{u\}}\ (u\in\g_{-1/2}), \omega$ strongly generate ${W}^{k}(\g, \theta)$, we have that any element of ${W}^{k}(\g, \theta)\cdot(\omega-\omega_{sug})$ can be written as a sum of elements of type
$$
a^1_{s_1}\cdots a^t_{s_t}\cdot(\omega-\omega_{sug})
$$ with $a^i$ in the given set of generators and $s_i\le 0$ for all $i$.

 Since $a^1_{s_1}\cdots a^t_{s_t}\cdot(\omega-\omega_{sug})$ has conformal weight 
$$
-\sum_i s_i +2\ge 2,
$$
 we see that ${W}^{k}(\g, \theta)\cdot(\omega-\omega_{sug})$ cannot contain $G^{\{v\}}$.

(3) $\implies$ (1).  Recall from \eqref{sug} the definition of $\omega_{sug}^i$. From \eqref{JG}, we see that $\sum_{j=1}^{\dim \g^\natural_i} :J^{\{b^j_i\}}J^{\{ a_i^j\}} :_{0}$ acts on  $G^{\{v\}}$ as the Casimir operator of $\g^\natural_i$ acting on $v$. Since $\g_{-1/2}$ is either irreducible or is a direct sum of two contragredient $\g^\natural$-modules, we have  that $G^{\{v\}}$ is an eigenvector for $\sum_{j=1}^{\dim \g^\natural_i} :J^{\{b^j_i\}}J^{\{ a_i^j\}} :_{0}$     hence also for $(\omega_{sug})_0=\sum_i(\omega_{sug}^i)_0$.  Since $G^{\{v\}}\ne 0$  and $\omega=\omega_{sug}$ in ${W}^{k}(\g, \theta)$, the corresponding eigenvalue must be $\tfrac{3}{2}$.
\end{proof}

 The next result gives a classification of conformal embedding at non-collapsing levels. If $V(\mu)$, $\mu\in (\h^\natural)^*$, is an irreducible  $\g^\natural$--module,  then we can write
\begin{equation}\label{mui}
V(\mu)=\bigotimes_{j\in I}V_{\g^\natural_j}(\mu^{j}),
\end{equation}
 where $V_{\g^\natural_j}(\mu^{j})$ is an  irreducible $\g_j^\natural$--module. Let $\rho_0^j$ be the Weyl vector in $\g^\natural_j$ (with respect to the positive system induced by the choice of positive roots for $\g$).
 
 \begin{theorem} Let $k\in \mathcal K$ be a non--collapsing level.  Then   $k$ is a conformal level if and only if  $k_i\ne 0$ for all $i$ and 
 \begin{equation}\label{tremezzi}
\sum_{i\in I}\frac{(\mu^i|\mu^i+2\rho^i_0)}{2(k_i+h^\vee_{0,i})}=\frac{3}{2},
\end{equation}
where $V(\mu)$ is   an irreducible component of the $\g^\natural$-module $\g_{-1/2}$ and the $\mu^j$are defined in \eqref{mui}.
 \end{theorem}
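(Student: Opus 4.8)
The plan is to read off the conformality condition $\omega=\omega'_{sug}$ from the action of the zero modes $(\omega_{sug})_0$ and $(\omega'_{sug})_0$ on the free generators $J^{\{a\}}$ and $G^{\{u\}}$ of ${W}^{k}(\g,\theta)$, and then to pass between the universal algebra ${W}^{k}(\g,\theta)$ and its simple quotient ${W}_{k}(\g,\theta)$ via Propositions \ref{T} and \ref{g2g}. The computation underlying everything, already valid in the free algebra ${W}^{k}(\g,\theta)$, is the following: if $u$ ranges over a copy of the irreducible $\g^\natural$-component $V(\mu)\subseteq\g_{-1/2}$, then by \eqref{JG} the vectors $G^{\{u\}}$ span a $\g^\natural$-submodule isomorphic to $V(\mu)$, and, exactly as in the proof of Proposition \ref{T}, $\sum_l:J^{\{b^l_i\}}J^{\{a^l_i\}}:_{0}$ acts on it as the Casimir operator of $\g^\natural_i$. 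Hence by \eqref{sug} the operator $(\omega^i_{sug})_0$ acts on $G^{\{u\}}$ as the scalar $\tfrac{(\mu^i|\mu^i+2\rho^i_0)}{2(k_i+h^\vee_{0,i})}$, and summing over $i\in I$ shows that $(\omega_{sug})_0 G^{\{u\}}$ equals the left-hand side of \eqref{tremezzi} times $G^{\{u\}}$ (the value is the same for $V(\mu)$ and its contragredient, so the resulting equation does not depend on the chosen component).

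For the implication ``$\Leftarrow$'', I would assume $k_i\ne0$ for all $i$ together with \eqref{tremezzi}. Since $k\in\mathcal K$, the hypothesis $k_i\ne0$ for all $i$ guarantees $k_i+h^\vee_{0,i}\ne0$ for every $i$, so $\omega_{sug}=\omega'_{sug}$ and Proposition \ref{T} is applicable. By the computation above, \eqref{tremezzi} says precisely that $(\omega_{sug})_0G^{\{u\}}=\tfrac32 G^{\{u\}}$ in ${W}^{k}(\g,\theta)$, which is the first of the equivalent conditions of Proposition \ref{T}. The equivalent third condition then states that $\mathcal{J}^{k}(\g)={W}^{k}(\g,\theta)\cdot(\omega-\omega_{sug})$ is a proper ideal; since $\omega-\omega_{sug}\in\mathcal{J}^{k}(\g)$, it lies in the maximal proper ideal of ${W}^{k}(\g,\theta)$ and therefore vanishes in ${W}_{k}(\g,\theta)$. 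Thus $\omega=\omega_{sug}=\omega'_{sug}$ in ${W}_{k}(\g,\theta)$ and $k$ is conformal.

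For the implication ``$\Rightarrow$'', assume $k$ is conformal, so $\omega=\omega'_{sug}$ in ${W}_{k}(\g,\theta)$. I would first exclude $k_j=0$. If $k_j=0$ then the index $j$ is dropped from $\omega'_{sug}$; by the tensor decomposition \eqref{current}, every $J^{\{a\}}$ with $a\in\g^\natural_j$ lies in the $j$-th factor, on which each remaining summand $\omega^i_{sug}$ ($i\ne j$) acts trivially, so $(\omega'_{sug})_0 J^{\{a\}}=0$. On the other hand $J^{\{a\}}$ has conformal weight $1$, so $\omega_0 J^{\{a\}}=J^{\{a\}}$; the equality $\omega=\omega'_{sug}$ then forces $J^{\{a\}}=0$ in ${W}_{k}(\g,\theta)$ for all $a\in\g^\natural_j$. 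Since no component of $\g^\natural$ acts trivially on $\g_{-1/2}$, I may choose $a\in\g^\natural_j$ and $u\in\g_{-1/2}$ with $[a,u]\ne0$; then \eqref{JG} gives $G^{\{[a,u]\}}={J^{\{a\}}}_{(0)}G^{\{u\}}=0$ with $[a,u]\ne0$, contradicting Proposition \ref{g2g}, because $k$ is non-collapsing (so $p(k)\ne0$ by Theorem \ref{TT}, whence no nonzero $G^{\{w\}}$ vanishes). Hence $k_i\ne0$ for all $i$, so $\omega'_{sug}=\omega_{sug}$ and $\omega=\omega_{sug}$ in ${W}_{k}(\g,\theta)$. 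Finally, non-collapsing gives $G^{\{u\}}\ne0$ in ${W}_{k}(\g,\theta)$, and evaluating $\tfrac32 G^{\{u\}}=\omega_0 G^{\{u\}}=(\omega_{sug})_0 G^{\{u\}}$ by means of the eigenvalue computed above yields \eqref{tremezzi}.

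I expect the delicate point to be the $k_j=0$ case: one must be certain that discarding the $j$-th Sugawara summand really makes $(\omega'_{sug})_0$ annihilate $J^{\{a\}}$ for $a\in\g^\natural_j$, which relies on the orthogonality and commutativity of distinct components $\g^\natural_i$ recorded in \eqref{JJ} and made transparent by the tensor factorization \eqref{current}. The remaining ingredient requiring genuine care is the Casimir eigenvalue identity for $(\omega_{sug})_0$ on $G^{\{u\}}$; once that is established, the coincidence of \eqref{tremezzi} with the first condition of Proposition \ref{T} renders both implications essentially formal.
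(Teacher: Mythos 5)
Your proof is correct, and its skeleton matches the paper's: the Casimir eigenvalue computation identifying \eqref{tremezzi} with condition \eqref{conf-level} of Proposition \ref{T}, Proposition \ref{T} itself for the ``if'' direction, and Proposition \ref{g2g} together with Theorem \ref{TT} to guarantee $G^{\{u\}}\ne 0$ in ${W}_{k}(\g,\theta)$ for the ``only if'' direction. The one place where you genuinely diverge is the exclusion of $k_j=0$. The paper disposes of this in one line: by Lemma \ref{formG} the linear polynomial $k_i$ divides $p(k)$, so $k_j=0$ forces $p(k)=0$, which by Theorem \ref{TT} means $k$ is collapsing; thus non-collapsing alone, with no appeal to conformality, already gives $k_i\ne 0$ for all $i$ (so this hypothesis in the statement is in fact automatic). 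Your argument instead uses conformality: if $k_j=0$, then $(\omega'_{sug})_0$ annihilates $J^{\{a\}}$ for $a\in\g^\natural_j$ while $\omega_0$ acts as $1$, so $J^{\{a\}}=0$ in ${W}_{k}(\g,\theta)$, whence $G^{\{[a,u]\}}=0$ for some $[a,u]\ne 0$, contradicting Proposition \ref{g2g}. This is valid --- the vanishing of the relevant $\lambda$-brackets across distinct components does follow from \eqref{JJ}, the orthogonality of the ideals $\g^\natural_i$, and the Wick formula, as you indicate via \eqref{current} --- but it is longer and proves less than the divisibility argument. A second, purely cosmetic difference: in the ``only if'' direction you evaluate $\omega_0=(\omega_{sug})_0$ directly on the nonzero vector $G^{\{u\}}$ in ${W}_{k}(\g,\theta)$, whereas the paper passes through the intermediate quotient $\mathcal{W}^{k}(\g,\theta)$ and the implication (3)$\Rightarrow$(1) of Proposition \ref{T}; the two are equivalent, and yours is slightly more direct.
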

 \begin{proof}First of all we note that, if $k\in\mathcal K$ is a non-collapsing level and $k_i\ne0$ for all $i$, then \eqref{tremezzi} is equivalent to condition \eqref{conf-level} of Proposition \ref{T}. Indeed, by \eqref{JG}, the map $u\mapsto G^{\{u\}}$ from $\g_{-1/2}$ to $span(G^{\{u\}}\mid u\in\g_{-1/2})$ is a map of $\g^\natural$-modules and, since the level is non--collapsing, it is an isomorphism. By \eqref{JG} again, ${J^{\{a\}}}_{(n)}G^{\{v\}}=0$ for all  $a\in\g^\natural$, $v\in\g_{-1/2}$, and $n>0$. It follows that $(\omega_{sug})_0$ acts on $G^{\{v\}}$ as $\sum_{i\in I}\frac{1}{k_i+h^\vee_{0,i}}C_{\g^\natural_i}$. Thus,  if $v$ belongs to  the irreducible component $V(\mu)$ of $\g_{-1/2}$, then the left hand side of \eqref{tremezzi}  gives the eigenvalue for the action of  $(\omega_{sug})_0$ on $G^{\{v\}}$. Since $\g_{-1/2}$ is either irreducible or the sum of two contragredient irreducible $\g^\natural$-modules, the left hand side of \eqref{tremezzi}  gives the eigenvalue of $(\omega_{sug})_0$ on $G^{\{v\}}$ for all $v\in\g_{-1/2}$.
 
 Assume that $k$ is a conformal non-collapsing level. By formula \eqref{result}, if $k_i=0$ for some $i$, then $k$ is a collapsing level, thus $k_i\ne 0$ for all $i$. 
Since the level is non--collapsing, we see from Theorem \ref{TT} that $p(k)\ne0$. It follows from Proposition \ref{g2g} that, for all $v\in \g_{-1/2}$, $G^{\{v\}}$ is nonzero in ${W}_{k}(\g, \theta)$.
Since $k$ is a conformal level, $\mathcal J^k(\g)$ is contained in the maximal proper ideal of  ${W}^{k}(\g, \theta)$. Hence  there is an onto map $\mathcal {W}^{k}(\g, \theta)\to {W}_{k}(\g, \theta)$, and in particular  $G^{\{v\}}$ is also nonzero in $\mathcal{W}^{k}(\g, \theta)$. From Proposition \ref{T}, we deduce that 
$(\omega_{sug})_0G^{\{v\}}=\tfrac{3}{2}G^{\{v\}}$ 
hence, for any irreducible component $V(\mu)$ of the $\g^\natural$-module $\g_{-1/2}$, we have that \eqref{tremezzi} holds.

Conversely, if $k_i\ne 0$ for all $i$ and \eqref{tremezzi} holds for a component $V(\mu)$ of $\g_{-1/2}$, then,
as already observed,  \eqref{tremezzi} holds for all components of $\g_{-1/2}$. Thus \eqref{conf-level} of  Proposition \ref{T} holds. It follows that $\mathcal{J}^{k} (\g)$ is a proper ideal, hence $\omega=\omega_{sug}$ in ${W}_{k}(\g, \theta)$.
\end{proof}

\begin{cor} \label{non-simple}
Assume that $k$ is a collapsing level such that relation (\ref{tremezzi}) holds. Then $\mathcal W^ k (\g)$ is not simple.
\end{cor}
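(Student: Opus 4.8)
The plan is to exhibit a nonzero proper ideal in $\mathcal W^k(\g,\theta)$ by comparing this algebra with the simple quotient ${W}_{k}(\g,\theta)$, both being quotients of ${W}^{k}(\g,\theta)$. The whole argument rests on playing off two facts about the images of the generators $G^{\{v\}}$, $v\in\g_{-1/2}$: collapsing forces these images to vanish in ${W}_{k}(\g,\theta)$, whereas the relation \eqref{tremezzi} forces them to survive in $\mathcal W^k(\g,\theta)$. A simple algebra cannot accommodate both.

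First I would record what collapsing gives: by Theorem \ref{TT} the level $k$ is collapsing exactly when $p(k)=0$, and then Proposition \ref{g2g} yields $G^{\{v\}}=0$ in ${W}_{k}(\g,\theta)$ for every $v\in\g_{-1/2}$. Next I would extract from \eqref{tremezzi} the complementary information that these same vectors are nonzero in $\mathcal W^k(\g,\theta)$. The key point is that the left-hand side of \eqref{tremezzi} is precisely the eigenvalue of $(\omega_{sug})_0$ on $G^{\{v\}}$ computed inside the universal algebra ${W}^{k}(\g,\theta)$: by \eqref{JG} one has ${J^{\{a\}}}_{(n)}G^{\{v\}}=\delta_{n,0}G^{\{[a,v]\}}$ for $n\ge0$, so each $(\omega_{sug}^i)_0$ acts on $G^{\{v\}}$ through the Casimir of $\g^\natural_i$, producing the summand $\frac{(\mu^i|\mu^i+2\rho^i_0)}{2(k_i+h^\vee_{0,i})}$ when $v$ lies in the component $V(\mu)$. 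Hence \eqref{tremezzi} is equivalent to condition \eqref{conf-level} of Proposition \ref{T}, and via the equivalence \eqref{conf-level}$\Leftrightarrow$\eqref{non-triv} there I obtain $G^{\{v\}}\ne0$ in $\mathcal W^k(\g,\theta)$ for all $v$, together with the fact that $\mathcal J^k(\g)$ (see \eqref{J}) is a proper ideal of ${W}^{k}(\g,\theta)$.

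With both facts in hand the conclusion is immediate. Being generated by the homogeneous vector $\omega-\omega_{sug}$ of conformal weight $2$, the ideal $\mathcal J^k(\g)$ is graded and proper, hence contained in the unique maximal proper graded ideal $\mathcal I$ of ${W}^{k}(\g,\theta)$. This produces a surjection $\pi\colon\mathcal W^k(\g,\theta)={W}^{k}(\g,\theta)/\mathcal J^k(\g)\twoheadrightarrow{W}^{k}(\g,\theta)/\mathcal I={W}_{k}(\g,\theta)$. Choosing any $v\ne0$ in $\g_{-1/2}$ (nonzero since $\g\ne sl(2)$ is excluded, and since \eqref{tremezzi} already forces $\g^\natural\ne\{0\}$), the class of $G^{\{v\}}$ is nonzero in $\mathcal W^k(\g,\theta)$ but maps under $\pi$ to $G^{\{v\}}=0$ in ${W}_{k}(\g,\theta)$. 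Therefore $\ker\pi$ is a nonzero proper ideal of $\mathcal W^k(\g,\theta)$, so $\mathcal W^k(\g,\theta)$ is not simple.

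The step I expect to be the main obstacle is the second one: justifying that \eqref{tremezzi} is equivalent to condition \eqref{conf-level} of Proposition \ref{T} in the present setting. The unnumbered Theorem preceding this corollary derives \eqref{tremezzi} under the hypotheses $k\in\mathcal{K}$ non-collapsing and $k_i\ne0$ for all $i$, whereas here $k$ is collapsing, so some $k_i=0$. I therefore need to check that the eigenvalue computation for $(\omega_{sug})_0$ on $G^{\{v\}}$ is performed entirely inside the universal algebra ${W}^{k}(\g,\theta)$, where the $G^{\{v\}}$ are free generators and thus nonzero, and uses only \eqref{JG}; it does not require the level to be non-collapsing. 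I must also note that \eqref{tremezzi}, having the quantities $k_i+h^\vee_{0,i}$ in its denominators, presupposes $k_i+h^\vee_{0,i}\ne0$ for all $i$, which is exactly the running hypothesis of Proposition \ref{T}, so that this Proposition does apply.
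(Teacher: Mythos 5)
Your proof is correct and follows essentially the same route as the paper's: collapsing forces $G^{\{u\}}=0$ in ${W}_{k}(\g,\theta)$, while \eqref{tremezzi} together with Proposition \ref{T} forces $G^{\{u\}}\ne 0$ in $\mathcal{W}^{k}(\g,\theta)$, and the two facts are incompatible with simplicity. The paper's proof is just a terser version of this; you have merely made explicit the two points it leaves implicit, namely that the equivalence of \eqref{tremezzi} with condition \eqref{conf-level} of Proposition \ref{T} is an eigenvalue computation valid in the universal algebra (so it does not need the non-collapsing hypothesis), and that the conclusion is reached via the surjection $\mathcal{W}^{k}(\g,\theta)\twoheadrightarrow {W}_{k}(\g,\theta)$.
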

\begin{proof}
Since $k$ is a collapsing level we have that $ G^{ \{ u \} } = 0 $ in ${W}_{k}(\g, \theta)$.  But Proposition \ref{T}  gives that $ G^{ \{ u \} } \ne  0 $ in ${W}^{k}(\g, \theta)$. The proof follows.
\end{proof}

Solving \eqref{tremezzi} case-by-case, one finds that, if $k$ is a solution, then   $k\in\{-\frac{2}{3}h^\vee,-\frac{h^\vee-1}{2}\}$. We do not have a uniform explanation for this striking coincidence, but we can provide a proof for broad classes of examples with a limited amount  of case-by-case checking. In the next Propositions we perform these calculations and list all conformal non--collapsing levels in all cases.


\begin{prop}\label{conformalweightsimple} Assume that $\g^\natural$ is either zero  or simple or 1-dimensional. 

If $\g=sl(3)$, or $\g=spo(n|n+2)$ with $n\ge 2$, $\g=spo(n|n-1)$ with $n\ge 2$, $\g=spo(n|n-4)$ with $n\ge 4$, then there are no non--collapsing conformal levels.

In all other cases the non-collapsing  conformal levels  are 
\begin{enumerate}
\item $k = -\frac{ h^{\vee} -1 }{2}$ if $\g$ is of type $G_2, F_4, E_6, E_7, E_8, F(4),G(3)$ (for both choices of $\theta$), or $\g=psl(m|m)$ ($m\ge2$);
\vskip 3pt
\item $k=-\frac{2}{3} h^{\vee}$ if  $\g=sp(n)\ (n\ge 6)$, or $\g=spo(2|m)\ (m\ge 2)$, or $\g=spo(n|m)\ (n\geq 4)$.
\end{enumerate}
\end{prop}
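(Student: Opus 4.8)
The plan is to reduce the statement to the explicit eigenvalue condition \eqref{tremezzi} established in the preceding theorem, and then to evaluate it case-by-case using Tables~1, 2, 3. Since $\g^\natural$ is assumed to be zero, simple, or $1$-dimensional, the index set $I$ is a singleton (or empty), so \eqref{tremezzi} collapses to a single equation $\frac{(\mu|\mu+2\rho_0)}{2(k+h^\vee_0)}=\frac{3}{2}$, where $V(\mu)$ is an irreducible component of $\g_{-1/2}$ as a $\g^\natural$-module, $h^\vee_0=h^\vee_{0,1}$, and $k$ has been identified with $k_1=k+\half(h^\vee-h^\vee_0)$ via \eqref{ki}. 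The Casimir eigenvalue $(\mu|\mu+2\rho_0)$ must be computed in the normalization induced by $(\cdot|\cdot)_{|\g^\natural\times\g^\natural}$ fixed by $(\theta|\theta)=2$, which is the subtle point: one must be careful that the form restricted to $\g^\natural$ need not be the one normalizing the highest root of $\g^\natural$ to length $2$, so the relation between the Casimir eigenvalue and $h^\vee_0$ carries a normalization factor that varies with the case.

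First I would set up the general reduction. The module $\g_{-1/2}$ and its $\g^\natural$-module structure are read off from the tables, and in each case the relevant highest weight $\mu$ is a fundamental or small weight (e.g.\ the defining representation, a spin representation, or $S^3\C^2$). The plan is to write \eqref{tremezzi} as $(\mu|\mu+2\rho_0)=3(k+h^\vee_0)$, substitute $k=k_1-\half(h^\vee-h^\vee_0)$ only at the very end, and solve the resulting linear equation in $k$. Because $h^\vee_0$ and the Casimir eigenvalue are both explicit rational numbers in each row of the tables, each case is a one-line computation once the normalization is pinned down. The cleanest way to organize this is to group the algebras by the structure of $\g^\natural$: the exceptional series $G_2,F_4,E_6,E_7,E_8,F(4),G(3),psl(m|m)$ where the answer is $-\frac{h^\vee-1}{2}$, the symplectic/orthosymplectic families $sp(n),spo(2|m),spo(n|m)$ where the answer is $-\frac{2}{3}h^\vee$, and finally the exceptional families $sl(3)$ and the three $spo(n|n+c)$ families with no solution.

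The main obstacle will be the careful bookkeeping of the two normalizations and the verification that, in the ``no solution'' cases, the unique $k$ solving \eqref{tremezzi} is in fact excluded. For $sl(3)$ one has $\g^\natural=gl(1)$ abelian, so $h^\vee_0=0$ and $\rho_0=0$; the condition \eqref{tremezzi} becomes purely about the $gl(1)$-charge of $\g_{-1/2}$, and I expect the resulting $k$ to coincide with a collapsing level (so it is excluded as non-collapsing) or to be $-h^\vee$ (excluded since $k\in\mathcal K$). The three families $spo(n|n+2)$, $spo(n|n-1)$, $spo(n|n-4)$ are precisely those where the single candidate $k$ from the Casimir equation turns out to equal a root of $p(k)$ from Table~4, hence is collapsing: I would verify this by comparing the solution of $(\mu|\mu+2\rho_0)=3(k+h^\vee_0)$ against the factors of $p(k)$ listed for $spo(n|m)$, namely $(k+\half)(k+\tfrac{n-m+4}{4})$, and checking that the two branches coincide in exactly these three arithmetic relations between $n$ and $m$.

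Finally, for the generic cases I would confirm that the two values $-\frac{2}{3}h^\vee$ and $-\frac{h^\vee-1}{2}$ emerge, which requires showing $k_i\ne0$ (so that the level is genuinely non-collapsing, as the theorem demands) and $k\in\mathcal K$ (so that $k_i+h^\vee_{0,i}\ne0$, making $\omega'_{sug}$ well-defined). The only real computation is the Casimir eigenvalue in each row; everything else is substitution. I do not expect any single case to be hard, but the aggregate of normalization checks across all the tables is where errors are most likely, so I would tabulate $(\mu|\mu+2\rho_0)$, $h^\vee_0$, and the resulting $k$ in a systematic list mirroring Tables~1--3 rather than treating the algebras in an ad hoc order.
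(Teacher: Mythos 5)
Your reduction of \eqref{tremezzi} to the single equation $\frac{(\mu|\mu+2\rho_0)}{2(k+\half(h^\vee+h^\vee_0))}=\frac{3}{2}$ and the plan to solve it family by family is the same skeleton as the paper's proof. The one methodological difference is how the Casimir eigenvalue is obtained: you propose to compute $(\mu|\mu+2\rho_0)$ row by row from Tables 1--3, with all the normalization bookkeeping that entails, whereas the paper gets it uniformly. Writing $C_{\g_0}=2x^2+C_{\g^\natural}$ and using the fact (Lemma 5.1 of \cite{KW}) that $C_{\g_0}$ acts on $\g_{-1/2}$ by $h^\vee-1$, while $2\,(\ad x)^2$ acts by $\half$ there, one gets $(\mu|\mu+2\rho_0)=h^\vee-\tfrac{3}{2}$ in \emph{every} case. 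This single identity eliminates exactly the normalization issue you rightly flag as the main source of error; the only case-by-case input left is the relation between $h^\vee_0$ and $h^\vee$ (namely $h^\vee_0=\tfrac{2}{3}h^\vee-2$ in the exceptional cases, $h^\vee_0=h^\vee-1$ in the $spo$ family). Your route is viable in principle, but substantially more laborious and fragile.

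The genuine gap is in your treatment of the excluded families. You assert that $spo(n|n+2)$, $spo(n|n-1)$, $spo(n|n-4)$ are precisely those where the candidate level is a root of $p(k)$, ``hence is collapsing,'' and you propose to verify this by matching the candidate against the factors $(k+\half)(k+\tfrac{n-m+4}{4})$. That match succeeds \emph{only} when $n-m=4$: the candidate is $k=-\tfrac{2}{3}h^\vee$ with $h^\vee=\half(n-m)+1$, and requiring $-\tfrac{2}{3}h^\vee\in\{-\half,\,-\tfrac{n-m+4}{4}\}$ forces $n-m=4$. The other two families are excluded for entirely different reasons: for $spo(n|n+2)$ one has $h^\vee=0$, so the candidate $k=0$ equals $-h^\vee$, i.e.\ the level is critical (and $p(0)=\tfrac14\ne0$, so it is \emph{not} collapsing); for $spo(n|n-1)$ with $n>2$ the candidate $k=-1$ gives $k_1=-\half\ne0$ but $k_1+h^\vee_{0,1}=k+\half(h^\vee+h^\vee_0)=0$, so $k\notin\mathcal K$ and $\omega'_{sug}$ is not even defined (again $p(-1)\ne0$, not collapsing); the member $spo(2|1)$ of that family is instead handled by noting that \eqref{tremezzi} is an empty sum when $\g^\natural=\{0\}$. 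Since you apply the checks $k\ne-h^\vee$ and $k\in\mathcal K$ only to ``the generic cases,'' your argument as written would end up declaring $k=0$ (for $spo(n|n+2)$) and $k=-1$ (for $spo(n|n-1)$) to be non-collapsing conformal levels, contradicting the statement. The fix is to run the full list of exclusions --- collapsing ($p(k)=0$), critical ($k=-h^\vee$), and $k_i+h^\vee_{0,i}=0$ with $k_i\ne0$ --- over every family, as the paper does.
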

\begin{proof}  If $\g^\natural=\{0\}$ (i.e. $\g=spo(2|1)$) then, clearly, \eqref{tremezzi} is never satisfied.
 In all other cases $\g_{-1/2}$, as a $\g^\natural$--module,  is either irreducible or the sum of two  contragredient modules. 
 Let $V(\mu)$, $\mu\in (\h^\natural)^*$, be an irreducible component of $\g_{-1/2}$ as a $\g^\natural$--module. Let $\rho_0$ be the Weyl vector in $\g^\natural$ (with respect to the positive system induced by the choice of positive roots for $\g$). Then \eqref{tremezzi} reduces to 
 \begin{equation}\label{1}
\frac{(\mu|\mu+2\rho_0)}{2(k+\half(h^\vee+h_{0}^\vee))}=\frac{3}{2}.
\end{equation}

Let $C_{\g_0},C_{\g^\natural}$ be the Casimir elements of $\g_0$, $\g^\natural$ respectively. Clearly
$$
C_{\g_0}=\frac{x^2}{(x|x)}+C_{\g^\natural}=2x^2+C_{\g^\natural}.
$$
By Lemma 5.1 of \cite{KW}, we see that the eigenvalue for the action of $C_{\g_0}$ on $\g_{-1/2}$ is $h^\vee-1$. On the other hand, since $2ad(x)^2(v)=\half v$ for all $v\in\g_{-1/2}$, we see that the eigenvalue of $C_{\g_0}$ is also $\half+(\mu|\mu+2\rho_0)$, hence
$$
(\mu|\mu+2\rho_0)=h^\vee-\frac{3}{2}.
$$
 
Plugging
\eqref{delignehzero} into \eqref{1}Ê we obtain that  $k = -\frac{ h^{\vee} -1 }{2}$. $\g=sl(3)$ has to be dropped from the list because in that case $k=-1$ is a collapsing level.
\par

A case-by-case check shows that in cases $\g=spo(n|m)$ ($m\ge0$)  we have that $h_{0}^\vee= h^\vee -1$.
Plugging the data into \eqref{1}Ê we obtain  $k = -\frac{2}{3} h^{\vee}$. The cases $\g=spo(n|n+2)$ ($n\ge 2$) have to be discarded because $k+h^\vee=0$, while the cases $\g=spo(n|n-1)$ with $n>2$ have to be discarded because $k_1+h^\vee_{0}=k+\half(h^\vee+h_{0}^\vee)=0$. Finally the cases $spo(n|n-4)$ ($n\ge 4$) have to be discarded because $k=-2$ is a collapsing level.
\end{proof}

\begin{prop}\label{withcenter} Assume that $\g^\natural=\g^\natural_0\oplus \g^\natural_1$ with $\g^\natural_0\simeq\C$ and $\g^\natural_1$ simple.

If $\g=sl(m|m-3)$ with $m\ge 4$, then there are no non--collapsing conformal levels.

 In  other cases the non--collapsing conformal levels are 
\begin{enumerate}
\item $k=-\frac{2}{3}h^\vee$ if $\g=sl(m|m+1)$ ($m\ge2$), and $\g=sl(m|m-1)$ ($m\ge 3$);
\vskip 2pt
\item $k=-\frac{2}{3}h^\vee$ and $k=-\frac{h^\vee-1}{2}$ in all other cases.
\end{enumerate}
 \end{prop}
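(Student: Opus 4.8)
Our standing hypothesis on $\g^\natural$ singles out, by inspection of Tables 1--3, exactly the pairs with $\g=sl(m|n)$, $m\neq n$ (the Lie algebra $sl(n)$ and $sl(2|m)$ being the extreme cases $n=0$ and $m=2$), where $\g^\natural=gl(m-2|n)=\g^\natural_0\oplus sl(m-2|n)$, the simple ideal is $\g^\natural_1\cong sl(m-2|n)$, and $\g_{-1/2}\cong\C^{m-2|n}\oplus(\C^{m-2|n})^*$; note $h^\vee=m-n$, $h^\vee_{0,1}=h^\vee-2$, and the excluded algebra $sl(n+2|n)$ is precisely the value $h^\vee=2$. The plan is to specialize the characterization \eqref{tremezzi} of non-collapsing conformal levels to the vector module $V(\mu)=\C^{m-2|n}$ (the dual component gives the same equation, since the central charge only enters squared and the two modules have equal Casimir). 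As $\g^\natural_0$ is abelian one has $h^\vee_{0,0}=0$ and $\rho^0_0=0$, so by \eqref{ki} the two denominators in \eqref{tremezzi} are $2k_0=2k+h^\vee$ and $2(k_1+h^\vee_{0,1})=2k+h^\vee+h^\vee_{0,1}$; writing $a=(\mu^0|\mu^0)$ and $b=(\mu^1|\mu^1+2\rho^1_0)$, condition \eqref{tremezzi} becomes the scalar equation $\tfrac{a}{2k+h^\vee}+\tfrac{b}{2k+h^\vee+h^\vee_{0,1}}=\tfrac32$.

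Next I would pin down $a$ and $b$. For $b$, the Casimir eigenvalue of the vector representation of $\g^\natural_1=sl(m-2|n)$ with respect to the form restricted from the normalization $(\theta|\theta)=2$, a standard super-Weyl-vector computation (projecting the weights onto the traceless Cartan) gives $b=\tfrac{(h^\vee_{0,1})^2-1}{h^\vee_{0,1}}$. For the central contribution $a$ I reuse the argument from the proof of Proposition \ref{conformalweightsimple}: by \cite[Lemma 5.1]{KW} the Casimir $C_{\g_0}$ acts on $\g_{-1/2}$ by $h^\vee-1$, and since $C_{\g_0}=2x^2+C_{\g^\natural}$ with $2\,\ad(x)^2=\tfrac12$ on $\g_{-1/2}$ and $\g^\natural_0\perp\g^\natural_1$ (so $C_{\g^\natural}$ acts by $a+b$), one gets $a+b=h^\vee-\tfrac32$, whence $a=\tfrac{h^\vee}{2(h^\vee-2)}$.

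Substituting these values and clearing denominators turns \eqref{tremezzi} into a genuine quadratic in $k$: its leading coefficient is a nonzero multiple of $h^\vee-2$, and $h^\vee\neq2$ precisely because $sl(n+2|n)$ is excluded. A direct substitution then shows that both $k=-\tfrac23 h^\vee$ and $k=-\tfrac{h^\vee-1}{2}$ satisfy it; being the two roots of a quadratic, they exhaust the solutions of \eqref{tremezzi}. This confirms, uniformly in $h^\vee$ for the present family, the occurrence of exactly the two values announced at the start of Section \ref{section4}.

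It remains to discard the solutions that are collapsing or fall outside $\mathcal K$. Here $p(k)=k_0k_1$ with $k_0=k+\tfrac{h^\vee}2$ and $k_1=k+1$, so non-collapsing means $k\neq-1,-\tfrac{h^\vee}2$; membership in $\mathcal K$ requires $k+h^\vee\neq0$ and (since $k_1\neq0$) $k_1+h^\vee_{0,1}=k+h^\vee-1\neq0$. Checking the two roots against these forbidden loci, $-\tfrac23 h^\vee$ fails only for $h^\vee=3$ (where $k+h^\vee-1=0$), while $-\tfrac{h^\vee-1}{2}$ fails for $h^\vee=3$ (collapsing), $h^\vee=-1$ ($k+h^\vee=0$), and $h^\vee=1$ ($k+h^\vee-1=0$). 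Translating via $h^\vee=m-n$: for $sl(m|m-3)$ both roots are excluded, giving no conformal level; for $sl(m|m+1)$ and $sl(m|m-1)$ only $-\tfrac23 h^\vee$ survives (case (1)); in all remaining cases both survive (case (2)). The main obstacle is the normalization-sensitive evaluation of $b$ (equivalently $a$) in the second step, together with the bookkeeping of the coincidences at $h^\vee\in\{-1,1,3\}$, where the two generic roots collide with the collapsing or critical loci and produce the case distinctions.
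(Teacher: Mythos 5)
Your proposal is correct and follows essentially the same route as the paper: specialize \eqref{tremezzi} to one irreducible component of $\g_{-1/2}$, reduce to a quadratic in $k$ whose roots are $-\tfrac{2}{3}h^\vee$ and $-\tfrac{h^\vee-1}{2}$, and then discard roots that are collapsing, critical, or have $k_1+h^\vee_{0,1}=0$ --- your bookkeeping at $h^\vee\in\{-1,1,3\}$ reproduces the paper's exclusions for $sl(m|m+1)$, $sl(m|m-1)$, $sl(m|m-3)$ exactly. The only (harmless) difference is which Casimir contribution is computed directly: the paper evaluates $(\mu^0|\mu^0)=\tfrac{h^\vee}{2h^\vee-4}$ via ${\rm str}(\ad(c)^2)=2\,\sdim\,\g_{-1/2}$ and deduces $(\mu^1|\mu^1+2\rho^1_0)$ from the relation $a+b=h^\vee-\tfrac32$, whereas you quote the defining-representation Casimir $\tfrac{(h^\vee_{0,1})^2-1}{h^\vee_{0,1}}$ of $sl(m-2|n)$ and deduce $a$ from the same relation; the two computations are consistent, so both close the argument.
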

\begin{proof}
Write $\g^\natural=\C c\oplus \g^\natural_1$ with $c$ central in $\g^\natural$ and $\g^\natural_1$ simple. Since $\theta(c)=0$ we see that $c$ acts nontrivially on $\g_{\pm1/2}$, otherwise $c$ would be central in $\g$. We choose $c$ so that one of the eigenvalues of the action of $c$ on  $\g_{-1/2}$ is $1$. Since $\g_{-1/2}$ is self-dual as $\g^\natural$--module, we see that also $-1$ is an eigenvalue. Let $U^{\pm}$ be the corresponding eigenspaces. Since $U^{\pm}$ are $\g^\natural$--modules, we see that $\g_{-1/2}$ is the direct sum of at least two submodules. Browsing Tables 1--3, we see that $\g_{-1/2}$ breaks in at most two irreducible components,  so we have  
$\g_{-1/2}=U^+\oplus U^{-}$,
with $U^{\pm}$ irreducible $\g^\natural$--modules.
 
Let $V(\mu)$ be one of these two irreducible components.  According to \eqref{mui}, $V(\mu)=V_{\C c}(\mu^0)\otimes V_{\g^\natural_1}(\mu^1)$. We 
have to solve
\begin{equation}\label{eqconfcenter}
\frac{(\mu^0|\mu^0)}{2(k+h^\vee/2)}+\frac{(\mu^1|\mu^1+2\rho_0^1)}{2(k+\half(h^\vee+h^\vee_{0,1}))}=\frac{3}{2}.
\end{equation}

First we compute $(\mu^0|\mu^0)$: this is the eigenvalue for the action of $\frac{c^2}{(c|c)}$ on $U^+$, so $(\mu^0|\mu^0)=\frac{1}{(c|c)}$. On the other hand, letting $\kappa(\cdot,\cdot)$ denote the Killing form of $\g$, we have 
$$2h^\vee(c|c)=\kappa(c,c)=str(ad(c)^2)=2 \sdim \g_{-1/2}
$$
where, in the last equality, we used the fact that $c^2$ acts trivially on $\g_0+\g_{1}+\g_{-1}$ and as the identity on $\g_{\pm1/2}$. Using (5.6) of \cite{KW}, it follows that 
\begin{equation}\label{muzero}
(\mu^0|\mu^0)=\frac{h^\vee}{2h^\vee-4}.
\end{equation}
Arguing as in the proof of Proposition \ref{conformalweightsimple}, we see that 
$
(\mu^1|\mu^1+2\rho_0^1)+(\mu^0|\mu^0)=h^\vee-\frac{3}{2},
$
hence 
$$
(\mu^1|\mu^1+2\rho_0^1)=h^\vee-\frac{3}{2}-\frac{h^\vee}{2h^\vee-4}.
$$
Finally  a case-by-case check shows that $h^\vee_{0,1}=h^\vee-2$. Plugging these data in \eqref{eqconfcenter}, we obtain the result. The cases $\g=sl(m|m+1)$ with $k=-\frac{h^\vee-1}{2}$ and $m\ge2$ have to be discarded because $k+h^\vee=0$. The cases $\g=sl(m|m-1)$ with $k=-\frac{h^\vee-1}{2}$ and $m\ge3$ have to be discarded because $k_1+h^\vee_{0,1}=0$. The cases $\g=sl(m|m-3)$ with $k=-\frac{h^\vee-1}{2}$ and $m\ge4$ have to be discarded because $k$ is a collapsing level. The cases $\g=sl(m|m-3)$ with $k=-\frac{2}{3}h^\vee$ and $m\ge4$ have to be discarded because $k_1+h^\vee_{0,1}=0$.
\end{proof}

\begin{prop} Assume that $\g^\natural=\sum_{i=1}^r\g^\natural_i$ with $\g^\natural_1\simeq sl(2)$ and $r\ge 2$.

If $\g=osp(n+5|n)$ with $n\ge 2$ or $\g=D(2,1;a)$ with $a=\half,-\half,-\frac{3}{2}$, then there are no non--collapsing conformal levels. 

In the other cases the non--collapsing conformal levels are 
 \begin{enumerate}
 \item $k=-\frac{h^\vee-1}{2}$ if $\g=D(2,1;a)$ ($a\not\in\{\half,-\half,-\frac{3}{2}\}$), $\g=osp(n+8|n)$ ($n\ge0$), $\g=osp(n+2|n)$ ($n\ge2$), $\g=osp(n-4|n)$ ($n\ge8$);
 \vskip 3pt
\item $k=-\frac{2}{3}h^\vee$ if $\g=osp(n+7|n)$ ($n\ge0$),
$\g=osp(n+1|n)$ ($n\ge4$);
\vskip 3pt
\item $k=-\frac{2}{3}h^\vee$ and $k=-\frac{h^\vee-1}{2}$ in all other cases.
 \end{enumerate}
 \end{prop}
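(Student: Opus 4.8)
The plan is to specialize the conformal equation \eqref{tremezzi} to the present situation and to solve it family by family, in the spirit of Propositions \ref{conformalweightsimple} and \ref{withcenter}. Reading off Tables 1--3, the Lie superalgebras falling under the hypothesis are the orthosymplectic algebras $\g=osp(m|n)$ (with the Lie algebra $so(m)$, i.e.\ $n=0$, as a special case) together with the exceptional family $D(2,1;a)$; in every such case $\g^\natural_1\simeq sl(2)$ acts on $\g_{-1/2}$ through a tensor factor $\C^2$, so that in the decomposition $V(\mu)=\bigotimes_j V_{\g^\natural_j}(\mu^j)$ of an irreducible component the weight $\mu^1$ is that of the standard two-dimensional $sl(2)$-module.

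First I would record, exactly as in the earlier proofs, the global identity coming from Lemma 5.1 of \cite{KW}: the operator $\sum_i C_{\g^\natural_i}$ acts on each irreducible component of $\g_{-1/2}$ by the scalar $h^\vee-\tfrac32$, whence
\[
\sum_{i=1}^r(\mu^i|\mu^i+2\rho^i_0)=h^\vee-\tfrac32.
\]
For the $sl(2)$-factor I would compute $(\mu^1|\mu^1+2\rho^1_0)=\tfrac32$ and $h^\vee_{0,1}=2$ directly (its root has squared length $2$ for the restricted form, in all the $osp$-cases), and for the remaining factor(s) I would determine the $h^\vee_{0,i}$ and the Casimir scalars by inspection; for $\g=osp(m|n)$ the complementary component turns out to satisfy $h^\vee_{0,i}=h^\vee-4$. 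Substituting these data, together with $k_i+h^\vee_{0,i}=k+\tfrac12(h^\vee+h^\vee_{0,i})$, turns \eqref{tremezzi} into a rational equation in $k$ whose coefficients depend only on $h^\vee=m-n-2$.

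Clearing denominators produces a polynomial equation of degree at most two, and the crux is to check, by direct substitution, that $-\tfrac23h^\vee$ and $-\tfrac{h^\vee-1}{2}$ are its only roots (one of them dropping out, as a pole of \eqref{tremezzi}, exactly when two of the denominators $k_i+h^\vee_{0,i}$ coincide). It then remains to decide, for each family, which root is admissible: it must lie in $\mathcal K$ (so that $k+h^\vee\ne0$ and $k_i+h^\vee_{0,i}\ne0$ whenever $k_i\ne0$), have all $k_i\ne0$, and be non-collapsing; by Theorem \ref{TT} this last condition reads $p(k)\ne0$, with $p$ taken from Table 4. Writing each condition in terms of $h^\vee$, one finds that every failure pins down a single value of $h^\vee$: the level $-\tfrac23h^\vee$ is lost exactly for $h^\vee\in\{0,3,6,-6\}$ (respectively critical for $osp(n+2|n)$, collapsing for $osp(n+5|n)$ and for $osp(n-4|n)$, and a pole for $osp(n+8|n)$), while $-\tfrac{h^\vee-1}{2}$ is lost exactly for $h^\vee\in\{-1,5,3\}$ (critical for $osp(n+1|n)$, collapsing for $osp(n+7|n)$, and $k_i+h^\vee_{0,i}=0$ for $osp(n+5|n)$). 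Reading off these exclusions reproduces the three sublists, with $osp(n+5|n)$ losing both roots.

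The family $\g=D(2,1;a)$ must be treated separately, since there both components of $\g^\natural$ are copies of $sl(2)$ carrying $a$-dependent rescalings of the form, with parameters $s_1,s_2$ (say $s_1=a$, $s_2=-1-a$) satisfying $s_1+s_2=-1$ because $h^\vee=0$; then $(\mu^i|\mu^i+2\rho^i_0)=\tfrac32 s_i$ and $h^\vee_{0,i}=2s_i$, and \eqref{tremezzi} collapses to $k(2k-1)=0$, i.e.\ the two roots $0=-\tfrac23h^\vee$ and $\tfrac12=-\tfrac{h^\vee-1}{2}$. The root $0$ is always critical, and $\tfrac12$ survives precisely for $a\notin\{\tfrac12,-\tfrac12,-\tfrac32\}$, the excluded values being those at which $k_i=0$ or $k_i+h^\vee_{0,i}=0$. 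The main obstacle throughout is the bookkeeping of the restricted-form normalizations, that is, the dual Coxeter numbers and Casimir eigenvalues attached to the non-standard forms (sign-reversed on the symplectic and odd parts, and $a$-dependent for $D(2,1;a)$), together with the degenerate three-component case $\g=so(8)$, where all $h^\vee_{0,i}=2$, equation \eqref{tremezzi} is linear, and only $k=-\tfrac{h^\vee-1}{2}=-\tfrac52$ results.
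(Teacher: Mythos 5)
Your proposal is correct and follows essentially the same route as the paper's own proof: specializing \eqref{tremezzi} using the Casimir identity $(\mu^2|\mu^2+2\rho_0^2)=h^\vee-3$, the case-checked value $h^\vee_{0,2}=h^\vee-4$, solving the resulting quadratic to get the roots $-\tfrac23h^\vee$ and $-\tfrac{h^\vee-1}{2}$, and then discarding roots that are critical, collapsing, or violate $k_i+h^\vee_{0,i}\ne0$, with $D(2,1;a)$ and $so(8)$ handled separately (your uniform parametrization $s_1+s_2=-1$ reducing $D(2,1;a)$ to $k(2k-1)=0$ is a slightly tidier packaging of the paper's substitution, but the substance is identical). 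The only blemish is cosmetic: in your exclusion list for $-\tfrac23h^\vee$ the values $6$ and $-6$ are listed in the order opposite to the families they correspond to ($h^\vee=6$ is the pole case $osp(n+8|n)$, $h^\vee=-6$ the collapsing case $osp(n-4|n)$), though your stated reasons make the intended matching clear.
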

\begin{proof}
Let $\a$ be the positive root for $\g^\natural_1$. By browsing Tables 1--3  we see that $\g_{-1/2}$ is irreducible and that, if we write $\g_{-1/2}\simeq V_{sl(2)}(\mu^1)\otimes(\otimes_{i\ge 2} V_{\g^\natural_i}(\mu^i)$, then $\mu^1=\a/2$, where $\a$ is the positive root of $sl(2)$.

Then \eqref{tremezzi} becomes
\begin{equation}\label{tremezzisl2}
\frac{3(\a|\a)}{8(k+\frac{1}{2}(h^\vee+(\a|\a)))}+\sum_{i=2}^r\frac{(\mu^i|\mu^i+2\rho^i_0)}{2(k+\half(h^\vee+h^\vee_{0,i}))}=\frac{3}{2}.
\end{equation}

Let us assume first that $(\a|\a)=2$ and that $r=2$. This assumption leaves out only $D(2,1;a)$ and $D_4$. Arguing as in Proposition \ref{conformalweightsimple}, we see that 
$$\frac{3(\a|\a)}{4}+(\mu^2|\mu^2+2\rho^2_0)=\frac{3}{2}+(\mu^2|\mu^2+2\rho^2_0)=h^\vee-\frac{3}{2}
$$
so
$
(\mu^2|\mu^2+2\rho^2_0)=h^\vee-3.
$
Moreover one checks case-by-case that $h_{0,2}=h^\vee-4$. Plugging these data in \eqref{tremezzisl2}, we obtain that the solutions are $k=-\frac{h^\vee-1}{2}$ and  $k=-\frac{2}{3}h^\vee$. The case $\g=osp(n+1|n)$ with $n\ge4$ and $k=-\frac{h^\vee-1}{2}$ has to be discarded because $k+h^\vee=0$. This happens also in case $\g=osp(n+2|n)$ with $n\ge2$ and $k=-\frac{2}{3}h^\vee$. In case $\g=osp(n+7|n)$ with $n\ge0$ the level  $k=-\frac{h^\vee-1}{2}$ is collapsing. This also happens at level $k=-\frac{2}{3}h^\vee$ when $\g=osp(n-4|n)$ with $n\ge8$ or $\g=osp(n+5|n)$ with $n\ge2$. The case $\g=osp(n+5|n)$ with $n\ge2$ and $k=-\frac{h^\vee-1}{2}$ has to be dropped because $k_i+h^\vee_{0,i}=0$ for some $i$. The same happens in case $\g=osp(n+8|n)$ with $n\ge2$ and $k=-\frac{2}{3}h^\vee$.

Let us now discuss $\g=D(2,1;a)$: in this case $\g^\natural_2\simeq sl(2)$. Let $\beta$ be the positive root of $\g^\natural_2$. From Table 2 we see that $\g_{-1/2}=V_{sl(2)}(\a/2)\otimes V_{sl(2)}(\be/2)$. Then \eqref{tremezzi} becomes
\begin{equation}\label{tremezzisl3}
\frac{3(\a|\a)}{8(k+\frac{1}{2}(h^\vee+(\a|\a)))}+\frac{3(\be|\be)}{8(k+\frac{1}{2}(h^\vee+(\be|\be)))}=\frac{3}{2}.
\end{equation}
Plugging $h^\vee=0$, $(\a|\a)=-2(a+1)$, $(\be|\be)=2a$ into \eqref{tremezzisl3} and solving for $k$, one gets that $k=0=-\frac{2}{3}h^\vee$ or $k=\half=-\frac{h^\vee-1}{2}$. The value $k=0$ has to be discarded because this level is critical. If $a=\half$ or $a=-\frac{3}{2}$ then $k=\half$ is a collapsing level. If $a=-\half$ and $k=\half$ then $k_i+h^\vee_{0,i}=0$ for all $i$.

Finally, if $\g$ is of Type $D_4$ ($=osp(8|0)$), then $r=3$ and $\g^\natural_i\simeq sl(2)$ for all $i$. Furthermore, if $\a_i$ is the positive root of $\g^\natural_i$, then $\g_{-1/2}=\otimes_{i=1}^3 V_{sl(2)}(\a_i/2)$. Since $(\a_i|\a_i)=2$ for all $i$, it follows that \eqref{tremezzi} becomes
\begin{equation}\label{tremezzisl4}
\frac{9}{4(k+\frac{1}{2}(h^\vee+2))}=\frac{3}{2}.
\end{equation}
Since $h^\vee=6$, we obtain that $k=-\frac{h^\vee-1}{2}$ is the only solution of \eqref{tremezzisl4}. One checks directly that $k+h^\vee\ne0$, $k$ is a non--collapsing level, and that $k_i+h^\vee_{0,i}\ne0$ for all $i$.
\end{proof}



\begin{cor} \label{conformal-levels-formula}
$\mathcal{V}_{k}(\g^\natural)$ embeds conformally in ${W}_{k}(\g, \theta)$ if and only if $c(\g,k)=c_{sug}$.
\end{cor}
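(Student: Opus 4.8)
The plan is to prove the two implications separately; the forward one is essentially free, while the reverse one carries all the weight. If $\mathcal V_k(\g^\natural)$ embeds conformally then $\omega=\omega'_{sug}$ in ${W}_{k}(\g, \theta)$, and since the central charge is an intrinsic invariant of a Virasoro vector (the coefficient of $\tfrac{\lambda^3}{12}\vac$ in its $\lambda$-self-bracket) and is unchanged under passing to a quotient, we get $c(\g,k)=c_{sug}$ at once. For the converse I would first isolate the coset Virasoro vector $X:=\omega-\omega'_{sug}$. Since $J^{\{a\}}$ is primary of conformal weight $1$ both for $\omega$ and, via the Sugawara formula \eqref{sug}, for $\omega'_{sug}$, one has $[X_\lambda J^{\{a\}}]=0$ for all $a\in\g^\natural$; feeding this into the Wick formula for the quadratic expression \eqref{sug} gives $[X_\lambda\omega'_{sug}]=0$, and then expanding $\omega=X+\omega'_{sug}$ yields
\[
[X_\lambda X]=(\partial+2\lambda)X+\tfrac{\lambda^3}{12}\bigl(c(\g,k)-c_{sug}\bigr)\vac .
\]
Thus $X$ is itself a Virasoro vector of central charge $c(\g,k)-c_{sug}$ that $\lambda$-commutes with $\mathcal V^k(\g^\natural)$.

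Granting $c(\g,k)=c_{sug}$, I would next check that $X$ is then a singular vector of ${W}^{k}(\g, \theta)$ in the sense of Section \ref{uno}. The conditions on the generators $J^{\{a\}}$ hold because $J^{\{a\}}_{(n)}X=0$ for $n\ge0$; the conditions on $\omega$ hold because $\omega_{(m)}X=X_{(m)}X$ (using that $[\omega'_{sug}{}_\lambda X]=0$ kills the remaining contribution), and $X_{(m)}X$ vanishes for $m=2$ and $m\ge4$ automatically while it equals $\tfrac12\bigl(c(\g,k)-c_{sug}\bigr)\vac=0$ for $m=3$. The only surviving requirement is $G^{\{u\}}_{(m)}X=0$ for $m\ge1$, which by skew-symmetry of the $\lambda$-bracket reduces to $X_{(1)}G^{\{u\}}=\bigl(\tfrac32-\Delta'\bigr)G^{\{u\}}=0$, where $\Delta'=\sum_{i\in I}\tfrac{(\mu^i|\mu^i+2\rho^i_0)}{2(k_i+h^\vee_{0,i})}$ is the $\omega'_{sug}$-weight of $G^{\{u\}}$. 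Hence, for a non-collapsing $k$ (so that all $k_i\ne0$, $\omega'_{sug}=\omega_{sug}$, and $G^{\{u\}}\ne0$ in ${W}_{k}(\g, \theta)$), the equality $c(\g,k)=c_{sug}$ forces $X$ singular, hence $X$ lies in the maximal ideal and $\omega=\omega'_{sug}$ in ${W}_{k}(\g, \theta)$, precisely once the eigenvalue condition \eqref{tremezzi}, i.e. $\Delta'=\tfrac32$, is known to hold. Collapsing levels in $\mathcal K$ are already known to be conformal, so they satisfy $c(\g,k)=c_{sug}$ as well, and the asserted equivalence is automatic for them.

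The upshot of this reduction is that the entire corollary is equivalent to the single implication that $c(\g,k)=c_{sug}$ entails the conformal-weight condition \eqref{tremezzi}: the coset computation disposes of the $\omega$-modes of $X$ for free, and only the weight of $G^{\{u\}}$ remains to be matched to the central-charge equality. This is exactly the point at which no conceptual mechanism is available, and it is the main obstacle. I would settle it by the same case-by-case computation underlying Propositions \ref{conformalweightsimple} and \ref{withcenter}: writing $c(\g,k)-c_{sug}$ as a rational function of $k$ from \eqref{cgk} together with the standard Sugawara expression $c_{sug}=\sum_{i:\,k_i\ne0}\tfrac{k_i\,\sdim\g^\natural_i}{k_i+h^\vee_{0,i}}$, clearing denominators, and verifying, case by case through Tables 1--4 with the recorded values of $h^\vee$, $h^\vee_{0,i}$, $\sdim\g^\natural_i$ and the Casimir eigenvalues $(\mu^i|\mu^i+2\rho^i_0)$, that the zeros of $c(\g,k)-c_{sug}$ lying in $\mathcal K$ are exactly the conformal levels $k\in\{-\tfrac23 h^\vee,\,-\tfrac{h^\vee-1}{2}\}$ already obtained by solving \eqref{tremezzi}. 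Since necessity already places every conformal level among these zeros, the content of the verification is only that no extraneous zero appears.
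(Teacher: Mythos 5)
Your proposal is correct and, despite the coset-Virasoro packaging, ultimately takes the paper's route: the paper proves this corollary in one line by observing that the conformal levels it has already classified (collapsing levels in $\mathcal K$ plus the non-collapsing conformal levels found case by case via \eqref{tremezzi}) coincide exactly with the solution set of $c(\g,k)=c_{sug}$, and your argument rests on that identical case-by-case comparison, as you yourself acknowledge. The singular-vector reduction you build around $X=\omega-\omega'_{sug}$ is sound but re-derives machinery the paper already has (Proposition \ref{T} and the theorem containing \eqref{tremezzi}), so it adds structure without replacing the case-wise verification that carries the proof.
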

\begin{proof}
The set of collapsing levels $k$, such that $k+h^\vee\ne 0$ and $k_i+h^\vee_{0,i}\ne0$ whenever $k_i\ne 0$,   together with the set of conformal non--collapsing levels turns out to be exactly the set of solutions of the equation $c(\g,k)=c_{sug}$.
\end{proof}
\begin{rem} It is worthwile to note that, in general, equality of central charges does not imply conformal embedding. A counterexample is the following: take $W=\C\vac \subset V=W_{-1/3}(sl(3))$. Then $c(sl(3),-1/3)=0$, but, since $-1/3$ is a non-collapsing level, $\omega_V\ne 0$.
\end{rem}
\begin{example} \label{spec-1} Consider the case of $\g=sl(n)$, $n\ge3$.

If $n\ge4$ then $\g^\natural=\g^\natural_0\oplus \g^\natural_1$ with $\g^\natural_0=\C c$ the center of $\g^\natural$ and $\g^\natural_1\simeq sl(n-2)$. Thus $\g^\natural\simeq gl(n-2)$. In this case $h^\vee=n$, $h^\vee_{0,0}=0$, $h^\vee_{0,1}=n-2$.
Combining Theorem \ref{TT} and Proposition \ref{withcenter} we obtain 
\begin{enumerate}
\item If $k=-1$ then ${W}_{k}(sl(n),\theta)$ is the Heisenberg vertex algebra $V_{\frac{n-2}{2}}(\C c)$;
(this result  is also independently obtained in \cite{AM2} by using singular vectors from $V^{-1} (sl(n)) $ ).
\item If $k= -\frac{h^{\vee} }{2} =- \frac{n}{2}$ then ${W}_{k}(sl(n),\theta) \cong V_{k+1}  (sl(n-2) )$;
\item Assume that $k  \notin \{ -1, - h^{\vee}  /2 \}$. 
The embedding $   \mathcal V_{k}(gl(n-2))  \hookrightarrow {W}_{k}(sl(n),\theta)  $ is conformal if and only if
$k \in \{ \frac{1-n }{2}, - \frac{2}{3} n\}.$ 
\end{enumerate}
The equation $c(\g,k)=c_{sug}$ in this case reads
$$
\begin{cases}
\frac{k(n^2-1)}{k+n}-6k+n-4=1+\frac{(k+1)((n-2)^2-1)}{k+n-1}&\text{if $k\ne -\frac{n}{2}$}\\
\frac{k(n^2-1)}{k+n}-6k+n-4=\frac{(k+1)((n-2)^2-1)}{k+n-1}&\text{if $k= -\frac{n}{2}$}
\end{cases}
$$
whose solutions are precisely $-1,-\frac{2}{3}n ,\frac{1-n }{2},-\frac{n}{2}$.

The case $n=3$ is of particular interest. In this case $\g^\natural$ is one dimensional abelian: $\g^\natural=\C c $, $h^\vee=3$, $h^\vee_{0,0}=0$.
We have that ${ W}^ {k}(sl(3),\theta)$ is   the Bershadsky-Polyakov  vertex algebra, freely   generated by even vectors  $G^\pm$   of conformal weight $3/2$, the Virasoro vector $\omega$, and the Heisenberg vector $J=J^{\{c\}}$ (cf. \cite{Ber}, \cite{Ara-CMP}). 
By Theorem \ref{TT}, we obtain  ${ W}_{k}(sl(3),\theta)=\C\vac$ if $k=-\frac{h^\vee}{2}=-\frac{3}{2}$ while it is the Heisenberg vertex algebra $V_{\frac{1}{2}}(\C c)$ if $k =-1$.
 Since $k=-1=-\frac{h^\vee-1}{2}$, by  the proof   of Proposition \ref{conformalweightsimple}, there are no other conformal levels, but  condition \eqref{conf-level} of Proposition \ref{T} still holds. 
 Since $k$ is a collapsing level, we have that $ G^{ \{ u \} } = 0 $ in $W_k(\g)$.  But Proposition \ref{T}  gives that $ G^{ \{ u \} } \ne  0$ in ${\mathcal W}^{-1}(sl(3),\theta)$, 
  hence  the vertex algebra ${\mathcal W}^{-1}(sl(3),\theta)$ is intermediate between  ${W}^ {-1}(sl(3),\theta)$ and  ${W}_{-1}(sl(3),\theta)=V_{\frac{1}{2}}(\C c)$.
 
By \eqref{GGsimplified} with $k=-1$ we get
$$  {G^+} _{(0) }G^{-}   = \nu (\omega-\omega_{sug}) \qquad (\nu \ne 0). $$ This fact and \eqref{non-triv} in Proposition \ref{T} imply that ${\mathcal W}^{-1}(sl(3),\theta)$ is an extension of a Heisenberg vertex algebra by elements $G^\pm$ with $\l$-brackets
$$ [ {G^+} _{\lambda} G^{+} ] = [ {G^+} _{\lambda} G^{-} ]  = [ {G^-} _{\lambda} G^{-} ]  = 0. $$

One can also show that for every $k \in {{ \Z}_{>  0} }$, $(G^+ _{(-1)}   )^ k {\bf 1}$ is a non-trivial singular vector in $  {\mathcal W}^{-1}(sl(3),\theta)$ and therefore $ {\mathcal W}^{-1}(sl(3),\th)$ contains infinitely many non-trivial ideals.

Another occurrence of this phenomenon happens when 
$\g=sp(4)$ at level $k=-2$. In this case $\g^\natural\simeq sl(2)$, the level $k$ is collapsing but condition \eqref{tremezzi} still holds.  We have that $\mathcal W^ {-2} (\g,\theta)$ is a non-simple extension of $\mathcal V_{-2}(sl(2))$ with
 $[{G ^{\{ u \} }} _{\lambda}  G ^{\{ v \} } ] = 0$
for every $u, v \in {\g}_{-1/2} $.
It is shown in \cite{AKMMP2} that also relation $:G ^{\{ u \} }  G ^{\{ v \}}:=0$ holds.
As in the $\g=sl(3)$ example above, one can construct infinitely many ideals in the vertex algebra $\mathcal W^ {-2} (sp(4),\theta)$.


\end{example}

\section{Affine vertex algebra $V_{k}(sl(n))$ and $W$-algebras}\label{sln+1inW}
 In this Section we apply our simplified formula \eqref{GGsimplified} for $[{G^{\{u\}}}_\lambda{G^{\{v\}}}]$ and the conformal embedding  of $\mathcal V_{k}(gl(n))$ in ${W}_{k}(sl(2|n), \theta)$ at level $k=-\frac{h^\vee-1}{2}=\frac{n-1}{2}$ to construct a realization of $V_{-\frac{n+1}{2}}(sl(n+1))$ in the tensor product of $W_k(sl(2|n),\theta)$ with a lattice vertex algebra for $n\ge4$, $n\ne5$: see Theorem \ref{sln+1}.

 Since $W_k(psl(2|2),\theta)$ is the $N=4$ superconformal algebra,  our result can be considered a higher rank generalization of the realization, given in \cite{A-2014}, of $V_{-\frac{3}{2}}(sl(3))$  as a vertex subalgebra of  the tensor product of a $N=4$ superconformal algebra with a lattice vertex algebra.

Let $\g=sl(2|n)$ and $k=-\frac{h^\vee-1}{2}=\frac{n-1}{2}$.  In this case $\g^\natural \simeq gl(n)$, $k_{0}=k+\half h^\vee=\frac{1}{2}$ and $k_{1}=k+\half(h^\vee-h^\vee_{0,1})=\frac{n+1}{2}$. By Proposition \ref{withcenter},
 $\mathcal V_{\frac{n-1}{2}}(\g^\natural)$ embeds conformally in  ${W}_{k}(\g,\theta)$. 
 This embedding is a generalization of the correspondence between affine vertex algebra $\mathcal V_{k}(sl(2))$ and superconformal vertex algebras from \cite{A-1999} and \cite{FST} (see also \cite{A-2007} for the critical level version of this correspondence).



First we start with the following useful criterion for establishing that some elements of $W^k(sl(2|n),\theta)$ are in the ideal  $\mathcal{J}^{k} (sl(2 \vert n))$ (cf. \eqref{J}).
\begin{lemma} \label{kriterij}Assume that $v \in W^{k} (sl(2 \vert n),\theta)$ satisfies
$$(\omega_{sug} )_{0 }v = a v, \quad {\omega}_0 v = b v, \quad a\ne b,  $$
where $a, b \in {\mathbb C}$. Then $v \in \mathcal{J}^{k} (sl(2 \vert n))$.
\end{lemma}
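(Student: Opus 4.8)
The plan is to use the fact that $\mathcal{J}^{k}(sl(2\vert n))$ is, by its definition \eqref{J}, the ideal of $W^{k}(sl(2\vert n),\theta)$ generated by $\omega-\omega_{sug}$, and then to realize $v$ itself as (a scalar multiple of) the result of acting on $v$ by an appropriate mode of this generator. The whole argument rests on the observation that the two eigenvalue hypotheses differ, so that $\omega-\omega_{sug}$ acts on $v$ by a \emph{nonzero} scalar.

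First I would record the index bookkeeping. Both $\omega$ and $\omega_{sug}$ are of conformal weight $2$ with respect to $\omega$: for $\omega_{sug}$ this is exactly the statement established in the proof of Proposition \ref{T} (it is primary of conformal weight $2$). Hence $\omega-\omega_{sug}$ is homogeneous of conformal weight $2$, and in the mode notation $a_m=a_{(m+\D_a-1)}$ its energy-zero mode is $(\omega-\omega_{sug})_0=(\omega-\omega_{sug})_{(1)}$. Using linearity of the modes and the two hypotheses, I then compute
$$(\omega-\omega_{sug})_{(1)}v=(\omega-\omega_{sug})_0 v=\omega_0 v-(\omega_{sug})_0 v=(b-a)v.$$

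Next I would invoke that $\mathcal{J}^{k}(sl(2\vert n))=W^{k}(sl(2\vert n),\theta)\cdot(\omega-\omega_{sug})$ is a genuine (two-sided) ideal: it is a left ideal by associativity of the fusion product, and a right ideal because $W^{k}(sl(2\vert n),\theta)$ carries a Virasoro vector, exactly as recalled after the definition of singular vectors. In particular it is closed under the right action $\mathcal{J}^{k}(sl(2\vert n))\cdot W^{k}(sl(2\vert n),\theta)\subseteq \mathcal{J}^{k}(sl(2\vert n))$. Since $\omega-\omega_{sug}\in\mathcal{J}^{k}(sl(2\vert n))$ and $v\in W^{k}(sl(2\vert n),\theta)$, the element $(\omega-\omega_{sug})_{(1)}v$ lies in $\mathcal{J}^{k}(sl(2\vert n))$. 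Finally, because $a\neq b$ we may divide by $b-a$ to obtain
$$v=\tfrac{1}{b-a}\,(\omega-\omega_{sug})_{(1)}v\in\mathcal{J}^{k}(sl(2\vert n)),$$
which is the desired conclusion.

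There is essentially no deep obstacle here; the content is really the single line $(\omega-\omega_{sug})_0 v=(b-a)v$ together with the ideal property. The only points demanding care are the purely formal ones: correctly converting the energy mode $(\cdot)_0$ into the $(\cdot)_{(1)}$ product via the conformal weight $2$ of $\omega-\omega_{sug}$, and confirming that $\mathcal{J}^{k}(sl(2\vert n))$ is closed under taking the $(1)$-product of its generator against an arbitrary element of $W^{k}(sl(2\vert n),\theta)$. Both are immediate from the definitions and from facts already established in the excerpt.
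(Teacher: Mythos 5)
Your proof is correct and is essentially the paper's own argument in a different guise: the paper passes to the quotient $\mathcal{W}^{k}(sl(2\vert n),\theta)=W^{k}(sl(2\vert n),\theta)/\mathcal{J}^{k}(sl(2\vert n))$, where $\omega$ and $\omega_{sug}$ coincide, so a nonzero image of $v$ would force $a=b$; you instead stay in $W^{k}(sl(2\vert n),\theta)$ and use the two-sided ideal property of $\mathcal{J}^{k}(sl(2\vert n))$ to conclude $(b-a)v=(\omega-\omega_{sug})_{(1)}v\in\mathcal{J}^{k}(sl(2\vert n))$. The two formulations are equivalent, and every supporting fact you invoke (weight-$2$ homogeneity of $\omega-\omega_{sug}$ and the left/right ideal property of the fusion product in the presence of a Virasoro vector) is established in the paper.
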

\begin{proof}
Recall that $\omega_{sug}=\omega$ in $\mathcal W^{k} (sl(2 \vert n),\theta)= W^{k} (sl(2 \vert n),\theta)/ \mathcal{J}^{k} (sl(2 \vert n))$. If $v\ne0$ in $\mathcal W^{k} (sl(2 \vert n),\theta)$ then it  would be an eigenvector  for both $(\omega_{sug} )_{0 }$ and ${\omega}_0$, but then the corresponding eigenvalues must coincide.
\end{proof}
\vskip 10pt
It follows that in order to construct elements of the ideal $\mathcal{J}^{k} (sl(2 \vert n))$ it is enough to construct common eigenvectors $v$ for $\omega_0$ and $(\omega_{sug})_0$ which have different eigenvalues. 

To simplify notation let us introduce
$$
G^+_i=G^{\{e_{2,2+i}\}},\quad  G^-_i=G^{\{e_{2+i,1}\}}.
$$
We shall consider the elements
$$ v^{+}_{i,j}  = :G_i ^{+}  G_j ^{+} :,\ \ v^{-}_{i,j}  = :G_i ^{-}  G_j ^{-} :\quad (i,j =1, \dots, n). $$

We shall now give some structural formulas. Recall from the proof of Proposition \ref{withcenter} that $\g^\natural=\g^\natural_0\oplus\g^\natural_1$ with $\g^\natural_0=\C c$ and $\g^\natural_1\simeq sl(n)$, $\g_{-1/2}=U^+\oplus U^-$, and  $(c|c)=2-\frac{4}{h^\vee}$. Furthermore, note that $span(e_{2,2+i}\mid 1\le i\le n)$ and $span(e_{2+i,1}\mid 1\le i\le n)$ are both stable under the action of $c$ and $sl(n)$. It follows that we can choose $c$ so that $U^+=span(e_{2,2+i}\mid 1\le i\le n)$ and $U^-=span(e_{2+i,1}\mid 1\le i\le n)$. With this setting we have

\begin{lemma} \label{L-str-1}
\bea
&&{ G_i ^{ + } }_{(s) }G_j ^{+ } = { G_i ^{ - } }_{(s) }G_j ^{- }=0\quad (s\ge0), \label{prima}\\
&& :G_i ^{ + }  G_i ^{+ }:=:G_i ^{ - }  G_i ^{- }: = 0 , \label{primabis}\\
&&{ G_i ^{ +} }_{(2)} G_j ^{-}= \delta_{i,j} 2 (k+1) ( k+ \frac{h^{\vee}}{2}) {\bf 1},\label{seconda} \\
&&  {G_i ^{ +} }_{(1) }G_j ^{-}=    2 (k + \frac{h^{\vee} } {2}  ) J ^{ \{e_{2+j, 2+ i} \} }  \quad (i \ne j), \label{terza} \\
&&   {G_i ^{ +} }_{(1)} G_i ^{-} =  \frac{ n-2 }{n }   ( k  + 1 )J ^{ \{ c \} }  +  2 ( k+\frac{h^{\vee} } {2} ) J^{\{a\}}.  \label{quarta}
\eea
where  $a=(e_{1,1}+e_{2+i,2+i})^\natural_1=(e_{2,2}+e_{2+i,2+i})^\natural_1 \in \g^\natural_1\simeq sl(n)$.\end{lemma}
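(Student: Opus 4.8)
The plan is to derive all five identities from the simplified $\lambda$-bracket \eqref{GGsimplified} together with Lemma \ref{formG}, once the structure of $sl(2|n)$ has been made explicit. First I would fix the realization. Writing $e_{a,b}$ for matrix units with indices $1,2$ even and $2+1,\dots,2+n$ odd, I take $e_\theta=\tfrac12 e_{1,2}$, $e_{-\theta}=e_{2,1}$ and $x=\tfrac12(e_{1,1}-e_{2,2})$, so that $[e_\theta,e_{-\theta}]=x$, $[x,e_{\pm\theta}]=\pm e_{\pm\theta}$, and, with the invariant form normalized to the supertrace (which indeed yields $(\theta|\theta)=2$), one has $(e_\theta|e_{-\theta})=\tfrac12$ and $(x|x)=\tfrac12$. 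Then $\g_{-1/2}=U^+\oplus U^-$ with $U^+=\mathrm{span}(e_{2,2+i})$, $U^-=\mathrm{span}(e_{2+i,1})$, and $\g_{1/2}=\mathrm{span}(e_{1,2+l},e_{2+l,2})$, the latter carrying the pairing $\langle e_{1,2+l},e_{2+m,2}\rangle_{\mathrm{ne}}=\delta_{lm}$ (the other pairings being zero), so the $\langle\cdot,\cdot\rangle_{\mathrm{ne}}$-dual of $e_{1,2+l}$ is a multiple of $e_{2+l,2}$ and conversely. I would also record $h^\vee=2-n$, $p(k)=(k+1)(k+\tfrac{h^\vee}{2})$ (Table 4 / the Remark after Proposition \ref{deligne}), and, using $h^\vee_{0,1}=h^\vee-2$ from the proof of Proposition \ref{withcenter}, the values $k_0=k+\tfrac{h^\vee}{2}$, $k_1=k+1$, whence $p(k)/k_0=k+1$ and $p(k)/k_1=k+\tfrac{h^\vee}{2}$.

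For \eqref{prima} I would take $u=e_{2,2+i}$, $v=e_{2,2+j}$ in $U^+$ and feed them into \eqref{GGsimplified}. A one-line matrix computation gives $[u,v]=0$ and $[[e_\theta,u],v]=0$, which kills the $\omega$-term, the scalar term, the $\partial J$-term and the $\lambda$-linear term. The only survivor is the normally ordered quadratic term $\sum_\gamma :J^{\{[u,u^\gamma]^\natural\}}J^{\{[u_\gamma,v]^\natural\}}:$. Since $u_\gamma$ and its dual $u^\gamma$ are never simultaneously of type $e_{1,2+l}$ (the pairing matches $e_{1,2+l}$ with $e_{2+l,2}$), in every summand exactly one of them is of type $e_{1,2+l}$; using $[u,e_{1,2+l}]=0$ and $[e_{1,2+l},v]=0$, the corresponding commutator vanishes, so each summand is zero. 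Hence every $\lambda$-coefficient vanishes and ${G^+_i}_{(s)}G^+_j=0$ for $s\ge0$; the $U^-$ case is identical. Then \eqref{primabis} is immediate: since $G_i^+$ is odd, quasi-commutativity of the normally ordered product (\cite{BK2}) expresses $:G_i^+G_i^+:-(-1)^{p(G_i^+)p(G_i^+)}:G_i^+G_i^+:=2:G_i^+G_i^+:$ as a combination of the products ${G_i^+}_{(s)}G_i^+$, $s\ge0$, all of which are zero by \eqref{prima}; likewise for $G_i^-$.

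The mixed products \eqref{seconda}–\eqref{quarta} come straight from Lemma \ref{formG} with $u=e_{2,2+i}$, $v=e_{2+j,1}$. Here $[u,v]=\delta_{ij}e_{-\theta}$, so $(e_\theta|[u,v])=\tfrac12\delta_{ij}$ and \eqref{formG1} yields $4\cdot\tfrac12\delta_{ij}p(k)=\delta_{ij}\,2(k+1)(k+\tfrac{h^\vee}{2})$, which is \eqref{seconda}. For the $(1)$-product, $[[e_\theta,u],v]=\tfrac12(\delta_{ij}e_{1,1}+e_{2+j,2+i})$; plugging into \eqref{result} and projecting onto $\g^\natural_0=\C c$ and $\g^\natural_1\cong sl(n)$ gives the result. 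When $i\ne j$ the element $e_{2+j,2+i}$ already lies in $\g^\natural_1$, and $p(k)/k_1=k+\tfrac{h^\vee}{2}$ produces \eqref{terza}. When $i=j$ I would decompose $(e_{1,1}+e_{2+i,2+i})^\natural$ into its central part, a multiple $\tfrac{n-2}{2n}c$ of $c$ (the coefficient forced by $c=\mathrm{diag}(\tfrac{n}{n-2},\tfrac{n}{n-2},\tfrac{2}{n-2}I_n)$ and $(c|c)=2-\tfrac{4}{h^\vee}$), and its $sl(n)$ part $a=(e_{1,1}+e_{2+i,2+i})^\natural_1$; combining with $p(k)/k_0=k+1$ and $p(k)/k_1=k+\tfrac{h^\vee}{2}$ gives \eqref{quarta}. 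The two expressions for $a$ agree because $e_{1,1}-e_{2,2}=2x$ has zero $\natural$-projection.

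The \textbf{main obstacle} is purely bookkeeping: keeping the normalizations consistent so that the factors $2(k+1)$, $2(k+\tfrac{h^\vee}{2})$ and $\tfrac{n-2}{n}(k+1)$ come out exactly. The delicate points are the choice $e_\theta=\tfrac12 e_{1,2}$, needed to enforce $(e_\theta|e_{-\theta})=\tfrac12$ (this fixes the coefficients in \eqref{seconda} and \eqref{terza}), and the precise central projection coefficient $\tfrac{n-2}{2n}$, which must be tracked through the explicit form of $c$ and the value $(c|c)=2-\tfrac4{h^\vee}$. By contrast the conceptual content, namely which terms of \eqref{GGsimplified} survive and why the quadratic term in \eqref{prima} collapses, is straightforward.
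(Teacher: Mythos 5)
Your proof is correct and takes essentially the same route as the paper's: \eqref{prima} by checking that every term on the right-hand side of \eqref{GGsimplified} vanishes for $u,v$ both in $U^+$ (or both in $U^-$), including the quadratic $\sum_\gamma$ term via the pairing structure of $\g_{1/2}$; \eqref{primabis} by quasi-commutativity together with \eqref{prima}; and \eqref{seconda}--\eqref{quarta} from Lemma \ref{formG} with the value of $p(k)$ from Table 4. The only difference is that you carry out explicitly the matrix-unit, normalization, and $\natural$-projection computations (e.g.\ the coefficient $\tfrac{n-2}{2n}$ of $c$) that the paper's terse proof leaves to the reader.
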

 \begin{proof} Formula \eqref{prima} follows by  observing that, when $u=e_{2\,2+i},v=e_{2\,2+j}$ or 
 $u=e_{2+i\,1},v=e_{2+j\,1}$, all brackets in the r.h.s. of \eqref{GGsimplified} vanish. 
  
Recall that, since $\g_{-1/2}$ is purely odd in this case,
 $$
 :{G_i ^{ \pm } } G_j ^{\pm } : =- :G_j ^{ \pm }G_i ^{\pm }:+\int_{-\partial}^0[{G_i^\pm}_\mu G^\pm_j]d\mu.
 $$
 By \eqref{prima}, $[{G^+_i}_\mu G^+_j]=[{G^-_i}\mu G^-_j]=0$, so \eqref{primabis} follows.

 Formulas \eqref{seconda},  \eqref{terza}, and \eqref{quarta} follow from  Lemma \ref{formG} and the explicit expression for $p(k)$ given in Table 4.
 \end{proof}

\begin{proposition}  \label{rel-1}
Assume that $1 \le i, j \le n$, $i \ne j$. Then we have
\begin{enumerate}
\item \label{vijeigen} $$ ( {\omega_{ sug}})_0 v^{\pm} _{i,j} =  \frac{4 (n-2)}{n-1}  v^{\pm} _{i,j}. $$
\item\label{vijzero} Assume that $n \ne 5$. Then 
$ v^{\pm} _{i,j } \in \mathcal{J}^{k} (sl(2 \vert n)) $,
hence
\begin{equation} \label{normalorderzero}:G_i ^{+}  G_j ^{+}:=  :G_i ^{-}  G_j ^{-}:=0 \quad \mbox{in} \   {W}_{k}(sl(2|n),\theta). 
\end{equation}
\end{enumerate}
\end{proposition}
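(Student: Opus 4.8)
The plan is to apply the eigenvalue criterion of Lemma \ref{kriterij}: I will exhibit $v^{\pm}_{i,j}$ as a common eigenvector of $(\omega_{sug})_0$ and $\omega_0$ and show that, for $n\ne 5$, the two eigenvalues differ. Part \eqref{vijeigen} computes the $(\omega_{sug})_0$-eigenvalue, and part \eqref{vijzero} then follows at once from Lemma \ref{kriterij} together with the conformality of the level.

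For part \eqref{vijeigen}, the first step is to check that $v^{\pm}_{i,j}$ is a current-primary vector for the affinization of $\g^\natural$, i.e.\ that $(J^{\{a\}})_{(s)}v^{\pm}_{i,j}=0$ for all $a\in\g^\natural$ and $s>0$. Writing $v^+_{i,j}=(G^+_i)_{(-1)}G^+_j$ and commuting $(J^{\{a\}})_{(s)}$ past $(G^+_i)_{(-1)}$, only the $(J^{\{a\}})_{(0)}$-contribution survives, because by \eqref{JG} the bracket $[{J^{\{a\}}}_\lambda G^{\{u\}}]=G^{\{[a,u]\}}$ carries no higher powers of $\lambda$; every resulting summand then has the shape $(G^{\{w\}})_{(t)}G^+_j$ with $w\in U^+$ and $t\ge 0$, hence vanishes by \eqref{prima} (the case of $v^-_{i,j}$ is identical, with $U^-$ replacing $U^+$). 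Consequently $(\omega_{sug})_0$ acts on $v^{\pm}_{i,j}$ through the $\g^\natural$-Casimir of the finite-dimensional $\g^\natural$-module it generates, the eigenvalue being given by the left-hand side of \eqref{tremezzi} evaluated on that module. By the skew-symmetry relation established in the proof of Lemma \ref{L-str-1}, the map $u\wedge u'\mapsto\, :\!G^{\{u\}}G^{\{u'\}}\!:$ is $\g^\natural$-equivariant and antisymmetric, so $v^+_{i,j}$ lies in a copy of $\wedge^2U^+\cong\wedge^2\C^n$ carrying twice the central weight of $U^+$ (and $v^-_{i,j}$ in $\wedge^2U^-$, with central weight $-2$).

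Next I would evaluate this Casimir eigenvalue via \eqref{mui}. Let $W_c$ and $W_s$ be the central and the $sl(n)$ contributions to the $(\omega_{sug})_0$-eigenvalue $\tfrac32$ of $G^{\pm}_i$, so that $W_c+W_s=\tfrac32$ by conformality (Proposition \ref{withcenter}). On $v^{\pm}_{i,j}$ the central weight doubles, contributing $4W_c$, while the $sl(n)$-weight passes from $\C^n$ to $\wedge^2\C^n$; since the ratio of the $sl(n)$-Casimir on $\wedge^2\C^n$ to that on $\C^n$ equals $\tfrac{2(n-2)}{n-1}$ independently of normalization, the $sl(n)$ part becomes $\tfrac{2(n-2)}{n-1}W_s$. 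Using $k_0=\tfrac12$ and \eqref{muzero} with $h^\vee=2-n$ gives $W_c=(\mu^0|\mu^0)=\tfrac{n-2}{2n}$, whence $W_s=\tfrac{n+1}{n}$, and therefore $(\omega_{sug})_0 v^{\pm}_{i,j}=\big(4W_c+\tfrac{2(n-2)}{n-1}W_s\big)v^{\pm}_{i,j}=\tfrac{4(n-2)}{n-1}v^{\pm}_{i,j}$, which is \eqref{vijeigen}.

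Finally, for part \eqref{vijzero}: since $\omega$ is the genuine conformal vector of ${W}^k(\g,\theta)$ (so $\omega_{(0)}=T$), conformal weight is additive under normally ordered products, and as each $G^{\pm}_i$ has $\omega$-weight $\tfrac32$ by Theorem \ref{kac}(b), one gets $\omega_0 v^{\pm}_{i,j}=3\,v^{\pm}_{i,j}$. For $n\ne 5$ we have $\tfrac{4(n-2)}{n-1}\ne 3$, so Lemma \ref{kriterij} gives $v^{\pm}_{i,j}\in\mathcal J^{k}(sl(2|n))$; since $k$ is conformal, $\omega-\omega_{sug}=0$ in ${W}_{k}(\g,\theta)$, hence $\mathcal J^{k}(sl(2|n))$ lies in the maximal proper ideal of ${W}^{k}(\g,\theta)$ and $v^{\pm}_{i,j}=0$ there, yielding \eqref{normalorderzero}. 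I expect the one genuinely delicate point to be part \eqref{vijeigen}: the $\omega_{sug}$-weight of $v^{\pm}_{i,j}$ is emphatically \emph{not} the naive sum $\tfrac32+\tfrac32=3$ (that would require $(\omega_{sug})_{(0)}$ to act as $T$ on the odd generators, which it does not), and getting the correct value $\tfrac{4(n-2)}{n-1}$ hinges on the current-primary property together with the Casimir ratio in the normalization induced from $sl(2|n)$; it is precisely the coincidence of these two eigenvalues at $n=5$ that forces the exclusion of that case.
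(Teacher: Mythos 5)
Your proof is correct and follows essentially the same route as the paper's: current-primarity of $v^{\pm}_{i,j}$ via \eqref{JG} and \eqref{prima}, identification of their span with $\bigwedge^2\C^n$ (resp.\ $\bigwedge^2(\C^n)^*$) with doubled central weight, evaluation of the $(\omega_{sug})_0$-eigenvalue, and then Lemma \ref{kriterij} together with $\omega_0 v^{\pm}_{i,j}=3v^{\pm}_{i,j}$. The only (harmless) difference is computational: where the paper substitutes $k=\tfrac{n-1}{2}$, $h^\vee=2-n$, $h^\vee_{0,1}=-n$ and the explicit Casimir value on $\omega_2$, you use the normalization-free ratio $\tfrac{2(n-2)}{n-1}$ of $sl(n)$-Casimirs on $\bigwedge^2\C^n$ versus $\C^n$ combined with the conformality identity $W_c+W_s=\tfrac32$, which neatly sidesteps the sign conventions of the form induced from $sl(2|n)$.
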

\begin{proof}
Set
$$ \widetilde{U^{\pm}} ^{(2)} = \mbox{span}_{\mathbb C} (v^{\pm} _{i,j} \mid 1 \le i < j \le n).$$
Formula \eqref{JG} shows that ${J^{\{a\}}}_{(s)} u=0$ for all $u\in\widetilde{U^{\pm}} ^{(2)}$ and $s>0$.  As a $gl(n)$--module  $\widetilde{U^+}^{(2)}$ is 
isomorphic to $\bigwedge^2 \mathbb C^n$ and  $\widetilde{U^-}^{(2)}$  is 
isomorphic to $\bigwedge^2( {\mathbb C}^n)^*$. It follows that the eigenvalue of  $( {\omega_{ sug}})_0$ on  $\widetilde{U^+}^{(2)}$ is the same as the eigenvalue of $( {\omega_{ sug}})_0$ on $\widetilde{U^-}^{(2)}$ and the latter is
$$\frac{(2\mu_0|2\mu_0)}{2(k+\half h^\vee)}+\frac{(\omega_2|\omega_2+2\rho_1)}{2(k+\half(h^\vee+h^\vee_{0,1})}.
$$
Using \eqref{muzero} and substituting $k=\frac{n-1}{2}$, $h^\vee=2-n$, and $h^\vee_{0,1}=-n$, we find assertion \eqref{vijeigen}.

 Since
$\omega_0 v^{\pm} _{i,j} = 3 v^{\pm} _{i,j}$,
 Lemma \ref{kriterij} implies that, if $n \ne 5$, then $v^{\pm} _{i,j} \in  \mathcal{J}^{k} (sl(2 \vert n))$  hence \eqref{vijzero} follows.
\end{proof}




From now on  we assume that $n \ne  5$. Recall that we assumed $n\ge4$.
We have chosen  $c$ so that ${J^{\{c\}}}_{(0)} G^{\pm}_i=\pm G^{\pm}_i$. It follows that we can define a $\Z$--gradation on ${W}_{k}(sl(2|n),\theta)$
$$ {W}_{k}(sl(2|n),\theta) = \bigoplus {W}_{k} (sl(2|n),\theta)^{(i) },
$$
setting
$$
{W}_{k}(sl(2|n),\theta)^{(i) }= \{ v \in {W}_{k} (sl(2|n),\theta)\ \vert \ J^{ \{ c \} } _{(0)} v = i v \}. $$
Clearly $\mathcal V_k(sl(2|n)^\natural)  \subset  {W}_{k} (sl(2|n),\theta)^{(0)}$, and $G_i ^{\pm} \in{W}_{k}(sl(2|n),\theta)^{(\pm 1)}. $

Let $F_{-1}$ be  the simple lattice
 vertex superalgebra $V_{\Z \varphi}$ associated to the lattice $\Z \varphi$ where
$\langle \varphi, \varphi \rangle =-1$. As a vector space,
 $F_{-1} =   M_{\varphi} (1) \otimes {\mathbb C}[\Z \varphi] $ where $M_{\varphi} (1)$
 is the Heisenberg vertex algebra generated
 by the field $\varphi(z)= \sum_{n \in {\Z} } \varphi_{(n)} z ^{-n-1}$, and ${\mathbb C}[\Z \varphi] $
  is the group algebra with generator
 $ e^{\varphi}$. $F_{-1}$ admits the following $\Z$--gradation:
 $$F_{-1} = \bigoplus_{m \in \Z} F_{-1} ^{(m)}, \quad F_{-1} ^{(m)} := M_{\varphi} (1) \otimes e^{m \varphi}. $$

Consider now the simple vertex superalgebra $ {W}_{k}(sl(2|n),\theta) \otimes F_{-1}$. Set
$$ \overline{\varphi} = J ^{\{c\} } \otimes  {\bf 1}  +   {\bf 1}\otimes \varphi_{(-1)} {\bf 1}. $$
Note that $ \overline{\varphi}_{(0)}$ acts semisimply on $ {W}_{k}(sl(2|n),\theta) \otimes F_{-1}$.  We consider the following vertex subalgebra
\begin{equation}\label{degreezero}  ({W}_{k}(sl(2|n),\theta) \otimes F_{-1} ) ^{(0)} =\mbox{Ker} \ \overline{\varphi}_{(0)}.
\end{equation}
Explicitly,
$$
({W}_{k}(sl(2|n),\theta) \otimes F_{-1} ) ^{(0)}=\bigoplus_{i \in {\Z} } {W}_{k}(sl(2|n),\theta)^{(i)} \otimes F_{-1} ^ {(i)}.
$$
Define
$$ \overline{U} ^{\pm} = \mbox{span}_{\mathbb C} \{ G_i ^{\pm} \otimes e ^{\pm \varphi} \ \vert \ 1 \le i \le n \}, $$
$$ \overline{U} ^0 = \{J^{\{a\}}\otimes {\bf 1} \vert a \in sl(n) \}\oplus {\mathbb C} J ^{\{c \} } \otimes {\bf 1} \oplus {\mathbb C} {\bf 1}\otimes\varphi_{(-1)} {\bf 1}. $$
Clearly, $ \overline{U} ^0 $ generates the vertex subalgebra  of $ ({W}_{k}(sl(2|n),\theta) \otimes F_{-1} ) ^{(0)}$ which is  equal to  $ \mathcal{V}_k(sl(2|n)^\natural) \otimes  M_{\varphi} (1). $

\begin{proposition} \label{str}\ %

\begin{enumerate}
\item The vertex algebra $({W}_{k}(sl(2|n),\theta) \otimes F_{-1} ) ^{(0)}$ is simple.
\item\label{generateu} The vertex algebra $({W}_{k}(sl(2|n),\theta) \otimes F_{-1} ) ^{(0)}$ is generated by the vector space
 $\overline{U} ^- \oplus \overline{U} ^ 0  \oplus \overline{U} ^ + .$
\end{enumerate}
\end{proposition}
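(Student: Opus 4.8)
The plan is to treat the two assertions in turn; simplicity is the subtler one.

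\emph{Proof of (1).} As already observed, $W:=W_k(sl(2|n),\theta)\otimes F_{-1}$ is a simple vertex superalgebra, and by \eqref{degreezero} the algebra in question is $V^{(0)}:=\ker\overline\varphi_{(0)}$, the charge-zero component of the $\Z$-grading $W=\bigoplus_m W^{(m)}$ defined by $\overline\varphi_{(0)}$. The structural input I would isolate first is that $\overline\varphi$ is a \emph{nondegenerate} Heisenberg vector: using \eqref{JJ} with $k_0=k+\tfrac12 h^\vee=\tfrac12$, the value $(c|c)=2-\tfrac4{h^\vee}$, and $[\varphi_\lambda\varphi]=-\lambda$, one computes
\[
\overline\varphi_{(1)}\overline\varphi=\bigl(k_0(c|c)-1\bigr)\vac=\frac{2}{n-2}\,\vac\neq 0 .
\]
I would then exploit the lattice structure of the factor $F_{-1}$: the vectors $\vac\otimes e^{\pm\varphi}$ are simple currents, so $W$ is a simple-current extension of $V^{(0)}$, the components $W^{(m)}$ being simple $V^{(0)}$-modules permuted by the fields $Y(\vac\otimes e^{\pm\varphi},z)$. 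Given a nonzero ideal $I\triangleleft V^{(0)}$, these fields transport $I$ into every charged sector; the ideal of $W$ generated by $I$ is then $\overline\varphi_{(0)}$-graded with degree-zero part equal to $I$, and since $W$ is simple this ideal is all of $W$, forcing $I=V^{(0)}$.

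\emph{Proof of (2).} Let $\mathcal A\subseteq V^{(0)}$ be the subalgebra generated by $\overline U^-\oplus\overline U^0\oplus\overline U^+$. By the remark preceding the Proposition, $\mathcal A\supseteq\mathcal V_k(sl(2|n)^\natural)\otimes M_\varphi(1)$, and $G_i^{\pm}\otimes e^{\pm\varphi}\in\mathcal A$ by definition. Because $k=-\tfrac{h^\vee-1}{2}$ is a conformal level (Proposition \ref{withcenter}), $\omega=\omega'_{sug}\in\mathcal V_k(sl(2|n)^\natural)$, so $W_k(sl(2|n),\theta)$ is generated as a vertex algebra by $\{J^{\{a\}}\mid a\in\g^\natural\}\cup\{G_i^{\pm}\}$. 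The reduction I would use is: it suffices to show $w\otimes e^{q\varphi}\in\mathcal A$ for every $J^{\{c\}}_{(0)}$-eigenvector $w$ of charge $q$. Indeed, applying the modes of the Heisenberg vector $\vac\otimes\varphi_{(-1)}\vac\in\overline U^0$ to such a vector yields $w\otimes\psi$ for every $\psi\in M_\varphi(1)e^{q\varphi}=F_{-1}^{(q)}$, whence $\mathcal A\supseteq\bigoplus_q W_k(sl(2|n),\theta)^{(q)}\otimes F_{-1}^{(q)}=V^{(0)}$.

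To prove this, let $\mathcal B$ be the set of $J^{\{c\}}_{(0)}$-homogeneous $w$ with $w\otimes e^{(\text{charge }w)\varphi}\in\mathcal A$; it contains $\vac$, all $J^{\{a\}}$, and all $G_i^{\pm}$. I would show $\mathcal B$ is closed under every product $a_{(m)}b$, which then forces $\mathcal B$ to contain the whole vertex algebra generated by these elements, namely $W_k(sl(2|n),\theta)$. Fix homogeneous $a,b\in\mathcal B$ of charges $i,j$ and argue by downward induction on $m$ (note $a_{(m)}b=0$ once $\Delta_a+\Delta_b-m-1<0$). Using the tensor-product mode formula together with the lattice product $(e^{i\varphi})_{(p)}e^{j\varphi}$, whose top term occurs at $p=ij-1$ and equals $\epsilon\,e^{(i+j)\varphi}$ with $\epsilon\neq0$, the element
\[
\bigl(a\otimes e^{i\varphi}\bigr)_{(m+ij)}\bigl(b\otimes e^{j\varphi}\bigr)=\epsilon\,(a_{(m)}b)\otimes e^{(i+j)\varphi}+\sum_{p<ij-1}\pm\,(a_{(m+ij-1-p)}b)\otimes\bigl((e^{i\varphi})_{(p)}e^{j\varphi}\bigr)
\]
lies in $\mathcal A$. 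Each correction term has $W$-mode $m+ij-1-p>m$, so $a_{(m+ij-1-p)}b\in\mathcal B$ by the inductive hypothesis; since its lattice factor lies in $M_\varphi(1)e^{(i+j)\varphi}$ and $M_\varphi(1)\subseteq\mathcal A$, the whole correction term is in $\mathcal A$. Subtracting the corrections isolates $(a_{(m)}b)\otimes e^{(i+j)\varphi}\in\mathcal A$, i.e. $a_{(m)}b\in\mathcal B$, closing the induction.

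I expect the routine part to be the sign and index bookkeeping in the last display; the mechanism is robust, since the leading lattice term recovers the desired $W$-mode while all corrections fall to strictly smaller conformal weight in the $W$-factor. The genuine difficulty is in (1): justifying that the degree-zero part of the $W$-ideal generated by $I$ is exactly $I$. This is where the nondegeneracy of $\overline\varphi$ and the simple-current property of $e^{\pm\varphi}$ are essential, and I would model that step on the rank-one ($n=3$) case treated in \cite{A-2014}.
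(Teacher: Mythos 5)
Your proof is correct, and part (2) takes a genuinely different route from the paper's. For part (1) the paper simply invokes the general fact that the charge-zero component of a $\Z$-graded simple vertex algebra is simple; your argument — the fusion ideal $W\cdot I$ generated in $W=W_k(sl(2|n),\theta)\otimes F_{-1}$ by a nonzero ideal $I$ of $V^{(0)}=\ker\overline\varphi_{(0)}$ is $\overline\varphi_{(0)}$-graded with charge-zero part $V^{(0)}\cdot I=I$, so simplicity of $W$ forces $I=V^{(0)}$ — is exactly the standard proof of that fact, so here you supply the detail the paper omits. Two remarks on that step: the simple-current language and the asserted simplicity of the sectors $W^{(m)}$ as $V^{(0)}$-modules are never actually used in your mechanism (fortunately, since you do not prove them), and what the graded-ideal argument really needs is not nondegeneracy of $\overline\varphi$ but only that $\overline\varphi_{(0)}$ acts semisimply and, being a zero mode, is a derivation of all products, so that the ideal generated by a graded subspace is graded. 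For part (2) the paper argues globally: since $\omega=\omega_{sug}$, each component $W_k(sl(2|n),\theta)^{(\ell)}\otimes F_{-1}^{(\ell)}$ is spanned by monomials in the modes of the $G_i^{\pm}$ applied to $\mathcal V_k(sl(2|n)^\natural)\otimes F_{-1}^{(\ell)}$, and the dressing identity $G_i^{\pm}\otimes\vac=(G_i^{\pm}\otimes e^{\pm\varphi})_{(-2)}(\vac\otimes e^{\mp\varphi})$ together with associativity of the fusion product converts this into a spanning set built from $\overline U^{\pm}$ and $\overline U^{0}$. You argue locally, by downward induction on the mode $m$: the product $(a\otimes e^{i\varphi})_{(m+ij)}(b\otimes e^{j\varphi})$ has $(a_{(m)}b)\otimes e^{(i+j)\varphi}$ as its leading lattice term, and the corrections, which involve only higher modes $a_{(m')}b$ with $m'>m$, are absorbed using the inductive hypothesis and the Heisenberg modes $1\otimes\varphi_{(-s)}$. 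Both routes rest on the same inputs — generation of $W_k(sl(2|n),\theta)$ by the $J^{\{a\}}$ and $G_i^{\pm}$ alone (which is where the conformal-level identity $\omega=\omega_{sug}$ enters), the Heisenberg subalgebra $\vac\otimes M_\varphi(1)\subseteq\mathcal A$, and the lattice operator products. The paper's version is shorter but leans on a PBW-type spanning claim and on fusion-product associativity; your induction avoids both and is self-contained modulo the tensor-product mode formula, at the price of more index bookkeeping.
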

\begin{proof}
The proof is similar to that of Theorem 10.1. in \cite{A-2014}. 

It is a general fact that the degree zero component of a $\ganz$-graded simple vertex algebra is simple. The simplicity of $({W}_{k}(sl(2|n),\theta) \otimes F_{-1} ) ^{(0)}$ follows.

Let $S$ be the vertex subalgebra of $({W}_{k}(sl(2|n),\theta) \otimes F_{-1} ) ^{(0)}$  generated by the space $\overline{U} ^- \oplus \overline{U} ^ 0  \oplus \overline{U} ^ +$.   To prove \eqref{generateu}, we show that ${W}_{k}(sl(2|n),\theta)^ {(\ell)} \otimes F_{-1}  ^ {(\ell)}\subset S$ 
for every $\ell \in {\Z}$.
Let $\mathcal U^\pm= \mbox{span}_{\mathbb C} \{ G_i ^{\pm}  \vert \ 1 \le i \le n \}$. Note that, since $\omega=\omega_{sug}$,  ${W}_{k}(sl(2|n) ^ {(\ell)} \otimes F_{-1}  ^ {(\ell)} $ is linearly spanned by the vectors of the form
\bea
\label{general-form}
&& {G^{+ } _{i_1} }_{(n_1)} \cdots {G^{+} _{i_r} }_{(n_r)} {G^{ - }_{j_1} }_{ (m_1)} \cdots {G^{-}_{j_s}}_{   (m_s)}w \otimes w^ {(1)} ,  
\eea
where  $\ell =r-s$, 
 $w\in  \mathcal{V}(sl(2|n)^\natural)  $, $  w^ {(1)}  \in F_{-1} ^ {(\ell)} $,
 thus 
 ${W}_{k}(sl(2|n) )^ {(\ell)} \otimes F_{-1}  ^ {(\ell)} $ is contained in the linear span of 
 $$
\underbrace{ \mathcal U^+\cdot\mathcal U^+\cdots \mathcal U^+}_{\text{$r$ times}}\cdot\underbrace{\mathcal U^-\cdot\mathcal U^-\cdots\mathcal U^-}_{\text{$s$ times}}\cdot \mathcal V_k(sl(2|n)^\natural)\cdot F^{(\ell)}_{-1}.
 $$
 By using relations
$$ G_i ^{\pm} \otimes \vac = (G_i ^{\pm} \otimes e^{\pm \varphi} ) _{(-2)} (  {\bf 1} \otimes e^{\mp \varphi} ) \quad (1\le i \le n)$$
one sees that  $\mathcal U^\pm\subset \overline U^\pm\cdot F^{(\mp1)}_{-1}$ so  
 ${W}_{k}(sl(2|n),\theta)^ {(\ell)} \otimes F_{-1}^ {(\ell)} $ is contained in the linear span of 
\begin{align*}
&\underbrace{ \overline U^+\cdot\overline U^+\cdots \overline U^+}_{\text{$r$ times}}\cdot\underbrace{\overline U^-\cdot\overline U^-\cdots\overline U^-}_{\text{$s$ times}}\cdot \mathcal V_k(sl(2|n)^\natural)\cdot F^{(s-r)}_{-1}\cdot F^{(\ell)}_{-1}\\
&\subseteq\underbrace{ \overline U^+\cdot\overline U^+\cdots \overline U^+}_{\text{$r$ times}}\cdot\underbrace{\overline U^-\cdot\overline U^-\cdots\overline U^-}_{\text{$s$ times}}\cdot \mathcal V_k(sl(2|n)^\natural)\cdot F^{(0)}_{-1}
\end{align*}

The claim follows.
\end{proof}

\begin{lemma} \label{bracket} 
We have:
\begin{enumerate}
\item\label{i} $({G_i^+\otimes e^{+\varphi}})_{(r)}(G_j^+\otimes e^{+\varphi})= ({G_i^-\otimes e^{-\varphi}})_{(r)}(G_j^-\otimes e^{-\varphi})=0 $ for $r \ge 0$;
\vskip1pt
\item\label{ii}$({G_i^+\otimes e^{+\varphi}})_{(0)}({G_j^-\otimes e^{-\varphi}})=-J^{\{e_{2+j,2+i}\}}\otimes\vac$ if $i\ne j$;
\vskip1pt
\item\label{iii}$({G_i^+\otimes e^{+\varphi}})_{(0)}({G_i^-\otimes e^{-\varphi}})=-\frac{(n-2)(n+1)}{2n}J^{\{{c}\}}-\frac{n+1}{2}\vac\otimes\varphi_{(-1)}\vac-J^{\{a\}}$ with  $a=(e_{1,1}+e_{2+i,2+i})^\natural_1=(e_{2,2}+e_{2+i,2+i})^\natural_1 \in \g^\natural_1\simeq sl(n)$;
\vskip1pt
\item\label{iv}$({G_i^+\otimes e^{+\varphi}})_{(1)}({G_j^-\otimes e^{-\varphi}}) =-\delta_{i,j}\frac{n+1}{2}\vac\otimes\vac$;
\vskip1pt
\item\label{v} $({G_i^+\otimes e^{+\varphi}})_{(m)}({G_j^-\otimes e^{-\varphi}} )=0$ for $m\geq 2$.
\end{enumerate}
\end{lemma}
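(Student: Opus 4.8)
The plan is to compute each product directly from the tensor-product structure of $W_k(sl(2|n),\theta)\otimes F_{-1}$. For homogeneous $a,a'\in W_k(sl(2|n),\theta)$ and $b,b'\in F_{-1}$ the super tensor-product formula reads
\[
(a\otimes b)_{(r)}(a'\otimes b')=(-1)^{P(b)P(a')}\sum_{p+q=r-1}(a_{(p)}a')\otimes(b_{(q)}b').
\]
Since $\g_{-1/2}$ is purely odd each $G_i^{\pm}$ is odd, and since $\langle\varphi,\varphi\rangle=-1$ is odd each $e^{\pm\varphi}$ is odd; hence in every mixed product below the summands carry the Koszul sign $(-1)^{P(e^{\pm\varphi})P(G_j^{\mp})}=-1$, which is exactly the source of the overall minus signs in \eqref{ii}--\eqref{iv}. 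The whole lemma then reduces to combining (a) the range and values of the lattice products, (b) the structural formulas \eqref{prima}--\eqref{quarta} and \eqref{normalorderzero}, and (c) a conformal-weight truncation on the $W$-side.

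On the lattice side I would use $Y(e^{\alpha},z)e^{\beta}=\epsilon(\alpha,\beta)\,z^{\langle\alpha,\beta\rangle}\exp\Big(\sum_{m\ge1}\tfrac{\varphi_{(-m)}}{m}z^m\Big)e^{\alpha+\beta}$, normalizing the cocycle so that $\epsilon(\varphi,-\varphi)=1$. With $\langle\varphi,\varphi\rangle=-1$ the leading power is $z^{-1}$ for equal signs, so $(e^{\varphi})_{(q)}e^{\varphi}=0$ for $q\ge1$; for the mixed pairing $\langle\varphi,-\varphi\rangle=1$ forces $(e^{\varphi})_{(q)}e^{-\varphi}=0$ unless $q\le-2$, and the two leading values are $(e^{\varphi})_{(-2)}e^{-\varphi}=\vac$ and $(e^{\varphi})_{(-3)}e^{-\varphi}=\varphi_{(-1)}\vac$. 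On the $W$-side, since $G^{\{u\}}$ has conformal weight $3/2$, the state $(G_i^+)_{(p)}G_j^-$ has conformal weight $2-p$ and therefore vanishes for $p\ge3$.

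For \eqref{i} I combine the lattice bound $q\le0$ with \eqref{prima} (which kills $(G_i^{\pm})_{(p)}G_j^{\pm}$ for $p\ge0$) and with \eqref{primabis} together with \eqref{normalorderzero} (which kill the remaining $p=-1$ term $:G_i^{\pm}G_j^{\pm}:$); then $p+q\le-2<-1\le r-1$ for all $r\ge0$, so every product vanishes. For the mixed products, combining $q\le-2$ with $p\le2$ forces $p=m-1-q\ge m+1$, so the surviving indices are exactly $p\in\{1,2\}$ when $m=0$, $p=2$ when $m=1$, and none when $m\ge2$; this immediately gives \eqref{v}. Substituting $k=\tfrac{n-1}{2}$, $h^\vee=2-n$ (so $k+1=\tfrac{n+1}{2}$ and $k+\tfrac{h^\vee}{2}=\tfrac12$), formula \eqref{seconda} yields $(G_i^+)_{(2)}G_j^-=\delta_{i,j}\tfrac{n+1}{2}\vac$, whence \eqref{iv}. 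For $m=0$, $i\ne j$, only the $p=1$ term contributes (the $p=2$ term carries a factor $\delta_{i,j}$), and \eqref{terza} gives \eqref{ii}; for $m=0$, $i=j$, both $p=1$ and $p=2$ contribute, and \eqref{quarta} and \eqref{seconda}, paired respectively with $(e^{\varphi})_{(-2)}e^{-\varphi}=\vac$ and $(e^{\varphi})_{(-3)}e^{-\varphi}=\varphi_{(-1)}\vac$, assemble (with the Koszul sign) into \eqref{iii}.

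The only delicate point, and the step I expect to require care, is the sign and cocycle bookkeeping: the overall minus signs are the Koszul sign $(-1)^{P(e^{\pm\varphi})P(G_j^{\mp})}=-1$, while \eqref{iii} receives contributions from two distinct pairs $(p,q)$, so one must check that the chosen lattice normalization $\epsilon(\varphi,-\varphi)=1$ is compatible with the $2$-cocycle used on $F_{-1}$ throughout (exactly as in the analogous computation of \cite{A-2014}). Once the cocycle is fixed, all numerical coefficients follow mechanically from the substitutions $k=\tfrac{n-1}{2}$, $h^\vee=2-n$ above.
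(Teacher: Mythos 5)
Your proposal is correct and is essentially the paper's own proof: the paper expands the products via the same tensor-product formula (quoting \cite[(4.3.2)]{KacV}, with the Koszul minus sign already built in), and then concludes exactly as you do from \eqref{prima}, \eqref{primabis}, \eqref{normalorderzero}, Lemma \ref{L-str-1}, and the lattice products $(e^{\varphi})_{(-2)}e^{-\varphi}=\vac$, $(e^{\varphi})_{(-3)}e^{-\varphi}=\varphi_{(-1)}\vac$. The only cosmetic difference is that you truncate $(G_i^+)_{(p)}G_j^-$ for $p\ge 3$ by a conformal-weight argument, whereas the paper implicitly uses that the $\lambda$-bracket \eqref{GGsimplified} is quadratic in $\lambda$.
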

\begin{proof}Since, by \cite[(4.3.2)]{KacV}, we have 
$$
({G_i^+\otimes e^{\varphi}})_{(r)}(G^+_j\otimes e^{+\varphi})=-\sum_{m\ge r-1}{G_i^+}_{(m)}G_j^+\otimes {e^{\varphi}}_{(-m+r-1)}e^{\varphi},
$$
and, likewise,
$$
({G_i^-\otimes e^{-\varphi}})_{(r)}(G^-_j\otimes e^{-\varphi})=-\sum_{m\ge r-1}{G_i^-}_{(m)}G_j^-\otimes {e^{-\varphi}}_{(-m+r-1)}e^{-\varphi},
$$
 assertion \eqref{i} follows from relations \eqref{normalorderzero},  \eqref{prima}, and \eqref{primabis}.
 
Since
 $$
({G_i^+\otimes e^{\varphi}})_{(r)}(G^-_j\otimes e^{-\varphi})=-\sum_{m\ge r+1}{G_i^+}_{(m)}G_j^-\otimes {e^{\varphi}}_{(-m+r-1)}e^{-\varphi},
$$
formulas \eqref{ii}--\eqref{v} follow form  Lemma \ref{L-str-1} and  the fact that ${e^{\varphi}}_{(-2)}e^{-\varphi}=\vac$ while ${e^{\varphi}}_{(-3)}e^{-\varphi}=\varphi_{(-1)}\vac$.
\end{proof}
With notation as in Section \ref{uno}, we set $V^{k'}(sl(n+1))=V^{B}(sl(n+1))$ where $B=k'(\cdot,\cdot)$ and  $(\cdot,\cdot)$ is the  invariant form of $sl(n+1)$, normalized by the condition $(\a,\a)=2$ for any root $\a$.
We also let, if $k'+n+1\ne0$, $ V_{k'}(sl(n+1))$  be its unique simple quotient.
\begin{theorem}\label{sln+1} We have
$$({W}_{k}(sl(2|n),\theta) \otimes F_{-1} ) ^{(0)}  \cong V_ {-\frac{n+1}{2}} (sl(n+ 1)) \otimes M_{\overline{\varphi} } (1),$$
where 	
$ M_{\overline{\varphi} }(1)$ is the Heisenberg vertex algebra generated by
$ \overline{\varphi} $.
\end{theorem}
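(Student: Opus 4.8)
The plan is to realize $\widehat{sl(n+1)}$ at level $-\frac{n+1}{2}$ together with a commuting Heisenberg field inside $({W}_{k}(sl(2|n),\theta)\otimes F_{-1})^{(0)}$, to assemble these into a homomorphism from $V^{-\frac{n+1}{2}}(sl(n+1))\otimes M_{\overline\varphi}(1)$ via the universal property of universal enveloping vertex algebras, and finally to descend to the simple quotient. I use the $\ganz$-grading $sl(n+1)=\mathfrak n_-\oplus gl(n)\oplus\mathfrak n_+$ attached to the last coordinate, so that $\mathfrak n_+\simeq\C^n$ and $\mathfrak n_-\simeq(\C^n)^*$ are abelian and the Levi is $gl(n)=sl(n)\oplus\C H$, with $H$ the grading element. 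I define $\Psi$ on generators by
$$\Psi(e_i)=G_i^+\otimes e^{\varphi},\qquad \Psi(f_i)=G_i^-\otimes e^{-\varphi}\qquad(1\le i\le n),$$
sending the $sl(n)$ part of the Levi to the currents $J^{\{a\}}\otimes\vac$ through the Lie algebra isomorphism $\g^\natural_1\simeq sl(n)$, the grading element $H$ to the combination of $J^{\{c\}}\otimes\vac$ and $\vac\otimes\varphi_{(-1)}\vac$ that is orthogonal to $\overline\varphi$ (equivalently, to the Cartan element produced in Lemma \ref{bracket}\eqref{iii}), and $\overline\varphi\mapsto\overline\varphi$.

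First I would verify that $\Psi$ preserves the defining $\lambda$-brackets, using Lemmas \ref{L-str-1} and \ref{bracket} together with \eqref{JG} and \eqref{JJ}. The relations $[e_{i\,\lambda}e_j]=[f_{i\,\lambda}f_j]=0$, expressing that $\mathfrak n_\pm$ are abelian and isotropic, are exactly Lemma \ref{bracket}\eqref{i}; the mixed brackets are read off from Lemma \ref{bracket}\eqref{ii}--\eqref{v}, with the zeroth products landing in $sl(n)$ for $i\ne j$ and in the Cartan for $i=j$, and with the first product $-\delta_{i,j}\frac{n+1}{2}\vac$ being the level term $\lambda k'(e_i|f_j)\vac$ for $k'=-\frac{n+1}{2}$ and $(e_i|f_j)=\delta_{i,j}$; the action of the Levi on $\mathfrak n_\pm$ comes from \eqref{JG} and the $sl(n)$ relations from \eqref{JJ}. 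The delicate point is the normalization of the level: the invariant form of $sl(2|n)$ restricted to $\g^\natural_1$ is the supertrace form, i.e.\ the negative of the trace form of $sl(n)$, so the current level $k_1=k+\half(h^\vee-h^\vee_{0,1})=\frac{n+1}{2}$ becomes the affine level $-\frac{n+1}{2}$ once $sl(n)\subset sl(n+1)$ is normalized by $(\a|\a)=2$; the nontrivial check is that this matches the level $-\frac{n+1}{2}$ forced independently by the $e_i,f_j$ pairing of Lemma \ref{bracket}\eqref{iv}. These verifications produce a homomorphism of $V^{-\frac{n+1}{2}}(sl(n+1))$ into $({W}_{k}(sl(2|n),\theta)\otimes F_{-1})^{(0)}$.

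Next I would show that $\overline\varphi$ is central for the $sl(n+1)$-image and generates a nondegenerate Heisenberg, so that $\Psi$ extends to the tensor product. A direct computation of $n$-th products gives $\overline\varphi_{(m)}x=0$ for $m\ge0$ and $x$ any of the above currents: for $\Psi(e_i),\Psi(f_i)$ this uses $J^{\{c\}}_{(0)}G_i^\pm=\pm G_i^\pm$ and $\varphi_{(0)}e^{\pm\varphi}=\mp e^{\pm\varphi}$ (which cancel in $\overline\varphi_{(0)}$) together with $J^{\{c\}}_{(1)}G_i^\pm=0$ and $\varphi_{(m)}e^{\pm\varphi}=0$ for $m\ge1$; for the $sl(n)$-currents it is immediate since $c$ is central in $\g^\natural$ and $(c|a)=0$; and $\Psi(H)$ was chosen orthogonal to $\overline\varphi$. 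Moreover, from $J^{\{c\}}_{(1)}J^{\{c\}}=k_0(c|c)\vac$, $\varphi_{(1)}\varphi=-\vac$, and the values $k_0=\half$, $(c|c)=2-\frac{4}{h^\vee}=\frac{2n}{n-2}$ (with $h^\vee=2-n$), one gets $\overline\varphi_{(1)}\overline\varphi=\big(k_0(c|c)-1\big)\vac=\frac{2}{n-2}\vac\ne0$, so $\overline\varphi$ generates a genuine rank one Heisenberg $M_{\overline\varphi}(1)$. Since $e^{\pm\varphi}$ has conformal weight $-\half$ while $G_i^\pm$ has weight $\tfrac32$, all generators have weight $1$, whence $\Psi$ is a homomorphism of $\ganz$-graded vertex algebras from $V^{-\frac{n+1}{2}}(sl(n+1))\otimes M_{\overline\varphi}(1)$.

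Finally, by Proposition \ref{str}\eqref{generateu} the image of $\Psi$ is everything: it contains all $J^{\{a\}}\otimes\vac$ ($a\in sl(n)$), and since $\overline\varphi$ and $\Psi(H)$ span $\C J^{\{c\}}\oplus\C\varphi_{(-1)}\vac$ it also contains $J^{\{c\}}\otimes\vac$ and $\vac\otimes\varphi_{(-1)}\vac$ as well as the $G_i^\pm\otimes e^{\pm\varphi}$, so the image contains $\overline U^-\oplus\overline U^0\oplus\overline U^+$ and $\Psi$ is surjective. As the target is simple by Proposition \ref{str}(1) and $\Psi$ is graded with one-dimensional degree-zero component on both sides, $\ker\Psi$ is a maximal graded ideal; since $M_{\overline\varphi}(1)$ is simple, the unique maximal graded ideal of $V^{-\frac{n+1}{2}}(sl(n+1))\otimes M_{\overline\varphi}(1)$ is $\mathcal I\otimes M_{\overline\varphi}(1)$, with $\mathcal I$ the maximal graded ideal of $V^{-\frac{n+1}{2}}(sl(n+1))$, so $\Psi$ descends to the claimed isomorphism onto $V_{-\frac{n+1}{2}}(sl(n+1))\otimes M_{\overline\varphi}(1)$. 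I expect the main obstacle to be the bracket bookkeeping of the second and third paragraphs, namely keeping the structure constants and the supertrace sign consistent so that the affine level is uniformly $-\frac{n+1}{2}$ and confirming the centrality of $\overline\varphi$; once these hold, surjectivity and the simplicity statements of Proposition \ref{str} make the descent routine.
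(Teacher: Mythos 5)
Your proposal is correct and takes essentially the same approach as the paper: the identical assignment of generators ($e_{1,1+i}\mapsto G_i^+\otimes e^{\varphi}$, $e_{1+i,1}\mapsto G_i^-\otimes e^{-\varphi}$, the $sl(n)$ Levi part to the currents $J^{\{a\}}\otimes\vac$, and the grading element to the $\overline\varphi$-orthogonal combination of $J^{\{c\}}$ and $\varphi$, which is exactly the paper's vector $w$), the same bracket verifications via Lemmas \ref{L-str-1} and \ref{bracket} together with \eqref{JJ} and \eqref{JG}, including the supertrace sign giving level $-\tfrac{n+1}{2}$, and the same conclusion by surjectivity (Proposition \ref{str}) plus simplicity of the target. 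Your explicit checks that $\overline\varphi_{(1)}\overline\varphi\ne 0$ and that the maximal graded ideal of the tensor product is $\mathcal I\otimes M_{\overline\varphi}(1)$ only spell out details the paper leaves implicit.
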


\begin{proof}
Set 
\bea  \overline{U} ^{0 , 0} & =  & \{ v \in   \overline{U} ^{0 } \ \vert \ \overline{\varphi}_{(n)} v = 0  \quad \mbox{for} \ n \ge 0 \}\nonumber \\
& =  & \{ {J^{\{a\}}} \otimes {\bf 1} \vert a \in \g^\natural_1 \} \oplus {\mathbb C} w, \nonumber \eea
where
$ w =   J ^{\{c\} }\otimes   {\bf 1}  +   \frac{n}{n-2}  {\bf 1}  \otimes\varphi_{(-1)} {\bf 1}. $

Set $$U^{ext} = \overline{U} ^{ +} \oplus  \overline{U} ^{0,0 } \oplus \overline{U} ^{ -}.$$
Identify  $\g^\natural_1$ with $sl(n)$ by identifying $a\in sl(n)$ with $\begin{pmatrix}0&0\\0&a\end{pmatrix}$ in $sl(2|n)$. Embed $gl(n)$ in $sl(n+1)$ by mapping $a$ to $\iota(a)=\begin{pmatrix}  -tr(a) & 0 \\ 0 & a\end{pmatrix}$. Define  $\gamma: sl(n+1)\to U^{ext} $ by mapping:
$$
\iota(a) \mapsto J^{\{a\}}\otimes{\bf 1},\,\,a\in sl(n),\ \ \iota(I_n)\mapsto \tfrac{(n+1)(n-2)}{2} w,$$
$$e_{1\,1+i}\mapsto  G_i^+\otimes e^{\varphi},\quad e_{1+i \,1}\mapsto  G_i^-\otimes e^{-\varphi},\,1\leq i\leq n,$$
We claim    that, if  $a,  b\in sl(n+1)$, then 
\begin{equation}\label{xly}
[\gamma(a)_\lambda \gamma(b)]=\gamma([a,b])-\frac{n+1}{2}\lambda (a,b),\end{equation}
Indeed if $a,b\in sl(n)$, \eqref{xly} follows from \eqref{JJ} observing that $(a|b)=-(a,b)$. Next, assume $a\in sl(n),\, b= e_{1\,1+i}$. Then, by \eqref{JG}, we have 
$$[J^{\{a\}}\otimes {\bf 1}_{\lambda} G^+_i\otimes e^\varphi]=[{J^{\{a\}}}_{\lambda} G^+_i]\otimes e^\varphi=G^{\{[a,e_{2\,2+i}]\}}\otimes e^\varphi,
$$
which is  \eqref{xly}  in the present case, since $G^{\{[a,e_{2\,2+i}]\}}\otimes e^\varphi=\gamma([\iota(a),e_{1\,1+i}])$. The same argument proves \eqref{xly} when $a\in sl(n)$ and $b=e_{1+i\,1}$.
If $a=e_{1\,1+i},\, b= e_{1\,1+j}$ or $b=e_{1+j\,1}$, then \eqref{xly} follows from Lemma \ref{bracket}. 
If $a=I_n$ and $b\in sl(n)$ then 
$[\gamma(\iota(I_n))_\lambda \gamma(\iota(b))]=\tfrac{(n+1)(n-2)}{2} [w_\lambda (J^{\{b\}}\otimes\vac)]=\tfrac{(n+1)(n-2)}{2} [{J^{\{c\}}}_\lambda J^{\{b\}}]\otimes\vac=0$ as desired. 
If $a=I_n$ and $b=e_{1\,1+i}$ then 
\begin{align*}[\gamma(\iota(I_n))_\lambda \gamma(b)]&=\tfrac{(n+1)(n-2)}{2} [({J^{\{c\}}}_\lambda G^+_i)\otimes e^\varphi)]+\tfrac{(n+1)n}{2} [G^+_i\otimes [\varphi_\lambda e^\varphi)]\\
&=-(n+1) G^+_i\otimes e^\varphi=\gamma([I_n,e_{1\,1+i}]).
\end{align*}
The case with $a=I_n$ and $b=e_{1+i\,1}$ is treated similarly. Finally, if $a=b=I_n$, then
$$
[\gamma(\iota(I_n))_\lambda \gamma(\iota(I_n))]=(\tfrac{(n+1)(n-2)}{2} )^2([{(J^{\{c\}}}_\lambda J^{\{c\}})\otimes \vac]+(\tfrac{n}{n-2})^2 [\vac\otimes [\varphi_\lambda \varphi]).
$$
It follows that $\gamma(\iota(I_n))_{(0)} \gamma(\iota(I_n))=0$ and
$$
\gamma(\iota(I_n))_{(1)} \gamma(\iota(I_n))=-\tfrac{(n+1)^2}{2} n=-\tfrac{n+1}{2} (I_n,I_n),
$$
as desired. 
Having shown that \eqref{xly} holds, we furthemore observe that
\begin{align*}
\overline \varphi_{(1)}w=({J^{\{c\}}}_{(1)}J^{\{c\}})\otimes \vac+\tfrac{n}{n-2}\vac\otimes \varphi_{(1)}\varphi
=\half(c|c)-\tfrac{n}{n-2}=0.
\end{align*}
This, together with \eqref{degreezero}, shows that $[\overline{\varphi}_\lambda a]=0$ for all $a\in U^{ext}$.   It follows that there is a vertex algebra homomorphism 
$$ V^{-\frac{n+1}{2}} (sl(n+1)) \otimes  M_{\overline{\varphi} }(1) \rightarrow  ({W}_{k}(sl(2|n), \theta) \otimes F_{-1} ) ^{(0)} . $$

Proposition \ref{str} implies that this homomorphism is surjective. Since   $({W}_{k}(sl(2|n), \theta) \otimes F_{-1} ) ^{(0)} $ is a simple vertex algebra, we get that 
$$ V_{-\frac{n+1}{2}} (sl(n+1)) \otimes  M_{\overline{\varphi} }(1) \cong ({W}_{k}(sl(2|n), \theta) \otimes F_{-1} ) ^{(0)} .$$
\end{proof}

  \vskip5pt
  \footnotesize{
  \noindent{\bf D.A.}:  Department of Mathematics, University of Zagreb, Bijeni\v{c}ka 30, 10 000 Zagreb, Croatia;
{\tt adamovic@math.hr}
  
\noindent{\bf V.K.}: Department of Mathematics, MIT, 77
Mass. Ave, Cambridge, MA 02139;\newline
{\tt kac@math.mit.edu}

\noindent{\bf P.MF.}: Politecnico di Milano, Polo regionale di Como,
Via Valleggio 11, 22100 Como,
Italy; {\tt pierluigi.moseneder@polimi.it}

\noindent{\bf P.P.}: Dipartimento di Matematica, Sapienza Universit\`a di Roma, P.le A. Moro 2,
00185, Roma, Italy; {\tt papi@mat.uniroma1.it}

\noindent{\bf O.P.}:  Department of Mathematics, University of Zagreb, Bijeni\v{c}ka 30, 10 000 Zagreb, Croatia;
{\tt perse@math.hr}
}

\end{document}